\newtheorem{thm}{Theorem}[section]
\newtheorem{thmi}{Theorem}
\newtheorem{defi}[thm]{Definition}
\newtheorem{lemma}[thm]{Lemma}
\newtheorem{prop}[thm]{Proposition}
\newtheorem{coro}[thm]{Corollary}
\newtheorem{rmk}[thm]{Remark}
\newtheorem{conjecture}[thm]{Conjecture}
\newtheorem{conjecture*}{Conjecture}
\newtheorem{notation}[thm]{Notation}
\newcommand{\T}{\mathrm{T}}
\newcommand{\rB}{\mathrm{B}}
\newcommand{\rG}{\mathrm{G}}
\newcommand{\rP}{\mathrm{P}}
\newcommand{\rM}{\mathrm{M}}
\newcommand{\rS}{\mathrm{S}}
\newcommand{\rT}{\mathrm{T}}
\newcommand{\mr}[1]{\mathrm{#1}}
\newcommand{\BA}{{\mathbb{A}}}
\newcommand{\BC}{{\mathbb{C}}}
\newcommand{\BG}{{\mathbb{G}}}
\newcommand{\BI}{{\mathbb{I}}}
\newcommand{\BK}{{\mathbb{K}}}
\newcommand{\BQ}{{\mathbb{Q}}}
\newcommand{\BR}{{\mathbb{R}}}
\newcommand{\BS}{{\mathbb{S}}}
\newcommand{\BZ}{{\mathbb{Z}}}
\newcommand{\Fg}{{\mathfrak{g}}}
\newcommand{\CB}{{\mathcal B}}
\newcommand{\CW}{{\mathcal W}}
\newcommand{\one}{\mathds{1}}
\newcommand{\Hom}{\mathop{\rm Hom}\nolimits}
\newcommand{\Ext}{\mathop{\rm Ext}\nolimits}
\newcommand{\Sym}{\mathop{\rm Sym}\nolimits}
\newcommand{\Res}{\mathop{\rm Res}\nolimits}
\newcommand{\Gal}{\mathop{\rm Gal}\nolimits}
\newcommand{\CH}{\mathop{\rm CH}\nolimits}
\newcommand{\CHM}{\mathop{\rm CHM}\nolimits}
\newcommand{\DM}{\mathop{\rm DM}\nolimits}
\newcommand{\MHS}{\mathop{\rm MHS}\nolimits}
\newcommand{\SL}{\mathop{\rm SL} \nolimits}
\newcommand{\GL}{\mathop{\rm GL} \nolimits}
\newcommand{\U}{\mathop{\rm U} \nolimits}
\newcommand{\SU}{\mathop{\rm SU} \nolimits}
\newcommand{\GU}{\mathop{\rm GU} \nolimits}
\newcommand{\Ind}{\mathop{\rm Ind} \nolimits}
\newcommand{\IndUn}{\mathop{\rm IndUn} \nolimits}
\newcommand{\ind}{\mathop{\rm ind} \nolimits}
\newcommand{\der}{\mathop{\rm \tiny{der}} \nolimits}
\newcommand{\Gr}{\mathop{\rm Gr} \nolimits}
\newcommand{\cusp}{\mathop{\rm \tiny{cusp}} \nolimits}
\newcommand{\ord}{\mathop{\rm \tiny{ord}} \nolimits}
\newcommand{\G}{\mathop{{\rm G}} \nolimits}
\newcommand{\K}{\mathop{\tilde{\rm K}_{\infty}} \nolimits}
\newcommand{\Eis}{\mathop{\rm Eis} \nolimits}
\newcommand{\Img}{\mathop{\rm Im} \nolimits}
\newcommand{\dlt}{\mathop{\vert \delta \vert} \nolimits}
\newcommand{\dltv}{\mathop{\vert \delta_v \vert} \nolimits}
\newcommand{\loc}{\mathop{\rm loc} \nolimits}
\newcommand{\id}{\mathop{\rm \tiny{id}} \nolimits}
\newcommand{\Div}{\mathop{\rm Div} \nolimits}
\newcommand{\q}[1]{``#1''}
\newcommand\restr[2]{{
  \left.\kern-\nulldelimiterspace 
  #1 
  \right|_{#2} 
  }}
\title[Bloch-Beilinson for Hecke characters and Picard surfaces]{Bloch-Beilinson conjectures for Hecke characters and Eisenstein cohomology of Picard surfaces}  
\author{Jitendra Bajpai}
\address{Department of Mathematics, Christian-Albrechts University of Kiel, Heinrich-Hecht-Platz 6, 24118 Kiel, Germany}
\email{jitendra@math.uni-kiel.de}
\author{Mattia Cavicchi}
\address{Université Bourgogne Europe, CNRS, IMB UMR 5584, F-21000 Dijon, France}
\email{mattia.cavicchi@ube.fr}
\subjclass[2010]{Primary:11G40;14G35; Secondary:11F75;11F70;14D07}  
\keywords{Eisenstein cohomology, Hecke extension, mixed Hodge structures.}
\begin{document}
\date{\today}

\begin{abstract}
We consider certain families of Hecke characters $\phi$ over a quadratic imaginary field $F$. According to the Bloch-Beilinson conjectures, the order of vanishing of the $L$-function $L(\phi,s)$ at the central point $s=-1$ should equal the dimension of the space of extensions of the Tate motive $\BQ(1)$ by the motive associated with $\phi$.

In this article, we construct candidates for the corresponding extensions of Hodge structures, assuming that the sign of the functional equation of $L(\phi,s)$ is $-1$. This is accomplished through the cohomology of variations of Hodge structures over Picard modular surfaces associated with $F$ and Harder's theory of Eisenstein cohomology.

Furthermore, we demonstrate that these extensions are naturally realized within certain biextensions and outline a program to establish their non-triviality.
\end{abstract}

\maketitle

\tableofcontents


\section{Introduction}\label{intro} 
Let $E$ be an elliptic curve over $\BQ$ and denote by $r$ the rank of the finitely generated abelian group $E(\BQ)$. Suppose that the sign $\epsilon(E)$ of the functional equation of the (completed) $L$-function of $E$ is $-1$. One of the striking consequences of the celebrated Gross-Zagier formula~(\cite{GZ86}), in this case, is that if $L^\prime(E,1) \neq 0$, then $r \geq 1$. Since by the hypothesis on the sign, $L(E,s)$ vanishes at the central point $s=1$, this agrees with the Birch and Swinnerton-Dyer conjecture, which predicts more precisely that $r$ should be \emph{equal} to the order of vanishing of $L(E,s)$ at $s=1$.  

In this article, we are interested in certain conjectural analogues of the latter results, when $L(E,s)$ is replaced by the $L$-function $L(\phi,s)$ of an algebraic Hecke character $\phi$ of a quadratic imaginary field $F$, of odd weight $w$. A natural question arises: what kind of arithmetic information is encoded in the order of vanishing of $L(\phi,s)$ at the central point $s=\frac{w+1}{2}$? The integer $w$ is usually referred to as the motivic weight of the $L$-function.

One way to arrive at a reasonable prediction is by rephrasing the conjecture about elliptic curves in a form that lends itself to generalization. Given a rational point $P$ on an elliptic curve $E$ over $\BQ$, one can consider the open subvariety $U:=E \setminus \{ O, P \}$, where $O$ is the zero point. One looks at the localization exact sequence relating cohomology $H^{\bullet}_Z(E)$ of $E$ supported on $Z:=\{P, O \}$, cohomology with compact supports $H^{\bullet}_c(U)$ of $U$, and cohomology $H^{\bullet}(E)$. Since the divisor $P-O$ gives rise to a class in $H^2_Z(E)(1)$ that maps to 0 in $H^2(E)(1)$, one gets a morphism $\one \rightarrow H^2_Z(E)(1)$ landing in the image of the connecting homomorphism from $H^1_c(U)(1)$. By pullback, this yields an exact sequence
\[
0 \rightarrow H^1(E)(1) \rightarrow E_P \rightarrow \one \rightarrow 0\,.
\]
It turns out that the extension $E_P$ is non-trivial if and only if $P$ is of infinite order (for more details, see the discussion in Remark~\ref{AJ}). This holds both in the sense of extensions of Galois representations (when taking étale cohomology) and of Hodge structures (when taking singular cohomology). Thus, one can reformulate the results of Gross-Zagier as saying that if $\epsilon(E)=-1$ and $L^\prime(E,1) \neq 0$, then there exists a non-trivial extension of $\one$ by $H^1(E)(1)$ \emph{of geometric origin}, i.e. defined through a long exact sequence arising from the cohomology of an algebraic variety. 

As explained in Subsection~\ref{BBHecke_expl}, the Bloch-Beilinson conjectures imply a generalization of the previous statement to the case of an algebraic Hecke character $\phi$ of $F$. To state it, let $\alpha$ denote both the non-trivial element of the Galois group of $F$ and the morphism that it induces on the idèles $\BI_F$ of $F$. Denote by $\vert \cdot \vert_{\BI_F}$ the idelic norm on $\BI_F$, and for a Hecke character $\phi$ of $F$, define the Hecke character $\phi^{\perp}$ by putting
\[
\phi^{\perp}(x):=(\phi(\alpha(x)))^{-1}, \ \ \ x \in \BI_F\,.
\]
\begin{conjecture*}[Conjecture~\ref{BBHecke_Hodge}] \label{Conjecture}
Let $\phi$ be an algebraic Hecke character of $F$ of odd weight w, such that $\phi^{\perp}=\phi \vert \cdot  \vert_{\BI_F}^{w}$. If the sign $\epsilon(\phi)$ of the functional equation of $\phi$ is $-1$, then there exists a non-trivial extension
\[
E_{\phi} \in \Ext^1_{\MHS}\left(\one, H_{\phi}\left(\frac{w+1}{2}\right)\right)
\]
of geometric origin.
\end{conjecture*}
Here, $\MHS$ is a category of \emph{mixed Hodge structures with coefficients}, to be defined in Subsection~\ref{subsec_hodge}, and $H_{\phi}$ is the rank 1 pure Hodge structure associated with $\phi$. It is a sub-object of the Hodge structure on the cohomology of a suitable abelian variety over $F$. The property of being \emph{of geometric origin} will be made precise in Subsection~\ref{motives}; one can think of it as meaning \q{arising from the cohomology of an algebraic variety}. Note that under the hypothesis on $\phi^{\perp}$, there is indeed a well-defined \emph{sign} $\epsilon(\phi)$, and the assumption $\epsilon(\phi)=-1$ implies that the $L$-function of $\phi$ vanishes at the \emph{central point} $s=\frac{w+1}{2}$ (see Remark~\ref{selfadjoint}). 

The purpose of this paper is to initiate a program whose goal is to confirm the above conjecture for certain Hecke characters of weight $-3$. We prove the following result.
\begin{thmi}[Theorem~\ref{sourcext1}.(\ref{ext_mainthm})] \label{mainthm}
Let $k$ be a positive integer, and let $\phi$ be a Hecke character of $F$ of type $(k,-k-3)$ or $(-k-3,k)$, satisfying\footnote{Note the hypothesis on $\phi^{\perp}$ is equivalent to asking that the restriction $\phi_{\BQ}$ of $\phi$ to $\BI_{\BQ}$ verifies
\[
\phi_{\BQ}= \omega_{F \vert \BQ} \vert \cdot \vert^3_{\BI_{\BQ}}
\]
where $\omega_{F \vert \BQ}$ is the order two Dirichlet character associated to the extension $F \vert \BQ$ (see Remark~\ref{meaning_restrcond} for this equivalence). This is the formulation that appears in the statement of Theorem~\ref{sourcext1}. It is the one more adapted to our approach using Eisenstein cohomology.} 
\begin{equation}\label{hyp-phiperp}
\phi^{\perp}=\phi \vert \cdot  \vert_{\BI_F}^{-3} \,.
\end{equation}
If $\epsilon(\phi)=-1$, then there exists an extension
\[
E_{\phi} \in \Ext^1_{\MHS}(\one, H_{\phi}(-1))
\]
of geometric origin.
\end{thmi}

Work in progress of the two authors is aimed at showing that, at least when $L^\prime(\phi,-1) \neq 0$ (a \q{Gross-Zagier type} hypothesis), the extension provided by the above theorem is indeed nontrivial. In the
rest of this introduction, we will first explain the principle of construction of our extension and the evidence for its non-triviality, then give an outline of the next steps of our program. Finally, we will survey the relation with other related works in the literature.

\subsection{Ideas of construction and evidence for non-triviality}
The strategy for proving Theorem 1 is inspired by ideas of Harder~(\cite{Har93}) and makes use of his results in~\cite{Har87b}. Starting from the Hecke character $\phi$, one can construct a subspace $\partial H^2(\phi)$ in degree 2 \emph{boundary cohomology}, with coefficients in an appropriate local system $V_k$, of a \emph{Picard modular surface} $\rS_K$, that is, a non-projective Shimura variety, attached to a $\BQ$-algebraic group $\rG$ such that $\rG(\BR) \simeq \SU(2,1)$. Note that the boundary cohomology of such a variety is computed in terms of the boundary of a compactification, but is independent of the choice of the latter. Then, one would like to lift elements of $\partial H^2(\phi)$ to classes in the cohomology of $\rS_K$ itself. Through Langlands' theory of Eisenstein operators, one sees that the desired lifts can be provided by classes represented by Eisenstein series, well defined as soon as their constant term converges. For Hecke characters satisfying the hypotheses of our Theorem, this convergence is ensured precisely when  $L(\phi,-1)=0$. Hence, in this case, we find that $\partial H^2(\phi)$ is a subspace of \emph{Eisenstein cohomology}, i.e. of the image of the restriction map 
\[
r: H^{\bullet}(\rS_K,V_k) \rightarrow \partial H^{\bullet} (\rS_K,V_k)
\]
from $V_k$-valued cohomology of $\rS_K$ to its $V_k$-valued boundary cohomology.

If $\phi$ is of weight $-3$ and satisfies~\eqref{hyp-phiperp}, then a natural way to enforce the vanishing $L(\phi,-1)=0$ is to ask that $\epsilon(\phi)=-1$. Now, Rogawski's study of the automorphic spectrum of unitary groups in three variables~(\cite{Rog90, Rog92a, Rog92b}) shows that precisely when $\phi$ satisfies this sign hypothesis and~\eqref{hyp-phiperp}, a suitable subspace $H^2(\phi)_!$ can be constructed inside the \emph{interior cohomology} of Picard surfaces with coefficients in the local system $V_k$. Here, interior cohomology is defined as the kernel of the restriction map $r$, and the space $H^2(\phi)_!$ is related to a \emph{cuspidal} $\rG(\BA_f)$-representation induced from $\phi$. 

To put the above two pieces together, one exploits the short exact sequence linking interior cohomology, cohomology, and Eisenstein cohomology, and gets an extension 
\begin{equation} \label{barext_0}
0 \rightarrow H^2(\phi)_! \rightarrow \bar{E}_0 \rightarrow \partial H^2(\phi) \rightarrow 0
\end{equation}
which is in fact an extension of Hodge structures. Classical results allow us to describe the Hodge structure on the piece on the left, while the one on the piece on the right can be computed based on the analysis of the degeneration of variations of the Hodge structure to the boundary of \emph{Baily-Borel compactification} $\rS^*_K$ of $\rS_K$ carried out in~\cite{Anc17}. It turns out that up to a suitable twist, the above extension is nothing but an extension
\[
0 \rightarrow H_{\phi}(-1) \rightarrow E_0 \rightarrow \one^{\oplus \dim \partial H^2(\phi)} \rightarrow 0\,.
\]
Any choice of a class in $\partial H^2(\phi)$ produces then an extension $E_{\phi}$ of geometric origin
\begin{equation} \label{ext_phi}
0 \rightarrow H_{\phi}(-1) \rightarrow E_{\phi} \rightarrow \one \rightarrow 0
\end{equation}
as required to prove our Theorem. 

The above extension is constructed starting from modules under the Hecke algebra with repeating eigenvalues. This happens because of a striking consequence of Rogawski's work: the hypothesis $\epsilon(\phi)=-1$ implies the existence of a sub-Hecke module of interior cohomology of Picard surfaces, isomorphic at all finite places either to an induced representation from an Hecke character, or to a quotient of it. We see this feature of our construction as the main reason to hope that, at least for a careful choice of the class in $\partial H^2(\phi)$, such an extension may be non split. Note that by contrast, the mixed Hodge structure on the first cohomology of a modular curve is split, precisely because, in that case, interior and Eisenstein cohomology have different eigenvalues under the action of the Hecke algebra. 

\subsection{Our program}
With notation as in~\eqref{barext_0}, let $\Psi$ be a class in $\partial H^2(\phi)$, and denote by $E_{\phi}(\Psi)$ the corresponding extension $E_{\phi}$ as in~\eqref{ext_phi}. To confirm Conjecture~\ref{Conjecture}, we need to provide a criterion on $\Psi$ for the non-triviality of $E_{\phi}(\Psi)$, and to check that it is verified for a wise choice of $\Psi$.

As explained in Corollary~\ref{coroext_dual}, there exists a subspace $\partial H^1 (\theta \phi)$ of degree-1 boundary cohomology of the Picard surface $\rS_K$, such that choosing classes $\Psi^\prime$ in $\partial H^1 (\theta \phi)$ yields extensions $E_{\phi}(\Psi^\prime)$ that are \emph{dual} to the $E_{\phi}(\Psi)$'s in a precise sense. We aim to exploit this fact for establishing and verifying the desired criterion, through the following steps: 
\begin{enumerate}
\item the support $\Theta$ of the class $\Psi$, identified with a formal linear combination of cusps in the Baily-Borel compactification of the Picard surface $\rS_K$, provides an algebraic cycle $\mathfrak{E}_{\phi}(\Psi)$ inducing the extension $E_{\phi}(\Psi)$ through an appropriate Abel-Jacobi map; similarly, there exist algebraic cycles $\mathfrak{E}_{\phi}(\Psi^\prime)$ inducing the extensions $E_{\phi}(\Psi^\prime)$;  
\item there is a well-defined \emph{height pairing} which can be evaluated on the algebraic cycles $\mathfrak{E}_{\phi}(\Psi)$, $\mathfrak{E}_{\phi}(\Psi^\prime)$, whose non-vanishing implies that the extension $E_{\phi}(\Psi)$ is non-trivial; 
\item the above height pairing is a (finite) sum of local contributions for each place of $F$, and the contribution from the place at infinity is expressed as the \emph{biextension height} (\cite{Hai90}, \cite{BdJS23}) of a biextension $B_{\phi}(\Psi,\Psi^\prime)$ of Hodge structures having $E_{\phi}(\Psi)$ and $E_{\phi}(\Psi)^\prime$ as subquotients; 
\item the computation of the biextension height of $B_{\phi}(\Psi,\Psi^\prime)$ can be carried out explicitly and yields information about the non-vanishing of the global height pairing. 
\end{enumerate}

This program explains the presence of Section~\ref{biext} of the present paper, devoted to the construction of the biextensions $B_{\phi}(\Psi,\Psi^\prime)$. To make the computation of their biextension height accessible, one needs to provide explicit representatives of cohomology classes in $\partial H^2(\phi)$, by means of differential forms. The construction of such explicit representatives, using the language of Lie algebra cohomology, has recently been achieved in the work~\cite{BC24} by the two authors. The latter should hence be seen as providing one of the crucial ingredients required to complete the above program. Note that it is precisely in the computation of the biextension height that the term $L^\prime(\phi,-1)$ is expected to appear. Its non-vanishing should then be necessary (and sufficient) for the non-vanishing of the pairing itself, and hence for the non-vanishing of our extension class. This will be the subject of the forthcoming article~\cite{BC}.

\subsection{Related work and further future directions}
When $\phi$ is an algebraic Hecke character of odd weight $w$, the Bloch-Beilinson conjectures (\cite{Blo84}, \cite{Bei87}) are much more precise than their consequence studied in this paper, i.e., than Conjecture~\ref{Conjecture}. Namely, they say that the order of vanishing of $L(\phi,s)$ at the central point $s=\frac{w+1}{2}$ should be equal to the (conjecturally finite) rank of a group of algebraic cycles, and they predict the value of the first non-zero Taylor coefficient $L^*(\phi,\frac{w+1}{2})$ of $L(\phi,s)$ at the central point, up to an algebraic number. 

Note that $s=\frac{w+1}{2}$ is a \emph{critical} point, in the sense of Deligne~\cite{Del79}, and hence there exists a canonical \emph{period} associated to it as in \emph{op. cit.}. Moreover, Deligne predicted that when non-zero, the value of $L(\phi,s)$ at such a point should be an algebraic multiple of his period. Deligne's conjecture was later proven for algebraic Hecke characters of $CM$-fields by Blasius~\cite{Bla86}, and the recent preprint~\cite{Kuf24} proposes a proof for general number fields. 

When the order of vanishing of $L(\phi,s)$ at the central point is greater than zero, the algebraic cycles whose existence is predicted by Bloch and Beilinson should give rise to non-trivial extensions, both as Hodge structures and Galois representations, through suitable Abel-Jacobi maps. Moreover, the \q{transcendental part} of the value $L^*(\phi,\frac{w+1}{2})$ should be given by the product of Deligne's period and of the determinant of a \emph{height pairing}. In this paper, we propose candidates for the expected extensions; moreover, the program that we have outlined in the previous section of this introduction should make it possible to compute their (conjecturally non-zero) extension class through a \emph{biextension height}. Our further goal is, of course, to show that this biextension height is related to the aforementioned height pairing and that it bears the expected relationship with the value $L^*(\phi,\frac{w+1}{2})$.  

Galois representation-theoretic extensions have been constructed in~\cite{BC04}, under the hypothesis of \emph{odd} order of vanishing of $L(\phi,s)$ at the central point, for essentially the same kind of Hecke characters $\phi$ of $F$ that we consider here (the case of \emph{even} order of vanishing is studied in~\cite{Her19}). As here, one crucial input in \emph{loc. cit.} is represented by Rogawski's results, and the source of the Galois representations considered in these works is again the cohomology of Picard modular surfaces. On the other hand, to obtain the desired extensions, the authors use $p$-adic families of automorphic forms, without looking at the geometry of the boundary of the Baily-Borel compactification and at Eisenstein cohomology. Hence, their extensions are not of geometric origin in our sense. It would be interesting to study the Galois-theoretic counterpart of our constructions, via \'etale cohomology, and to put it in relation to their methods. 

\subsection{Structure of the article} In Section~\ref{se:preli} we gather all the preliminary facts that we need about the structure of the group $\rG$, its representation theory, its parabolic subgroups, and about its associated locally symmetric spaces, i.e., the Picard surfaces $\rS_K$, together with the variations of Hodge structures carried by them. 

In Section~\ref{Heckechar}, we set up our notations for algebraic Hecke characters. Moreover, we recall the basic facts about their $L$-functions and their associated motives that we need to formulate the Bloch-Beilinson conjectures and to explain how Conjecture~\ref{Conjecture} follows from them. 

In Section~\ref{se:nerve}, we recall the structure of the Baily-Borel and Borel-Serre compactifications of Picard modular surfaces. Moreover, we revisit in detail the computation of boundary cohomology of Picard surfaces and its Hodge structure~(\cite{Anc17}), adding the complements we need. We proceed then by recalling in detail the contents of Harder's paper~\cite{Har87b} on which we rely crucially: in Subsection~\ref{ss:autbound} we record the description of boundary cohomology, using $\bar{\rS}_K$, in terms of induced representations from Hecke characters, and in Section~\ref{se:eis} we explain the description of Eisenstein cohomology, where the $L$-functions of the aforementioned Hecke characters play an essential role. We conclude the section by constructing the first piece of our extensions, the space $\partial H^2(\phi)$. 

In Section~\ref{se:intcoh}, we explain how to use Rogawski's work to construct the second piece needed for our extensions, i.e., the space $H^2(\phi)_!$. The sought-for extensions are then constructed in Section~\ref{se:Heckext}. The final Section~\ref{biext} is devoted to showing how our extensions fit into suitable biextensions. These, as explained before, will provide the starting ground for our future work on the proof of Conjecture~\ref{Conjecture}.


\section{Preliminaries}\label{se:preli}
The first part of this section fixes notation about the ground field $F$ and the algebraic group $\rG$ that will be in force throughout the whole paper. In the following parts, we discuss the root system and Weyl group of $\rG$, we fix a parametrization of its irreducible representations, and we explain the structure of its parabolic subgroups. Finally, we introduce the main geometric objects of interest: the locally symmetric spaces attached to $\rG$, their structure of Shimura varieties, and the variations of Hodge structure associated to representations of $\rG$.

\subsection{Basic notations and conventions}\label{basic} 
We fix a quadratic imaginary extension $F$ of $\BQ$. The only non-trivial element of $\Gal(F \vert\BQ)$ will be denoted by $\alpha$. For $x \in F^{\times}$, its norm $x \cdot \alpha(x)$ will be denoted by $\vert x \vert^2$. 

The symbols $\BA_{\BK}$ and $\BI_{\BK}$ will denote respectively adèles and idèles over a number field $\BK$ (no subscript if $\BK = \BQ$). The quadratic character of $\BI_F$ associated by class field theory to the extension $F \vert \BQ$ will be denoted by $\omega_{F \vert \BQ}$.

We fix as well a 3-dimensional $F$-vector space $V$, and a $F$-valued, non degenerate hermitian form $J$ on $V$, such that $J \otimes_{F} \BC$ is of signature $(2,1)$. For an extension $\mathbb{K}$ of $\BQ$, we will denote by $V_{\mathbb{K}}$ the $\mathbb{K}$-vector space $V \otimes_{\BQ} \mathbb{K}$. 

There always exists an $F$-basis of $V$ with respect to which $J$ is represented by a matrix of the form 
\[
\left(
\begin{array}{ccc}
& & 1 \\
& \beta & \\
1 & &
\end{array}
\right)
\]
where $\beta$ is a strictly positive rational number. We call such a basis a \emph{parabolic basis}. On the other hand, we call \emph{standard basis} a basis of $V_{\mathbb{\BR}}$ in which $J \otimes_{F} \BC$ is represented by the matrix 
\[
\left(
\begin{array}{ccc}
1 & &  \\
& 1 & \\
 & & -1
\end{array}
\right)\,.
\]
A change-of-basis matrix between two such bases is given by 
\begin{equation} \label{parab_to_diag}
\gamma:=\left(
\begin{array}{ccc}
\frac{1}{\sqrt2} & & \frac{1}{\sqrt2} \\
& \sqrt{\beta} & \\
\frac{1}{\sqrt2} & & -\frac{1}{\sqrt2}
\end{array}
\right)\,.
\end{equation}

To the datum of $(V,J)$, we attach the algebraic group over $\BQ$
\[
\rG:=\GU(V,J)
\]
characterized by the property that for every $\BQ$-algebra $R$,
\[
\rG(R)= \{ g \in \GL_{F \otimes_{\BQ} R}(V \otimes_{\BQ} R) \vert \ \exists \ \nu(g) \in R^{\times}, J(g \cdot, g \cdot)=\nu(g)J(\cdot, \cdot) \}\,.
\]
The resulting character $\nu: \rG \rightarrow \BG_m$ is called the \emph{multiplier}. 

We will also need to work with the kernel of the multiplier map
\[
\rG^{\nu = 1} := \U(V,J)
\]
and with the derived subgroup of $\rG$,
\[
\rG^{\der} := \SU(V,J)\,.
\]

\subsection{Structure Theory of $\rG$}\label{sl2}
To describe the structure of $\rG$, recall that $V_F$ decomposes as 
\begin{equation}\label{vbchange}
V_F \simeq V^+ \oplus V^-
\end{equation}
where
\begin{align*}
& V^+:=\{ v \in V\otimes_{\BQ}F \ \vert \ (x \otimes 1)v=(1 \otimes x)v \ \forall x \in F \} \,,\\
& V^-:= \{ v \in V\otimes_{\BQ}F \ \vert \ (x \otimes 1)v=(1 \otimes \alpha(x))v \ \forall x \in F \}.
\end{align*}
The action of $\alpha$ on $V_F$, $v \otimes f \mapsto v \otimes \alpha(f)$, exchanges $V^+$ and $V^-$. The restrictions to $V$ of the projections on $V^+$ and on $V^-$ induce a $F$-linear and a $F$-antilinear isomorphism of $\BQ$-vector spaces
\begin{equation} \label{linear}
V \simeq V^+ , \quad V \simeq V^-\,.
\end{equation}

Now fix any $F$-basis of $V$ and denote by the same symbol $J$ the matrix representing $J$. Under the above identifications, by definition of $\rG$, the relation 
\begin{equation} \label{conj_action}
g_{\vert V^-} = \nu(g) J^{-1} {}^tg^{-1}_{\vert V^+} J
\end{equation}
holds for any $g \in \rG(F)$. This yields an isomorphism
\begin{equation} \label{isoGL3}
\rG_F \simeq \GL(V_F) \times \BG_{m,F}
\end{equation}
by sending $g \in \rG(F)$ to $(g_{\vert V^+},\nu(g))$. It restricts to an isomorphism 
\begin{equation} \label{isoSL3}
\rG^{\der}_F \simeq \SL(V_F)\,.
\end{equation}

Now fix a \emph{parabolic basis} of $V$ (see conventions). Consider the resulting embedding
\[
\rG^{} \hookrightarrow \Res_{F \vert \BQ} \GL_{3,F}
\]
and call $\CB$ and $\mathcal{T}$ the standard (upper triangular) Borel and the maximal torus of $\Res_{F \vert \BQ} \GL_{3,F}$. Then,
\[
\rB := \CB \cap \rG
\]
is a Borel of $\rG$, and
\[
\rT := \mathcal{T} \cap \rG
\]
is a maximal torus that we call \emph{standard}. It is isogenous to $\Res_{F \vert \BQ} \BG_{m,F} \times \Res_{F \vert \BQ} \BG_{m,F}$. Under the isomorphism
\[
\rG_F \simeq \GL_{3,F} \times \BG_{m,F}
\]
deduced from~\eqref{isoGL3}, we get an identification
\begin{equation} \label{maxtorus}
\rT_F(F) \simeq \left\{ 
( \left(
\begin{array}{ccc}
a & & \\
& a^{-1}b & \\
& & b^{-1} q
\end{array}
\right), f) \vert a,b,q,f \in F^{\times} \right\}\,.
\end{equation}
A maximal torus of $\rG^{\der}$ is then given by $\rT \cap \rG^{\der}$. We denote it by
\[
\rT^{\prime} := \rT \cap \rG^{\der}\,.
\] 
It is isomorphic to $\Res_{F \vert \BQ} \BG_{m,F}$, and we fix the isomorphism to be the one described on $\BQ$-points by\footnote{We adopt this convention to be coherent with~\cite[p.~573]{Har87b}.}
\begin{equation} \label{maxtorus_der}
\rT^{\prime} (\BQ) \simeq \left\{ 
\left(
\begin{array}{ccc}
\alpha(a) & & \\
& \alpha(a)^{-1} a & \\
& & a^{-1}
\end{array}
\right) \vert a \in F^{\times} \right\}\,.
\end{equation}
Inside $\rT^{\prime}$, a maximal $\BQ$-split torus $\rT^s$ of $\rG^{\der}$, isomorphic to $\BG_m$, is then described as 
\begin{equation} \label{maxtorus_Qsplit}
\rT^s(\BQ) \simeq \left\{ 
\left(
\begin{array}{ccc}
t & & \\
& 1 & \\
& & t^{-1}
\end{array}
\right) \ \vert \ t \in \BQ^{\times}
\right\}\,.
\end{equation}

\subsection{Root System}\label{roots}

We parametrize characters on the standard maximal torus $\rT_F$ of $\rG_{F}$ by vectors $(k_{1},k_{2},c,r)$ of integers such that
\[
k_{1} \geq k_{2} \geq 0, \
c = k_{1}+k_{2}\pmod 2, \
r = \frac{c+k_{1}+k_{2}}{2}\pmod 2\,.
\]
The parametrization is defined by associating $(k_{1},k_{2},c,r)$ with the character 
\begin{equation} \label{paramchar}
\left( \left(
\begin{array}{ccc}
a & & \\
& a^{-1}b & \\
& & b^{-1} q
\end{array}
\right), f\right) \mapsto a^{k_{1}-k_{2}}b^{k_{2}}q^{\frac{c-(k_{1}+k_{2})}{2}} \cdot f^{-\frac{1}{2}(r+\frac{3c-(k_{1}+k_{2})}{2})}\,.
\end{equation}
\begin{rmk} \rm
The reason for this choice of parametrization comes from Hodge theory, see Remark~\ref{hodge_paramchar}.
\end{rmk}
There is a isomorphism between the subgroup of characters $\lambda$ of $\rT_F$ of the form 
\begin{equation}\label{defcharbig}
(k_{1},k_{2},k_{1}+k_{2},(k_{1}+k_{2}))
\end{equation}
and the group of characters of the maximal torus $\rT_F^\prime$ of $\rG_F^{\der}$. The isomorphism is provided by the natural factorization of characters of the above shape through $\rT^\prime_F$. We still call the resulting character $\lambda$ and denote it by $(k_1,k_2)$. 

The root system $\Phi$ of the split semi-simple group $\rG^{\der}_{F}$ is of type $A_2$. Under the previous identifications, the system $\Delta$ of simple roots is the set $\{(1,-1), (1,2) \}$ and the set $\Phi^+$ of positive roots is the set $\{ (1,-1), (1,2), (2,1) \}$. 

The half-sum of positive roots (and its lift to $\rT_F$ defined via~\eqref{defcharbig}) will be denoted by 
\begin{equation} \label{halfsum}
\delta:= \frac{1}{2}\sum_{\alpha \in \Phi^+} \alpha =(2,1)
\end{equation}
in what follows.

\subsection{Irreducible Representations}\label{irreducible}
Irreducible representations of $\rG^{\der}_{F}$ are identified with irreducible representations of $\SL_{3,F}$, and are therefore parametrized by characters $(k_{1}, k_{2})$ which are \emph{dominant}, i.e. such that $k_{1} \geq k_{2} \geq 0$. Irreducible representations of $\rG_F$ are then parametrized by characters $(k_1,k_2,c,r)$ such that $(k_1,k_2)$ is dominant; the integers $c$ and $r$ then account for a suitable twist by a power of the determinant and a power of the multiplier. We denote by $V_{\lambda}$ the irreducible representation of $\rG_F$ of highest weight $\lambda$. 

The $k$-th symmetric power $\Sym^k V$ of the standard representation $V$ of $\rG_F^{\der}$ corresponds to $\lambda=(k,0)$. We denote by $V_k$ its lift\footnote{See Remark~\ref{univ_abvar_rep} for an explanation of this choice of lift.} to the irreducible representation $V_{(k,0,k,k)}$ of $\rG_F$. We denote by $V^{\vee}_k$ its dual. 
\begin{rmk} \label{conj_highestweight} \rm
The hermitian form $J$ provides a $F$-antilinear isomorphism 
\[
V_k \simeq V^{\vee}_k\,.
\] 
From this, and from relation~\eqref{conj_action}, one sees that $V^{\vee}_k$ corresponds to the irreducible representation $V_{(k,k,0,k)}$.
\end{rmk}

We will also need to parametrize representations of $\rT^\prime$. Consider the isomorphism $$\rT^\prime \simeq \Res_{F \vert \BQ} \BG_{m,F}$$ given by~\eqref{maxtorus_der}, and the natural embedding $$\rT^\prime \rightarrow \Res_{F \vert \BQ} \rT^\prime_F.$$ Fix a character $$\chi: \T^{\prime}_F \rightarrow \BG_{m,F}$$ whose restriction under the above embedding is the map given on $\BQ$-points by  
\[
\left(
\begin{array}{ccc}
\alpha(a) & & \\
& \alpha(a)^{-1} a & \\
& & a^{-1}
\end{array}
\right) \mapsto a^{\mu}\alpha(a)^{\nu}\,.
\]
We then denote by
\begin{equation} \label{I_mu,nu}
I_{\mu,\nu}
\end{equation}
the one-dimensional $F$-vector space on which $F^{\times}$ acts by multiplication via $\chi$.

\subsection{Standard $\mathbb{Q}$-Parabolic Subgroups}\label{standard}

The only $\BQ$-conjugacy class of parabolic subgroups of $\rG$ is the class of the Borel subgroup $\rB$. Indeed, on one hand, it is clear that $\rB$ is a $\BQ$-parabolic. On the other hand, any other parabolic has to stabilize a subspace $W$ inside $V$, hence its orthogonal $W^{\perp}$, and by dimension considerations we see that $W \cap W^{\perp}$ has to be a line. Hence, any $\BQ$-parabolic $\rP$ is the stabilizer of an isotropic line, and since there is a transitive action of $\rG(\BQ)$ on the set of isotropic lines, $\rP$ has to be conjugated to $\rB$.

The Levi component of the maximal parabolic $\rB$ is precisely the torus $\rT$. We denote by $\mathrm{U}$ the unipotent radical of $\rB$, so that we have an isomorphism
\[
\rB \simeq \rT \rtimes \mathrm{U}\,.
\]

\subsection{Kostant Representatives}\label{KostantRep}

We will need to use a classical result of Kostant. To state it, fix a split reductive group $\mathcal{G}$ over a field of characteristic zero, with root system $\mathfrak{r}$ and Weyl group $W$. Denote by $\mathfrak{r}^+$ the set of positive roots and fix moreover a parabolic subgroup $\mathcal{P}$ with its unipotent radical $\mathcal{U}$. Let $\mathfrak{u}$ be the Lie algebra of $\mathcal{U}$ and $\mathfrak{r}_{\mathcal{U}}$ the set of roots appearing inside $\mathfrak{u}$ (necessarily positive). For every $w \in W$, we define:
\begin{align}\label{subweyl}
&\mathfrak{r}^+(w):=\{\alpha \in \mathfrak{r}^+ \vert w^{-1} \alpha \notin \mathfrak{r}^+ \},\nonumber\\
& l(w):=\vert \mathfrak{r}^+(w) \vert,\nonumber\\
&W^{\prime}:= \{ w \in W \vert \mathfrak{r}^+(w) \subset \mathfrak{r}_{\mathcal{U}} \}\nonumber.
\end{align}   

Then, Kostant's theorem reads as follows:
\begin{thm}\cite[Thm.~3.2.3]{Vog81}\label{KostThm}
Let $\mathcal{V}_{\lambda}$  be the irreducible $\mathcal{G}$-representation of highest weight $\lambda$, and let $\delta$ be the half-sum of the positive roots of $\mathcal{G}$. Then, as $(\mathcal{P}/\mathcal{U})$-representations,
\[
H^q(\mathcal{U}, \mathcal{V}_{\lambda}) \simeq \bigoplus \limits_{w \in W^{\prime} \vert l(w)=q} \mathcal{V}^{\mathcal{P} / \mathcal{U}}_{w.(\lambda+\delta)-\delta}
\]
where $\mathcal{V}^{\mathcal{P} / \mathcal{U}}_{\mu}$ denotes the irreducible $(\mathcal{P}/\mathcal{U})$-representation of highest weight $\mu$. 
\end{thm}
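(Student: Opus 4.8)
The statement is the classical Kostant theorem on Lie algebra cohomology, and I would reproduce the shape of its standard proof. First I would replace $H^q(W,V_\lambda)$ by the Lie algebra cohomology $H^q(\mathfrak{w},V_\lambda)$, computed by the Chevalley--Eilenberg complex $C^\bullet = \Lambda^\bullet \mathfrak{w}^\vee \otimes V_\lambda$ with its usual differential $d$. Since the statement is insensitive to extension of scalars (both sides commute with base change along an extension of fields of characteristic zero), one may assume the ground field is $\mathbb{C}$. The Levi quotient $L := Q/W$, with Lie algebra the Levi subalgebra $\mathfrak{l}\subset\mathfrak{g}$, acts on $C^\bullet$ commuting with $d$, via the adjoint action on $\mathfrak{w}^\vee$ and the restriction of the $V_\lambda$-action to a Levi subgroup; hence each $H^q$ is naturally an $L$-representation, and the task is to identify it as such.

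The heart of the argument is Kostant's harmonic theory. Fixing a compact real form of $\mathcal{G}$ and an associated positive definite invariant Hermitian form on $\mathfrak{g}$ and on $V_\lambda$, one obtains an inner product on each $C^q$, an adjoint codifferential $\partial$ of $d$, and a Laplacian $\square = d\partial + \partial d$; a Hodge-theoretic argument then identifies $H^q(\mathfrak{w},V_\lambda)$ with $\ker(\square \mid C^q)$ as $L$-modules. Writing $d$ and $\partial$ out in a root basis, one proves the key identity
\[
2\,\square \;=\; \Omega_{\mathfrak{g}} \;-\; \Omega_{\mathfrak{l}},
\]
where $\Omega_{\mathfrak{g}}$ and $\Omega_{\mathfrak{l}}$ are the Casimir operators of $\mathfrak{g}$ and of the Levi, acting on $C^\bullet$. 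This computation — a careful bookkeeping of structure constants — is the main technical obstacle.

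One then decomposes $C^\bullet$ into $\mathfrak{l}$-isotypic pieces. On the piece of $\mathfrak{l}$-highest weight $\mu$ the operator $\Omega_{\mathfrak{l}}$ acts by the scalar $\langle\mu+\delta_{\mathfrak{l}},\mu+\delta_{\mathfrak{l}}\rangle - \langle\delta_{\mathfrak{l}},\delta_{\mathfrak{l}}\rangle$ ($\delta_{\mathfrak{l}}$ the half-sum of positive roots of the Levi), while $\Omega_{\mathfrak{g}}$ acts on all of $C^\bullet$ by the Casimir eigenvalue $\langle\lambda+\delta,\lambda+\delta\rangle - \langle\delta,\delta\rangle$ on $V_\lambda$; hence, using $\delta = \delta_{\mathfrak{l}} + \delta_{\mathfrak{w}}$ with $\delta_{\mathfrak{w}}$ the half-sum of the roots in $\mathfrak{r}_W$, the $\mu$-piece is harmonic precisely when $\langle\mu+\delta,\mu+\delta\rangle = \langle\lambda+\delta,\lambda+\delta\rangle$. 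Every weight occurring in $C^\bullet$ has the form $\mu = \lambda - \sum_{\alpha\in S}\alpha$ for a multiset $S$ of roots in $\mathfrak{r}_W$, so $\mu+\delta$ lies below $\lambda+\delta$ in the root order; this, the norm equality, and regularity of $\lambda+\delta$ force $\mu+\delta = w(\lambda+\delta)$ for a unique $w\in\Upsilon$, with $S$ equal to $\mathfrak{r}^+(w)$ — a genuine set, consisting of roots in $\mathfrak{r}_W$, so $w\in\Upsilon'$, uniquely determined by $\mu$, and of cardinality $l(w)$; thus $U_\mu$ occurs in degree exactly $l(w)$. Conversely, for each $w\in\Upsilon'$ the weight $\mu_w := w(\lambda+\delta)-\delta$ is $\mathfrak{l}$-dominant and occurs in $\Lambda^{l(w)}\mathfrak{w}^\vee\otimes V_\lambda$ as an $\mathfrak{l}$-highest weight with multiplicity one: it is produced by the extremal weight $w(\lambda)$ of $V_\lambda$ together with the line $\bigwedge_{\alpha\in\mathfrak{r}^+(w)}(\mathfrak{g}_{-\alpha})^\vee$, and any further contribution to its weight space would violate the norm identity. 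Assembling these facts gives the asserted decomposition.

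For the group relevant to this paper — $\mathfrak{r}$ of type $A_2$, $Q$ the minimal parabolic, $\dim\mathfrak{w}=3$, $\vert\Upsilon\vert=6$ — one could instead verify the theorem directly, enumerating the six Kostant representatives and the weight spaces of $\Lambda^\bullet\mathfrak{w}^\vee$; the conceptual proof above is the one that is needed only because it is uniform. A further alternative avoiding the Laplacian computation is to feed the Bernstein--Gelfand--Gelfand resolution of $V_\lambda$ by generalized Verma modules (induced from $Q$) into the $\mathfrak{w}$-cohomology functor, whose value on such modules is immediate; this trades the analytic input of the Laplacian identity for homological algebra, at the cost of first establishing the BGG resolution.
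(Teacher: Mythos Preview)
Your outline is a faithful sketch of the classical proof of Kostant's theorem (Laplacian identity, Casimir comparison, identification of harmonic weights with the $\Upsilon'$-orbit of $\lambda+\delta$), and the BGG alternative you mention is also standard. There is nothing to compare against, however: the paper does not prove this statement at all. It is quoted as a known result with a citation to \cite[Thm.~3.2.3]{Vog81}, and is then applied in the specific $A_2$ situation via the explicit list of $w\star\lambda$ that follows. So your proposal is correct but goes well beyond what the paper does; for the paper's purposes a reference suffices, and indeed (as you note yourself) in the $A_2$ case with the Borel one could verify everything by direct enumeration.
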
 

Coming back to our situation, the Weyl group $W$ of $\rG_{F}$ is isomorphic to the symmetric group on 3 elements. Fix a character $\lambda=(k_{1},k_{2},c,r)$ of $\rT_F$. Then, for an element $w \in W$, one computes that the \q{twisted} Weyl action 
\begin{equation} \label{KostWeilaction}
w \star \lambda:=w.(\lambda+\delta)-\delta
\end{equation}
appearing in Kostant's theorem is given as follows:
\begin{align*}
id \star{\lambda} & =\lambda \\
(1\ 2) \star \lambda &= (k_{2}-1,k_{1}+1,c,r) \\
(2\ 3)\star \lambda &= (k_{1}-k_{2}-1,-k_{2}-2,c-k_{2}-1,r) \\
(1\ 2\ 3)\star \lambda &= (-k_{2}-3,k_{1}-k_{2},c-k_{2}-1,r) \\
(1\ 3\ 2)\star \lambda &= (k_{2}-k_{1}-3,-k_{1}-3,c-k_{1}-2,r) \\
(1\ 3)\star \lambda &= (-k_{1}-4,k_{2}-k_{1}-2,c-k_{1}-2,r)
\end{align*}
The only element of length 0 is the identity, the elements $(1\ 2)$ and $(2\ 3)$ have length 1, the elements $(1\ 2\ 3)$ and $(1\ 3\ 2)$ have length 2, and the element $(1\ 3)$ has length 3. 

Notice that the restricted Weyl group of $\rG$ is isomorphic to $\mathbb{Z}_2$. We fix a generator of it, whose lift $\theta$ to $\rG$ is given by (in a parabolic basis)
\begin{equation} \label{longestWeyl}
\theta:=\left(
\begin{array}{ccc}
 & & -1 \\
 & -1 & \\
 -1 & &
\end{array}
\right)\,.
\end{equation}

\subsection{The associated symmetric spaces and Shimura varieties}\label{sideremarks} 
To describe the locally symmetric spaces associated with $\rG^{\der}$, observe that $\rG^{\der}(\BR) \simeq \SU(2,1)$. Fix a standard basis of $V_{\BR}$, consider the resulting embedding $\rG^{\der}(\BR) \hookrightarrow \GL_3(\BC)$ and write an element $g \in \rG(\BR)$ in block-matrix form as 
\[
\left(
\begin{array}{cc}
A & B \\
C & D
\end{array}
\right)
\]
where $A$ is a 2-by-2 matrix, $B$ and $C$ are respectively a 2-by-1 and a 1-by-2 matrix, and $D$ is a scalar. There is a standard maximal compact subgroup $K_{\infty}$ of $\rG^{\der}(\BR)$, given by the subgroup of elements
\begin{equation} \label{maxcomp}
\left\{
\left(
\begin{array}{cc}
A & \\
 & \det(A)^{-1}
\end{array}
\right)
\ \vert \ A \in \mathrm{U}_2(\BR)
\right\}
\end{equation}
where $\det A \in \mathrm{U}_1(\BR) \simeq S^1$. Then, $K_{\infty} \simeq \mathrm{U}_2(\BR)$, and each other maximal compact subgroup of $\rG^{\der}(\BR)$ is conjugated to $K_{\infty}$. 

The symmetric space associated to $\rG^{\der}$ is then 
\[
\rS := \rG^{\der}(\BR)/K_{\infty}\,.
\]
The Lie group $\rG^{\der}(\BR)$ acts transitively by generalized fractional transformations on the \q{matrix} space 
\[
\mathcal{D}_{2,1}:= 
\{ U \in M_{2,1}(\BC) \vert \bar{U}U-1 < 0 \}
\]
which is nothing but a complex 2-ball in $\BC^2$. The stabilizer of 0 being precisely $K_{\infty}$, the action defines a diffeomorphism between $\rS$ and $\mathcal{D}_{2,1}$. This yields a complex analytic structure on $\rS$. Then, to any arithmetic subgroup $\Gamma \subset \rG^{\der}(\BQ)$, one associates the locally symmetric space 
\[
\rS_\Gamma:= \Gamma \backslash \rS
\]
and the complex analytic structure on $\rS$ gives $\rS_\Gamma$ the structure of a complex analytic variety.

When $\Gamma$ is a congruence subgroup, the above locally symmetric spaces can be seen as (analytifications of connected components of) Shimura varieties. To see this, one fixes a parabolic basis of $V$, considers the \emph{Deligne torus} $\BS:= \Res_{\BC \vert \BR} \BG_{m,\BR}$ and defines the morphism $h: \BS \rightarrow \rG_{\BR}$ on $\BR$-points by
\begin{equation} \label{Shim_datum}
x+iy \mapsto \left(
\begin{array}{ccc}
x & 0 & iy \\
0 & x+iy & 0 \\
iy & 0 & x
\end{array}
\right) \,.
\end{equation}
It defines a \emph{pure Shimura datum} (~\cite[Sect.~3.2]{Lan17}\footnote{To make the comparison with Lan's setting, one has to notice that $i \cdot \big(\begin{smallmatrix}  & & 1\\ & 1 & \\ i & & \end{smallmatrix}\big) \cdot J_{\beta} \cdot \big(\begin{smallmatrix}  & & -i\\ & 1 & \\ 1 & & \end{smallmatrix}\big)=\big(\begin{smallmatrix}  & & 1\\ & i \beta & \\ -1 & & \end{smallmatrix}\big)$, where the latter matrix represents a \emph{skew-hermitian} form, and that conjugating the above Shimura datum by $\big(\begin{smallmatrix}  & & -i\\ & 1 & \\ 1 & & \end{smallmatrix}\big)$ gives the \emph{complex conjugate} of Lan's Shimura datum (\emph{loc. cit.}, Eq. (3.2.5.4)).}). Then, the $\rG(\BR)$-conjugacy class $X$ of $h$ is in canonical bijection with a disjoint union of copies of $\rS$ and therefore acquires a canonical topology. For any compact open subgroup $K \subset \rG(\BA_f)$ which is \emph{neat} 
in the sense of~\cite[Sect.~0.6]{Pin90}, the set of double classes
\begin{equation*}
\rG(\BQ) \backslash (X \times \rG(\BA_f) /K)
\end{equation*}
equipped with its natural topology, is canonically homeomorphic to a finite disjoint union of locally symmetric spaces
\begin{equation} \label{locsymspaces}
\bigsqcup_i \rS_{\Gamma_i}
\end{equation}
where the $\Gamma_i$'s are \emph{neat} (in particular torsion-free) \emph{congruence} subgroups of $\rG(\BQ)$, given by 
\[
\Gamma_i=g_i K g_i^{-1} \cap \rG(\BQ)
\]
for suitable elements $g_i \in \rG(\BA_f)$. As a consequence, the above set of double classes acquires a canonical structure of complex analytic space, and can be shown to be isomorphic to the analytification of a canonical \emph{smooth} and \emph{quasi-projective} algebraic surface $\rS_K$, called a \emph{Picard surface} of \emph{level} $K$, defined over $F$. It satisfies
\[
\rS_K(\BC) = \rG(\BQ) \backslash (X \times \rG(\BA_f) /K)\,.
\]
\begin{rmk} \label{univ_abvar} \rm
Each surface $\rS_K$ can be described as a moduli space of abelian threefolds equipped with additional structures depending on $K$, amongst which complex multiplication by the ring of integers of $F$. In particular, there exists a universal abelian threefold $A_K$ over $\rS_K$.
\end{rmk}

The locally symmetric spaces $\Gamma_i \backslash \rS$ are identified with the analytifications of \emph{smooth}, geometrically connected quasi-projective surfaces, defined over an abelian extension of $F$ depending on $K$. 

\subsection{Local systems on $\rS_K(\BC)$}
Fix one of the locally symmetric spaces $S_{\Gamma_i}$ arising as in~\eqref{locsymspaces}, and call it $\rS_{\Gamma}$. Its fundamental group is identified with $\Gamma$. Hence, any finite dimensional representation of $\rG$, by restriction to $\Gamma$, naturally defines a local system on $\rS_\Gamma$. For any irreducible representation $V_{\lambda}$ of $\rG$ of highest weight $\lambda$, we denote by the same symbol the resulting local system. One gets an isomorphism
\begin{equation}\label{eq:gpls}
H^\bullet(\Gamma, V_{\lambda}) \cong H^\bullet(\rS_\Gamma, V_{\lambda})\,.
\end{equation}
The above local systems provide local systems $V_{\lambda}$ on $\rS_K(\BC)$ in the obvious way.

\subsection{Hodge structures with coefficients and variations of Hodge structures on $\rS_K(\BC)$} \label{subsec_hodge}
For the notions and facts that we will use about pure and mixed Hodge structures and variations of Hodge structures, we refer the reader to~\cite{PS08}. Here, we mostly set up our notations. 

We will denote by $\MHS$ the category of mixed $\BQ$-Hodge structures. Moreover, we will need to work with a generalization of this category: for $\BK$ a finite extension of $\BQ$, the category $\MHS_{\BK}$ of mixed Hodge structures \emph{with coefficients in} $\BK$, or of mixed $\BK$-Hodge structures, is the pseudo-abelian completion of the category whose objects are the same as $\MHS$ and where the morphisms between objects $A,B$ are defined by $\Hom_{\MHS}(A,B) \otimes \BK$. Letting the number field $\BK$ vary, one obtains a category of Hodge structures with $\overline{\BQ}$-coefficients. There are obvious notions of weights and types for a $\overline{\BQ}$-Hodge structure. Notice that the types of a pure $\overline{\BQ}$-Hodge structure do not necessarily satisfy Hodge symmetry.

For any irreducible representation $V_{\lambda}$ of $\rG$ of highest weight $\lambda$, the morphism $h$ of~\eqref{Shim_datum}, defining the pure Shimura datum on $\rG$, induces a pure $F$-Hodge structure on $V_{\lambda}$. Conjugate $h$ via $g \in \rG(\BR)$ to obtain a morphism $h^g$ whose image lands in the real points $\rT(\BR)$ of the standard maximal torus of $\rG$. Then, by definition, the weight of the Hodge structure on $V_{\lambda}$ is the unique integer $w$ such that for any $t \in \BR$, 
\begin{equation} \label{weightrep}
\lambda \circ h^{g} (t) = t^{-w}\,.
\end{equation}
\begin{rmk} \label{hodge_paramchar} \rm
A computation shows that if $\lambda=(k_1,k_2,c,r)$, then 
\begin{equation} \label{r=w}
w=r\,.
\end{equation}
This property is the reason we chose to parametrize characters as in~\eqref{paramchar}. In particular, $V_k$ is endowed with a pure $F$-Hodge structure of weight $k$.
\end{rmk}

By the above considerations and by the theory of Shimura varieties, the local systems $V_{\lambda}$ of the previous paragraph are endowed with the structure of a variation of $F$-Hodge structure. 
\begin{rmk} \label{univ_abvar_rep} \rm
Remember that $V_k$ is a choice of lift to $\rG$ of the $\rG^{\der}$-representation $\Sym^k V$. Our choice is the one making the variation of Hodge structure $V_k$ isomorphic to the $k$-symmetric power of the relative $H^1$ of the universal abelian threefold over $\rS_K$ (Remark~\ref{univ_abvar}). 
\end{rmk}

There exists a sheafified version of mixed Hodge theory, including the theory of variations of Hodge structures as a special case. The sheaf-theoretic category that one considers is then called the category of \emph{mixed Hodge modules}. They are equipped with a \emph{six functor formalism} (in the sense studied in~\cite{DG22}; for example, one can pullback and pushforward mixed Hodge modules along arbitrary morphisms). We will only need to use this formalism as a black box. In particular, it implies that the cohomology groups $H^\bullet(\rS_K, V_{\lambda})$ are endowed with a canonical mixed $F$-Hodge structure.

\subsection{Cohomological dimension} \label{cohdim}
We can estimate the vanishing of the cohomology groups considered in the previous subsection. In fact, following~\cite{BoSe73}, we know that, since $\rG^{\der}$ is a semisimple group over $\BQ$, for any torsion-free arithmetic subgroup $\Gamma$ of $\rG^{\der}(\BQ)$ its cohomological dimension satisfies
$$\mr{cd} \Gamma= \dim \rG^{\der}(\BR)- \dim K_{\infty} -\mr{rank}_\BQ \rG^{\der},$$

In our case, we have $\dim \rG^{\der}(\BR)=8$, $\dim K_{\infty}= \dim \mathrm{U}(2)=4$ and $\mr{rank}_\BQ \rG^{\der}=1$, so that $\mr{cd} \Gamma = 3$. Therefore, for any irreducible representation $V_{\lambda}$ of $\rG$, we get $H^q( \rS_\Gamma, V_{\lambda})=0$ for all $q \geq 4$.

\section{Algebraic Hecke characters and their motives} \label{Heckechar}
In this section, we define algebraic Hecke characters, survey the properties of their $L$-functions, and recall basic facts about the associated motives. We then discuss the Bloch-Beilinson conjectures for Hecke characters, and explain how they imply the existence of non-trivial extensions of the unit Hodge structure by a twist of the Hodge structure associated to an algebraic Hecke character.

\subsection{Algebraic Hecke characters} \label{algHecke}
A Hecke character on a $\BQ$-torus $\mathcal{T}$ is a continuous homomorphism 
\[
\mathcal{T}(\BQ) \backslash \mathcal{T}(\BA) \rightarrow \BC^{\times}\,.
\]
We will often refer to a Hecke character on $\Res_{F \vert \BQ} \BG_{m, F}$ as a Hecke character of $F$, which in this case is simply a continuous homomorphism  
\[
F^{\times} \backslash \mathbb{I}_{F} \rightarrow \BC^{\times}.
\]
A Hecke character $\phi$ on $\mathcal{T}$ is \emph{algebraic} if there exists a character 
\[
\chi : \mathcal{T} \rightarrow \BG_m
\]
such that the component at infinity $\phi_{\infty}$ of $\phi$ is equal to the character on $\mathcal{T}(\BC)$ induced by $\chi^{-1}$. In this case, we say that $\phi$ is of \emph{type} $\chi$. When $\mathcal{T} = \Res_{F \vert \BQ} \BG_{m,F}$, $\phi$ being algebraic amounts to saying that there exist integers $(a,b)$ such that $\phi_{\infty}$ coincides with
\[
\begin{array}{ccc}
\BC^{\times} & \rightarrow & \BC^{\times} \\
z & \mapsto & z^{-a} \bar{z}^{-b} 
\end{array}
\]
In this case, we say that $(a,b)$ is the \emph{infinity type} of $\phi$, and $w(\phi):=a+b$ is its \emph{weight}. 
\begin{rmk} \rm
The notion of \emph{weight} can be defined for Hecke characters over any torus, in such a way that a Hecke character $\phi$ on a torus $\mathcal{T}$ is of weight $w(\phi)$ if and only if, denoting by $\bar{\phi}$ the Hecke character obtained by composing $\phi$ with complex conjugation in $\BC$,
\begin{equation} \label{phiconj_weight}
\phi \cdot \bar{\phi} = \vert \cdot \vert_{\BI_F}^{-w(\phi)}.
\end{equation}

 Now suppose that $\phi$ is a Hecke character of $F$ and denote by $\phi_{\BQ}$ its restriction to $\BI_{\BQ}$. From the above discussion and from the fact that the restriction to $\BR^{\times}$ of $\vert \cdot \vert_{\BI_F, \infty}$ is equal to the square of $\vert \cdot \vert_{\BI_\BQ, \infty}$, it is clear that 
\[
w(\phi_{\BQ})=2w(\phi).
\]
\end{rmk}

Note that the finite part $\phi_f$ of an algebraic Hecke character takes values in $\overline{\BQ}^{\times}$, see, e.g.~\cite[Sect.~2.5]{Har87a}. More precisely, it takes values in a number field $F^\prime$, called its \emph{field of values}.

Recall the maximal torus $\rT^\prime$ of $\rG^{\der}$, as introduced in Subsection~\ref{sl2}. We will mostly work with algebraic Hecke characters on $\T^\prime$. They will be identified with Hecke characters of $F$ through the isomorphism~\eqref{maxtorus_der}. As at the end of Subsection~\ref{irreducible}, consider the natural embedding $\rT^\prime \rightarrow \Res_{F \vert \BQ} \rT^\prime_F$. Fix a character $\chi : \T^{\prime}_F \rightarrow \BG_{m,F}$, and write it $\chi=(\kappa_1,\kappa_2)$ in the parameterization of Subsection~\ref{roots}. Then $\chi$ restricts, under the previous embedding, to the map given on $\BR$-points by 
\[
\left(
\begin{array}{ccc}
\bar{z} & & \\
& \bar{z}^{-1} z & \\
& & z^{-1}
\end{array}
\right) \mapsto z^{\kappa_2}\bar{z}^{\kappa_1-\kappa_2}\,.
\]
With these conventions, the weight of a Hecke character $\phi$ of $F$ of type $\chi=(\kappa_1,\kappa_2)$ is 
\[
w(\phi)=\kappa_1 
\]
and its infinity type is $(\kappa_2,\kappa_1-\kappa_2)$.

For later use, let us recall some basic properties of the $L$-function $L(\phi,s)$ of an algebraic Hecke character $\phi$ of $F$ (see, e.g.,~\cite[Ch.~0,~Sect.~6]{Scha88}). The region in which $L(\phi,s)$ is expressed as an absolutely convergent Euler product is the half-plane $\mbox{Re} \ s>1+\frac{w(\phi)}{2}$. In this region, the $L$-function is therefore holomorphic, and moreover known to be non-zero. Multiplying the Euler product by appropriate $\Gamma$-factors, we obtain a \emph{completed L-function} $\Lambda(\phi,s)$ which admits a meromorphic continuation to the whole of $\BC$. The function $\Lambda(\phi,s)$ and the meromorphically continued $L(\phi,s)$ are entire unless $\phi( \cdot ) = \vert \cdot \vert_{\BI_F}^t$ for some $t \in \BC$; in the latter case, $\Lambda(\phi,s)$ has first-order poles precisely in $s=-t$ and $s=1-t$, and $L(\phi,s)$ has a unique first-order pole in $s=1-t$. Finally, there exists a holomorphic function $\epsilon(s)$ such that $\Lambda(\phi,s)$ satisfies the functional equation
\begin{equation}\label{funceq}
\Lambda(\phi,s)=\epsilon(s)\Lambda(\phi^{-1},1-s)\,.
\end{equation}
Using~\eqref{phiconj_weight}, the above functional equation can be rewritten as
\begin{equation}\label{funceq_conj}
\Lambda(\phi,s)=\epsilon(s)\Lambda(\bar{\phi},w(\phi)+1-s)\,.
\end{equation}
\begin{rmk} \label{selfadjoint} \rm
For a Hecke character $\phi$ of $F$, define the Hecke character $\phi^{\perp}$ by putting
\begin{equation} \label{phi_perp}
\phi^{\perp}(x):=(\phi(\alpha(x)))^{-1}, \ \ \ x \in \BI_F\,.
\end{equation}
If $\phi^{\perp}=\phi \vert \cdot \vert^{w(\phi)}_{\BI_F}$, then 
\begin{enumerate}
    \item we have $\Lambda(\phi,s)=\Lambda(\bar{\phi},s)$, and the functional equation becomes
\[
\Lambda(\phi,s)=\epsilon(\phi)\Lambda(\phi,w(\phi)+1-s)\
\]
    \item $\epsilon(\frac{w+1}{2})$ is equal to $1$ or $-1$; we call this value the \emph{sign of the functional equation of} $\phi$ and denote it by $\epsilon(\phi)$.
    \end{enumerate}    
Now suppose that $w:=w(\phi)$ is an odd integer. If $\phi^{\perp}=\phi \vert \cdot \vert^{w}_{\BI_F}$ and $\epsilon(\phi)=-1$, then an analysis of the $\Gamma$-factors shows that $L(\phi,s)$ has to vanish at the \emph{central point} $s=\frac{w+1}{2}$. 
\end{rmk}
To an algebraic Hecke character of $F$ with field of values $F^\prime$, one associates a pure Hodge structure $H_{\phi}$ with coefficients in $F^\prime$, and a compatible system of $\ell$-adic Galois representations $(M_{\phi,\ell})_{\ell}$, for $\ell$ varying over the primes. The Hodge structure $H_{\phi}$ is of rank 1 over $F^\prime$. Its weight is equal to $w(\phi)$, and its Hodge type is the infinity type of $\phi$.

\subsection{Motives} \label{motives} In order to discuss motives attached to Hecke characters, let us quickly review the notions that we will use about motives. For more details, the reader can consult~\cite{And04} and~\cite{MNP13}. 

Given a smooth, projective variety $X$ over a field $E$, denote by $\CH^n(X)$ the Chow group of codimension $n$ algebraic cycles on $X$ modulo rational equivalence, with coefficients in $\BQ$. If $\dim_X$ is the dimension of $X$, then $\CH^{\dim_X}(X \times X)$ becomes a ring under a composition law defined using intersection of cycles. Its elements are called (classes of) degree 0 self-correspondences on $X$, where, more generally, a correspondence between smooth projective varieties $X$ and $Y$ is an algebraic cycle on $X \times Y$.

The category $\CHM(E)$ of \emph{Chow motives} with rational coefficients over $E$ has objects of the form $(X,p,m)$ where $X$ is a smooth projective variety over $E$, $p$ is an idempotent element of $\CH^{\dim_X}(X \times X)$, and $m$ is a integer. We often write $(X,p)$ for $(X,p,0)$. Morphisms are defined using rational equivalence classes of correspondences between smooth projective varieties, with maps $f:X \rightarrow Y$ between smooth projective varieties being interpreted as correspondences by identifying them with the \emph{transpose} of their graph $\Gamma_f$. The category $\CHM(E)$ is tensor and pseudo-abelian. Any Weil cohomology theory $H^{\bullet}( - )$  on the category $SmProj(E)$ of smooth projective varieties, towards graded vector spaces over a field $\mathbb{K}$ of characteristic zero, factors uniquely through a canonical functor 
\[
SmProj(E) \rightarrow \CHM(E)
\]
associating to a variety $X$ its Chow motive $M(X):=(X, \Gamma_{\id_X})$. If there exists an embedding $\sigma$ of $E$ into $\mathbb{C}$, then one can take $\mathbb{K}=\mathbb{Q}$ and choose $H^{\bullet}(-)$ as the functor associating to $X$ the singular cohomology of the analytification of $X$. Then, the resulting functor from $ \CHM(E)$ to graded $\BQ$-vector spaces is called \emph{Betti realization} and factors through the category of graded, pure $\mathbb{Q}$-Hodge structures. It sends $(X,p,m)$ to $pH^{\bullet}(X)(m)$. Here, $pH^{\bullet}(X)$ denotes the direct summand of $H^{\bullet}(X)$ obtained as the image of the idempotent correspondence $p$ acting on $H^{\bullet}(X)$, and $(-)(m)$ denotes the $m$th Tate twist in the Hodge-theoretical sense. 
If $E$ is of characteristic zero, then there exists a \emph{de Rham} realization with coefficients in $E$. 
In general, one can fix a prime $\ell$ different from the characteristic of $E$, take $\mathbb{K}= \BQ_{\ell}$ and choose $H^{\bullet}(-)$ as being $\ell$-adic étale cohomology. The resulting $\ell$-adic realization $M_{\ell}$ of a Chow motive $M$ is then endowed with the structure of a $\ell$-adic Galois representation, which allows one to define an $L$-function $L(M,s)$. 

To deal with more general varieties than smooth and projective ones, one enlarges $\CHM(E)$ to a symmetric monoidal, $\BQ$-linear triangulated category of \emph{constructible motives} over $E$, denoted $\DM_c(E)$, equipped with a fully faithful embedding
\[
\CHM(E) \hookrightarrow \DM_c(E)\,.
\]
The category $\DM_c(E)$ was first constructed by Voevodsky (who called it the category of geometrical motives and denoted it by $\DM_{gm}(E)$) starting from the complexes over the additive category of \emph{smooth} varieties equipped with \emph{finite correspondences}, with \emph{no} equivalence relation imposed on them. Any variety $X$ over $E$, even if non-projective or singular, has an associated motive $M(X)$ in $\DM_c(E)$, identified with its Chow motive when $X$ is smooth and projective. When $E$ embeds into $\BC$, the Betti realization extends to $\DM_c(E)$ and factors through a \emph{Hodge realization} functor $R_H$ towards the category $\MHS_{\BQ}$ of mixed $\BQ$-Hodge structures. 

For any field $\BK$, there are $\BK$-linear analogues $\CHM(E)_{\BK}$, $\DM_c(E)_{\BK}$, called \emph{with coefficients in} $\BK$, of the above categories, with a Hodge realization $R_H$ factoring through $\MHS_{\BK}$. An object $A$ of $\MHS_{\overline{\BQ}}$ is said to be of \emph{geometric origin} if $A$ belongs to the full Tannakian subcategory of $\MHS_{\overline{\BQ}}$ generated by the cohomology spaces of those objects which are in the image of $R_H$.

\subsection{Motives for Hecke characters}
By~\cite[Prop. 3.5]{DM91}, there exists a Chow motive $M_{\phi}$ with coefficients in a finite extension of $F^\prime$, whose realizations are the Hodge structure $H_{\phi}$ and the compatible system $(M_{\phi,\ell})_{\ell}$ (the construction runs as in~\cite[Ch.1, Thm. 4.1]{Scha88}, where one obtains an object in the weaker category of \emph{absolute Hodge motives}). Then, one has $L(\phi,s)=L(M_{\phi},s)$. To describe this motive, recall that by~\cite{DM91}, for every abelian variety $A$, there exists a canonical Chow motive $h^i(A)$ realising the $i$-th cohomology of $A$ in any Weil cohomology theory. Then, the motives $M_{\phi}$ have the following form: if the weight $w$ of $\phi$ is positive, there exist an abelian variety $A_{\phi}$ over $F$, with complex multiplication by a commutative semisimple algebra $T$ over $F$, and an idempotent element $e_{\phi}$ of $T^{\otimes w}$, such that 
\[
M_{\phi}=(h^1(A_{\phi})^{\otimes w},e_{\phi})\,.
\]

For any embedding $F \hookrightarrow \BC$, we can consider the singular cohomology spaces $H^{\bullet}(A_{\phi})$, with coefficients in a suitable number field, of the analytification of $A_{\phi}$ with respect to the chosen embedding. The space $H^1(A_{\phi})^{\otimes w}$ has a canonical pure Hodge structure of weight $w$, with coefficients in the same number field. The Hodge structure $H_{\phi}$ can then be described as
\begin{equation} \label{motivicH_phi}
H_{\phi} = e_{\phi} H^1(A_{\phi})^{\otimes w}\,.
\end{equation} 
By the Künneth theorem, $H^1(A_{\phi})^{\otimes w}$ is a direct factor of $H^w(A_{\phi}^w)$. The results of~\cite{DM91} imply that there exists an idempotent correspondence $e_{1,w}$ cutting out a direct factor of $h^w(A_{\phi}^w)$ realizing to $H^1(A_{\phi})^{\otimes w}$, so that we can also write
\[
M_{\phi}=(h^w(A^w_{\phi}), e_{\phi}e_{1,w})
\]
and the Hodge structure $H_{\phi}$ can also be described as
\[
H_{\phi} = e_{\phi} e_{1,w} H^w(A^w_{\phi})\,.
\]

If the weight of $\phi$ is negative, then the above descriptions hold for the \emph{duals} of $M_{\phi}$ and $H_{\phi}$.

\subsection{Bloch-Beilinson conjectures for Hecke characters} \label{BBHecke_expl}
Let $\phi$ be an algebraic Hecke character of $F$, of positive odd weight $w$. When $L(\phi,s)$ vanishes at the central point $s=\frac{w+1}{2}$, the Bloch-Beilinson conjectures predict the value, up to an algebraic number, of the first non-trivial Taylor coefficient of $L(\phi,s)$ at $s=\frac{w+1}{2}$. Let us formulate their prediction precisely (following~\cite{Nek94} and~\cite{Scho94}).

Consider the motive $M_{\phi}=(h^1(A_{\phi})^{\otimes w},e_{\phi})$ attached to $\phi$ as in the previous subsection. Recall that since the integer $\frac{w+1}{2}$ is critical for $M_{\phi}$, Deligne has used the comparison isomorphism between the Betti and de Rham realizations of the weight $-1$ motive $M_{\phi}(\frac{w+1}{2})$ to attach to it a canonical non-zero period $c^+(M_{\phi}) \in \BR$. He conjectured that if the value $L(M_{\phi},\frac{w+1}{2})$ does not vanish, then it should be equal to an algebraic multiple of $c^+(M_{\phi})$. He offered no statement when $L(\phi,s)$ has a zero at the central point. To express what is believed to happen in the latter case, consider, for a smooth, projective algebraic variety $X$ and for $n \in \{0, \dots, \dim X \}$, the subgroup $\CH^n(X)_0$ of the Chow group $\CH^n(X)$ formed by cycles homologically equivalent to 0. Bloch and Beilinson conjectured the existence of a canonical, non-degenerate \emph{height pairing}
\[
h^n_X: \CH^n(X)_0 \otimes \BQ \times \CH^{\dim X+1-n}(X)_0 \otimes \BQ \rightarrow \BR\,.
\]
Using it, they formulated a conjecture, whose statement we want to express in the special case of $L(\phi,s)$. 

To do so, we have to extend the conjecture to the Chow groups of our motive $M_{\phi}$. Recall that for $X,n$ as above, there is a natural action of $\CH^{\dim X}(X \times X)$ on $\CH^n(X)$, preserving $\CH^n(X)_0$. Denote by $\Gamma_*$ the action of an element $\Gamma \in \CH^{\dim X}(X \times X)$. For such a $\Gamma$, the pairing $h^n_X$ restricts to a pairing, also conjecturally non-degenerate
\[
h^n_{X,\Gamma}: \Gamma_* \CH^n(X)_0 \otimes \BQ \times \Gamma_* \CH^{\dim X+1-n}(X)_0 \otimes \BQ \rightarrow \BR\,.
\]
Then, writing $L^*(\phi,\frac{w+1}{2})$ for the first non-trivial coefficient of $L(\phi,s)$ at $s=\frac{w+1}{2}$, the conjecture reads as follows.

\begin{conjecture} \label{BBHecke}
Let $M_{\phi}=(h^w(A^w_{\phi}),e_{\phi}e_{1,w})$ be the Chow motive attached to an algebraic Hecke character $\phi$ of $F$ of positive odd weight $w$. Then 
\begin{enumerate}
\item $\ord_{s=\frac{w+1}{2}} L(\phi,s)= \dim_{\BQ} (e_{\phi} e_{1,w})_* \CH^{\frac{w+1}{2}}(A^{w}_{\phi})_0 \otimes \BQ$\,,
\item 
denoting by $\sim_{\overline{\BQ}}$ the relation of coinciding up to multiplication by a non-zero algebraic number,
\[
L^*\left(\phi,\frac{w+1}{2}\right) \sim_{\overline{\BQ}} c^+(M_{\phi}) \det\left(h^{\frac{w+1}{2}}_{A_{\phi}^w,e_{\phi}e_{1,w}}\right) \,.
\]
\end{enumerate}
\end{conjecture}
The conjecture extends in the obvious way to the case of negative weight $w$. 

Now, fix an element $Z \in \CH^{\frac{w+1}{2}}(A^{w}_{\phi})_0$. By a completely analogous construction to the one explained in the Introduction before stating Conjecture~\ref{Conjecture} (see e.g.~\cite[Par.~(4.2)]{Nek94}), such a cycle gives rise to an extension of the trivial Hodge structure by $H^w(A^w_{\phi})(\frac{w+1}{2})$. One gets an \emph{Abel-Jacobi map}
\[
\CH^{\frac{w+1}{2}} (A^w_{\phi})_0 \rightarrow \Ext^1_{\MHS_{\overline{\BQ}}}(\one, H^w(A^w_{\phi})(\frac{w+1}{2}))
\]
\begin{rmk}$\,$\rm \label{AJ}
The name \emph{Abel-Jacobi} comes from the fact that when considering mixed Hodge structures with \emph{integral} coefficients, the target of the above map, by Carlson's computation of extension spaces of mixed Hodge structures, can be identified with a complex torus (see e.g.~\cite[Thm.~3.31]{PS08}). If we replace $A^w_{\phi}$ by an elliptic curve $E$ over $\BQ$, the space $H^w(A^w_{\phi})(\frac{w+1}{2})$ by $H^1(E)(1)$, and the cycle $Z$ by the divisor $P - O$ with $P \in E(\BQ)$, then we find ourselves in the situation discussed in the Introduction before Conjecture~\ref{Conjecture}. In this case, the complex torus in question is identified with $E(\BC)$ itself, in such a way that the map $P \mapsto E_P$ can be expressed as $P \mapsto P - O$. More generally, if one replaces the elliptic curve by a higher genus smooth projective curve $C$, and $P - O$ by a degree 0 divisor on $C$, the complex torus is given by the complex points $J(C)(\BC)$ of the Jacobian $J(C)$ of $C$, and the map sending a divisor to the corresponding extension is induced by the classical Abel-Jacobi map
\[
\Div^0(C) \rightarrow J(C)(\BC)\,.
\]
\end{rmk}
Passing to the subspaces cut out by the idempotent $e_{\phi}e_{1,w}$, we get
\[
(e_{\phi}e_{1,w})_* \CH^{\frac{w+1}{2}} (A^w_{\phi})_0 \rightarrow \Ext^1_{\MHS_{\overline{\BQ}}}(\one, e_{\phi}e_{1,w}H^w(A^w_{\phi})(\frac{w+1}{2}))\,.
\]
Since $A_{\phi}^w$ is defined over a number field, the Abel-Jacobi map is conjectured to be \emph{injective}~(\cite[Sect.~5.6]{Bei87}). Hence, the same should hold for the map above, which, using the description~\eqref{motivicH_phi} of $H_{\phi}$, takes the form
\begin{equation} \label{AbJa}
(e_{\phi}e_{1,w})_* \CH^{\frac{w+1}{2}} (A^w_{\phi})_0 \rightarrow \Ext^1_{\MHS_{\overline{\BQ}}}(\one, H_{\phi}(\frac{w+1}{2}))\,.
\end{equation}
Now recall the character $\phi^{\perp}$ defined in~\eqref{phi_perp}, and suppose 
\[
\phi^{\perp}=\phi \vert \cdot \vert^w_{\BI_F}\,.
\]
Then, by Remark~\ref{selfadjoint}, there is a well-defined sign $\epsilon(\phi)$ of the functional equation of $\phi$, and if $\epsilon(\phi)=-1$ then $L(\phi,s)$ vanishes at the central point. Hence, the conjectural injectivity of the map~\eqref{AbJa} and point (1) of Conjecture~\ref{BBHecke} lead to the following (considerably weaker and less precise) conjecture. In its statement, we use the terminology \emph{of geometric origin} introduced at the end of Subsection~\ref{motives}. 

\begin{conjecture} \label{BBHecke_Hodge}
Let $\phi$ be an algebraic Hecke character of $F$ of odd weight w, such that $\phi^{\perp}=\phi \vert \cdot \vert^w_{\BI_F}$. If $\epsilon(\phi)=-1$, then there exists a non-trivial extension
\[
E_{\phi} \in \Ext^1_{\MHS_{\overline{\BQ}}}(\one, H_{\phi}(\frac{w+1}{2}))
\]
of geometric origin.
\end{conjecture}

\subsection{Motives of Picard surfaces}
The main aim of this paper is to construct, for a Hecke character $\phi$ verifying the required hypotheses, candidates for the extensions predicted by Conjecture~\ref{BBHecke_Hodge}. The construction will use the geometry of the Picard surfaces $\rS_K$ introduced in Subsection~\ref{sideremarks}. 

More precisely, consider the universal abelian variety $A_K$ over $\rS_K$ (Remark~\ref{univ_abvar}). For any integer $N$, we can consider its $N$-fold fibered power $A_K^N$ over $S_K$, and the motive $M(A^N)$ of its total space. It is an object of $\DM_c(F)$. 

Fix a highest weight $\lambda$ of $\rG_F$ and adopt the notations of Subsection~\ref{subsec_hodge}. By the main result of~\cite{Anc15} (see~\cite[before Rmk.~3.7]{Wil15} for a discussion in the case of Picard surfaces), there exist an integer $N$, and an idempotent endomorphism $e_{\lambda}$ of $M(A^N)$, such that the Hodge realization of the motive 
\begin{equation} \label{motive_lambda}
e_{\lambda} A_K^N
\end{equation}
coincides with $H^{\bullet}(\rS_K,V_{\lambda})$. This motive will be the source of the extensions \emph{of geometric origin} that we want to exhibit. 


\section{Boundary cohomology}\label{se:nerve}

In this section, we fix one of the Picard modular surfaces $\rS_K$ introduced in Section~\ref{sideremarks}, and let $\rS_{\Gamma}$ be a connected component of $\rS_K(\BC)$. We aim to describe the boundary cohomology of $\rS_{K}(\BC)$ with coefficients in local systems coming from representations of the underlying group $\rG$, together with their mixed Hodge structures and $\rG(\BA_f)$-actions. To do so, we will consider two different compactifications, the Baily-Borel compactification $\rS^*_{\Gamma}$ (a complex projective variety) and the Borel-Serre compactification $\overline{\rS}_{\Gamma}$ (a real analytic manifold with corners). 

\subsection{Boundary of the Baily-Borel compactification}
For facts recalled in this paragraph, we refer to~\cite[Sects.~3.6-3.7]{Pin92}. Any $\BQ$-conjugacy class of parabolics in $\rG$ provides a set of strata of the boundary of the Baily-Borel compactification of $\rS_{\Gamma}$. More precisely, given a representative $\rP$ of such a conjugacy class, there is a canonical reductive subgroup $\rM$ of the Levi component of $\rP$, and a canonical conjugacy class of morphisms
\begin{equation} \label{shimdatum_boundary}
h_{\rM} : \BS \rightarrow \rM_{\BR}
\end{equation}
providing a Shimura datum attached to $\rM$. Each of the strata of $\rS_{\Gamma}^*$ corresponding to $\rP$ is then a connected component of a Shimura variety associated with this Shimura datum. 

In our case, the only $\BQ$-conjugacy class of parabolics in $\rG$ is the one of the Borel $\rB$. The Levi component of $\rB$ is the torus $\rT$, and hence, the positive dimensional, reductive subgroup $\rM$ is a torus as well. Thus, the Baily-Borel boundary is a disjoint union of 0-dimensional strata, called \emph{cusps}.

\subsection{Boundary of the Borel-Serre compactification}\label{BSbound}
For facts recalled in this paragraph, we refer to~\cite[pp. 567-570]{Har87b}. The Borel-Serre compactification $\overline{\rS}_{\Gamma}$ of $\rS_{\Gamma}$ admits a canonical projection $\pi$ to the Baily-Borel one. Let us describe, in our situation, the fiber of $\pi$ over each cusp $p$ of $\rS^*_{\Gamma}$. To do so, let us write 
\[
\mathrm{U}^{\prime}:=\mathrm{U} \cap \rG^{\der}
\]
Denote by $A_{\rG}$ the identity component of the real points of the maximal $\BQ$-split subtorus $\rT^s$ of $\rG^{\der}$. From~\eqref{maxtorus_der}, we see that there is an isomorphism of Lie groups $\rT^\prime (\BR) \simeq \BC^{\times}$ which by~\eqref{maxtorus_Qsplit} restricts to $A_{\rG} \simeq \BR^{\times}_{>0}.$

Define $^0 T$ as the Lie group $A_{\rG} \backslash \rT^\prime (\BR)$. It is isomorphic to $S^1$. Denote by $K^{T}_{\infty}$ the intersection with $^0 T$ of a maximal compact subgroup of $\rG^{\der}(\BR)$. By conjugating the maximal compact subgroup described by~\eqref{maxcomp} via the change of basis matrix given in~\eqref{parab_to_diag}, one sees that $K^{T}_{\infty}$ coincides with $^0 T$ itself. On the other hand, denote $\Gamma_{\mathrm{U}}:= \Gamma \cap \mathrm{U}^\prime (\BR)$. The general theory tells us that the space $\pi^{-1}(p)$ has the structure of a fibration, with base a quotient of $^0 T \backslash K^T_{\infty}$ (hence a point in our case), and with fiber diffeomorphic to the nilmanifold
\[
\rS_{\mathrm{U}_{\Gamma}}:=\Gamma_{\mathrm{U}} \backslash \mathrm{U}^\prime (\BR)
\] 

This means that in our case, $\pi^{-1}(p) \simeq \rS_{\mathrm{U}_{\Gamma}}$. Now $\mathrm{U}^\prime$ is inserted into an exact sequence
\[
0 \rightarrow \mathrm{W} \rightarrow \mathrm{U}^\prime \rightarrow \widetilde{\mathrm{U}^\prime} \rightarrow 0
\]
where $\mathrm{W}$ is the commutator subgroup of $\mathrm{U}^\prime$, isomorphic to $\BG_a$, and $\widetilde{\mathrm{U}^\prime}$ is isomorphic to $\Res_{F \vert \BQ} \BG_{a,F}$. Since we are working with a torsion-free $\Gamma$, this yields a diffeomorphism of $\rS_{\mathrm{U}_{\Gamma}}$ with a $S^1$-fiber bundle over an elliptic curve.

\subsection{Boundary cohomology and its Hodge structure} \label{boundhodge}
Let $V_{\lambda}$ be the irreducible representation of $\rG$ of highest weight $\lambda$. Let $\widetilde{\rS_{\Gamma}}$ be a compactification of $\rS_{\Gamma}$ and let $j: \rS_{\Gamma} \hookrightarrow \widetilde{\rS_{\Gamma}}$ be the corresponding open immersion, with boundary $i: \partial \widetilde{\rS_{\Gamma}} \hookrightarrow \widetilde{\rS_{\Gamma}}$. Boundary cohomology of $\rS_{\Gamma}$ with coefficients in $V_{\lambda}$  is defined as hypercohomology of a complex of sheaves over $\partial \widetilde{\rS_{\Gamma}}$:
\[
\partial H^n(\rS_{\Gamma}, V_{\lambda}) := \mathbb{H}^n(\partial \widetilde{\rS_{\Gamma}}, i^* Rj_* V_{\lambda})
\]
where $Rj_*$ stands for the total derived direct image functor with respect to $j$. 

By proper base change, boundary cohomology does not depend on the choice of the compactification $\widetilde{\rS_{\Gamma}}$. There is a natural long exact sequence
\begin{equation} \label{longexloc}
\cdots \rightarrow H_c^n(\rS_{\Gamma}, V_{\lambda}) \rightarrow H^n(\rS_{\Gamma}, V_{\lambda}) \rightarrow \partial H^n(\rS_{\Gamma}, V_{\lambda}) \rightarrow \cdots
\end{equation}
and a canonical mixed Hodge structure on the spaces $\partial H^{\bullet}(\rS_{\Gamma}, V_{\lambda})$ making the above a long exact sequence of mixed Hodge structures. This can be seen, for example, by applying the formalism of mixed Hodge modules mentioned in Subsection~\ref{subsec_hodge}.

The following Proposition gives a complete description of the mixed Hodge structure on $\partial H^{\bullet}(\rS_{\Gamma}, V_{\lambda})$, using the spaces $I_{\mu,\nu}$ introduced in~\eqref{I_mu,nu}. It is equivalent to the main result of~\cite{Anc17}, but we have chosen to write it in detail using our parametrizations and notations and to provide an outline of its proof for the convenience of the reader. 

\begin{prop} \label{boundaryweights}
Let $\lambda=(k_1,k_2,c,r)$. Let $m$ be the number of cusps of $\rS_{\Gamma}$. Then, for any integer $n$, the mixed Hodge structures $\partial H^{n}(\rS_{\Gamma}, V_{\lambda})$ are semisimple, and described as follows:
\begin{enumerate}[wide, labelwidth=!, labelindent=0pt]
\item the space $\partial H^{n}(\rS_{\Gamma}, V_{\lambda})$ is trivial for $n$ outside the set $\{0, \dots, 3 \}$. 
\item For $n=0$, 
\[\partial H^{0}(\rS_{\Gamma}, V_{\lambda}) \cong I_{k_2,k_1-k_2}^{\oplus m}\] 
\[
\mbox{where} \ I_{k_2,k_1-k_2} \ \mbox{is endowed with a 1-dimensional pure $F$-Hodge structure of}
\]
\[
\mbox{\emph{weight}:} \ r-k_1 
\]
\[
\mbox{\emph{type}:} \ \left(-\frac{k_1+k_2+c-2r}{4}, -\frac{3k_1-k_2-c-2r}{4}\right)
\]
\item For $n=1$,
\[
\partial H^{1}(\rS_{\Gamma}, V_{\lambda}) \simeq I_{k_1+1,-k_1+k_2-2}^{\oplus m} \oplus I_{-k_2-2,k_1+1}^{\oplus m} 
\]
\[
\mbox{where} \ I_{k_1+1,-k_1+k_2-2} \ \mbox{is endowed with a 1-dimensional pure $F$-Hodge structure of}
\]
\[
\mbox{\emph{weight}:} \ (r+1)-k_2
\]
\[
\mbox{\emph{type}:} \ \left(-\frac{k_1+k_2+c-2r}{4}, -\frac{-k_1+3k_2-4-c-2r}{4}\right)
\]
\[
\mbox{and} \ I_{-k_2-2,k_1+1} \ \mbox{is endowed with a 1-dimensional pure $F$-Hodge structure of}
\]
\[
\mbox{\emph{weight}:} \ (r+1)-(k_1-k_2)
\]
\[
\mbox{\emph{type}:} \ \left(-\frac{k_1-3k_2-4+c-2r}{4}, -\frac{3k_1-k_2-c-2r}{4}\right)
\]
\item For $n=2$,
\[
\partial H^{2}(\rS_{\Gamma}, V_{\lambda}) \simeq I_{k_1-k_2,-k_1-3}^{\oplus m} \oplus I_{-k_1-3,k_2}^{\oplus m}
\]
\[
\mbox{where} \ I_{k_1-k_2,-k_1-3} \ \mbox{is endowed with a 1-dimensional pure $F$-Hodge structure of}
\]
\[
\mbox{\emph{weight}:} \ (r+2)+k_2+1
\]
\[
\mbox{\emph{type}:} \ \left(-\frac{k_1-3k_2-4+c-2r}{4}, -\frac{-k_1-k_2-8-c-2r}{4}\right)
\]
\[
\mbox{and} \ I_{-k_1-3,k_2} \ \mbox{is endowed with a 1-dimensional pure $F$-Hodge structure of}
\]
\[
\mbox{\emph{weight}:} \ (r+2)+(k_1-k_2)+1
\]
\[
\mbox{\emph{type}:} \ \left(-\frac{-3k_1+k_2-8+c-2r}{4}, -\frac{-k_1+3k_2-4-c-2r}{4}\right)
\]

\item 
For $n=3$
\[
\partial H^{3}(\rS_{\Gamma}, V_{\lambda}) \simeq I_{-k_1-4,-k_2-2}^{\oplus m}
\]
\[
\mbox{where} \ I_{-k_1-4,-k_2-2} \ \mbox{is endowed with a 1-dimensional pure $F$-Hodge structure of}
\]
\[
\mbox{\emph{weight}:} \ (r+3)+k_1+1 
\]
\[
\mbox{\emph{type}:} \ \left(-\frac{-3k_1+k_2-8+c-2r}{4}, -\frac{-k_1-k_2-8-c-2r}{4}\right)
\]
\end{enumerate}
\end{prop}
\begin{proof}
The assertion of point (1) follows from the vanishing discussed in Subsection~\ref{cohdim} and from the long exact sequence~\eqref{longexloc}. 

To compute boundary cohomology in degree $n \in \{0, \dots, 3 \}$, we choose to use the Baily-Borel compactification $\rS^*_{\Gamma}$. Then, since $\partial \rS^*_{\Gamma}$ is 0-dimensional, 
\[
\partial H^n(\rS_{\Gamma}, V_{\lambda}) \simeq i^* R^n j_* V_{\lambda}
\]
This is also an isomorphism of mixed Hodge structures, once we consider the mixed Hodge structure on $i^* R^n j_* V_{\lambda}$ given by the theory of mixed Hodge modules. Now, for any cusp $c \in \partial \rS^*_{\Gamma}$, write $i_c$ for the composition of $i$ with the closed immersion of $c$ into $\partial \rS^*_{\Gamma}$. We then get 
\[
\partial H^n(\rS_{\Gamma}, V_{\lambda}) \simeq \bigoplus\limits_{c \in \partial \rS^*_{\Gamma}} i_c^* R^n j_* V_{\lambda}
\]

Now consider the subgroup $\rM$ of $\rT$, and its associated Shimura datum $h_\rM$, appearing in~\eqref{shimdatum_boundary}. The general formalism of~\cite[Ch.~4]{Pin90} has been spelled out in this special case in~\cite[Lem.~3.6, Lem.~3.8]{Anc17}. The outcome is that the map $h_M$ is defined on $\BC$-points by 
\begin{equation} \label{shimdatum_boundarySU(2,1)}
(z_1,z_2) \mapsto ( \left(
\begin{array}{ccc}
z_1z_2 & 0 & 0 \\
0 & z_1 & 0 \\
0 & 0 & 1
\end{array}
\right), z_1z_2)\,
\end{equation}
Through $h_\rM$, any representation of $\rM_{\BR}$ acquires a semisimple Hodge structure. 

By~\cite[Thm.~2.9]{BW04}, there is an isomorphism of mixed Hodge structures
\[
i_c^* R^n j_* V_{\lambda} \simeq \bigoplus\limits_{p+q=n} H^p(\overline{H}_C,H^q( \mathrm{U}_F, V_{\lambda}))
\]
where $\overline{H}_C$ is a certain neat arithmetic subgroup of $\rT / \rM$, and the Hodge structure on the spaces $H^p(\overline{H}_C,H^q( \mathrm{U}_F, V_{\lambda}))$ is the one obtained through~\eqref{shimdatum_boundarySU(2,1)} by considering them as $\rM$-representations, via restriction of the natural $\rT$-representation. 

Since $\rT$ is isogenous to $\Res_{F \vert \BQ} \BG_{m,F} \times \Res_{F \vert \BQ} \BG_{m,F}$, any neat arithmetic subgroup of it is trivial, hence the same holds for neat arithmetic subgroups of its quotients. So, the above formula becomes 
\[
i_c^* R^n j_* V_{\lambda} \simeq H^n( \mathrm{U}_F, V_{\lambda})
\]
In particular, notice that the Hodge structure on $i_c^* R^n j_* V_{\lambda}$ is independent of the cusp $c$, so that if $m$ is the number of cusps, then as Hodge structures
\[
\partial H^n(\rS_{\Gamma}, V_{\lambda}) \simeq H^n(\mathrm{U}_F, V_{\lambda})^{\oplus m}
\]

By applying Kostant's theorem~\ref{KostThm}, we see that
\[
H^n(\mathrm{U}_F, V_{\lambda}) \simeq \bigoplus \limits_{w \in W^{\prime} \vert l(w)=n} V^{\rT_F}_{w.(\lambda+\delta)-\delta},
\]
where $V^{\rT_F}_{w.(\lambda+\delta)-\delta}$ is the one-dimensional representation on which the split torus $\rT_F$ acts via the character $w.(\lambda+\delta)-\delta$. Denoting the latter character by $\chi$, and expressing it in our parametrization by 
\[
\chi = (\kappa_1, \kappa_2, \gamma, \rho)
\]
a computation shows that through $\chi \circ h_\rM$, the element $(z_1,z_2) \in \BS(\BC)$ acts on $V^{\rT_F}_{\chi}$ via multiplication by
\begin{equation} \label{HT}
z_1^{\frac{\kappa_1+\kappa_2}{4}+\frac{1}{4}\gamma-\frac{1}{2}\rho} z_2^{\frac{3\kappa_1-\kappa_2}{4}-\frac{1}{4}\gamma-\frac{1}{2}\rho}\,.
\end{equation}
So, the \emph{negatives} of the exponents of $z_1$ and $z_2$ appearing above give us the type of the one-dimensional $F$-Hodge structure carried by $V^{\rT_F}_{\chi}$. 

On the other hand, using the isomorphism~\eqref{maxtorus_der}, an element $a \in F^{\times}$ acts on $V^{\rT_F}_{\chi}$ via multiplication by
\[
a^{\kappa_2} \alpha(a)^{\kappa_1-\kappa_2}
\]
so that we get an identification, as $\rT^\prime$-representations,
\[
V^{\rT_F}_{\chi} = I_{\kappa_2,\kappa_1-\kappa_2}
\]

By plugging in the explicit formulae, provided in Subsection~\ref{KostantRep}, for the different $\chi$'s appearing in Kostant's theorem, assertions (2)-(5) of the statement are then proven. 
\end{proof}

\begin{rmk}$\,$\rm
\begin{enumerate}
\item The assertions in Proposition~\ref{boundaryweights} concerning the weights of the Hodge structures of interest can also be found, with our same parametrizations, in~\cite[proof of Thm.~3.8]{Wil15}. In our way of writing the weights, we have imitated \emph{loc. cit.} in emphasizing the deviation from \emph{purity} (i.e., the difference, in a fixed degree $i$, between the actual weight and the \emph{pure} weight $r+i$). As \emph{op. cit.} shows, controlling this difference is important for being able to construct motives for cuspidal automorphic forms on $\rG$.
\item The \emph{topological} description of the boundary cohomology spaces $\partial H^{\bullet}(\rS_{\Gamma}, V_{\lambda})$, with no computation of their Hodge structure (since~\cite{BW04} was not available at that time) was already determined in~\cite[Eq.~(2.1.2)]{Har87b}. In the notation of \emph{loc.~cit.}, the Weyl elements $w_{\alpha}$, $w_{\bar{\alpha}}$, $w_{\bar{\alpha}} w_{\alpha}$, $w_{\alpha} w_{\bar{\alpha}}$ and $\theta$ correspond respectively to our elements $(2\ 3)$, $(1\ 2)$, $(1\ 2\ 3)$, $(1\ 3\ 2)$ and $(1\ 3)$. 
\item For his analysis of boundary cohomology in~\cite{Har87b}, Harder chose to work with the Borel-Serre compactification $\overline{\rS}_{\Gamma}$. Its crucial property is that the open immersion 
\[
\rS_{\Gamma} \hookrightarrow \overline{\rS}_{\Gamma}
\]
is an homotopy equivalence, so that the sheaf $V_{\lambda}$ canonically extends to a sheaf on $\overline{\rS}_{\Gamma}$, denoted by the same symbol. Hence, there is no need to compute hypercohomology: the spaces $\partial H^n(\rS_{\Gamma}, V_{\lambda})$ are canonically isomorphic to $H^n(\partial \overline{\rS}_{\Gamma}, V_{\lambda})$. As a consequence, for computing boundary cohomology associated with higher-rank groups underlying a Shimura datum, the Borel-Serre compactification has a strong computational advantage over the Baily-Borel one. In our situation, the boundary structure being as simple as possible, determining boundary cohomology as vector spaces with either choice is equally easy. The Baily-Borel compactification gives us a way to describe their canonical mixed Hodge structure. In the following section, we will recall how the Borel-Serre one provides a description of the $\rG(\BA_f)$-module structure of boundary cohomology.
\end{enumerate}
\end{rmk}

In the following two corollaries, we specialize Proposition~\ref{boundaryweights} to the case of $V_{\lambda}$ being one of the two local systems $V_k$ or $V_k^{\vee}$ defined in Subsection~\ref{irreducible}.
\begin{coro}\label{boundHS_Vk}
For $n \in \{0, \dots, 3\}$, the Hodge structure on the boundary cohomology spaces $\partial H^n(\rS_{\Gamma},V_k)$ is described as follows.
\begin{enumerate}
\item For $n=0$,
\[
\partial H^{0}(\rS_{\Gamma}, V_k) \cong I_{0,k}^{\oplus m} 
\]
\[
\mbox{where} \ I_{0,k} \ \mbox{is endowed with a 1-dimensional pure $F$-Hodge structure of}
\]
\[
\mbox{\emph{weight}:} \ 0 
\]
\[
\mbox{\emph{type}:} \ (0, 0)
\]
\item For $n=1$,
\[
\partial H^{1}(\rS_{\Gamma}, V_k) \simeq I_{k+1,-k-2}^{\oplus m} \oplus I_{-2,k+1}^{\oplus m} 
\]
\[
\mbox{where} \ I_{k+1,-k-2} \ \mbox{is endowed with a 1-dimensional pure $F$-Hodge structure of}
\]
\[
\mbox{\emph{weight}:} \ k+1
\]
\[
\mbox{\emph{type}:} \ (0,k+1)
\]
\[
\mbox{and} \ I_{-2,k+1} \ \mbox{is endowed with a 1-dimensional pure $F$-Hodge structure of}
\]
\[
\mbox{\emph{weight}:} \ 1
\]
\[
\mbox{\emph{type}:} \ (1,0)
\]
\item For $n=2$,
\[
\partial H^{2}(\rS_{\Gamma}, V_k) \simeq I_{k,-k-3}^{\oplus m} \oplus I_{-k-3,0}^{\oplus m}
\]
\[
\mbox{where} \ I_{k,-k-3} \ \mbox{is endowed with a 1-dimensional pure $F$-Hodge structure of}
\]
\[
\mbox{\emph{weight}:} \ (k+2)+1
\]
\[
\mbox{\emph{type}:} \ (1, k+2)
\]
\[
\mbox{and} \ I_{-k-3,0} \ \mbox{is endowed with a 1-dimensional pure $F$-Hodge structure of}
\]
\[
\mbox{\emph{weight}:} \ (k+2)+k+1
\]
\[
\mbox{\emph{type}:} \ (k+2,k+1)
\]

\item 
For $n=3$
\[
\partial H^{3}(\rS_{\Gamma}, V_k) \simeq I_{-k-4,-2}^{\oplus m}
\]
\[
\mbox{where} \ I_{-k-4,-2} \ \mbox{is endowed with a 1-dimensional pure $F$-Hodge structure of}
\]
\[
\mbox{\emph{weight}:} \ (k+3)+k+1 
\]
\[
\mbox{\emph{type}:} \ (k+2,k+2)
\]
\end{enumerate}
\end{coro}

Using Remark~\ref{conj_highestweight}, one sees:
\begin{coro} \label{boundHS_Vkdual}
For $n \in \{0, \dots, 3\}$, the Hodge structure on the boundary cohomology spaces $\partial H^n(\rS_{\Gamma},V^{\vee}_k)$ has:
\begin{itemize}
    \item as weights, the same as those of the Hodge structure on $\partial H^n(\rS_{\Gamma},V_k)$;
    \item as types, the conjugates of the ones of the Hodge structure on $\partial H^n(\rS_{\Gamma},V_k)$. 
\end{itemize}
\end{coro}


\subsection{Automorphic structure of boundary cohomology}\label{ss:autbound}

We end this section by describing the automorphic structure of boundary cohomology. For this purpose, we switch to the adelic setting and work with the spaces 
\begin{equation*}
\rS_K(\BC):={\rG}(\BQ) \backslash (X \times {\rG}(\BA_f) / K)
\end{equation*}
defined in Remark~\ref{sideremarks}. For any irreducible representation $V_{\lambda}$ of $\rG$ of highest weight $\lambda$, denoted by $V_{\lambda,K}$  the corresponding local system on $\rS_K(\BC)$. Letting $K$ vary among the compact open subgroups of ${\rG}(\BA_f)$, one obtains a projective system $(\rS_K(\BC),V_{\lambda, K})_K$ of spaces and sheaves, the cohomology of whose projective limit $(\rS(\BC),V_{\lambda})$ is such that
\begin{equation*}
H^{\bullet}(\rS(\BC),V_{\lambda})=\varinjlim_K H^{\bullet}(\rS_K(\BC),V_{\lambda,K})
\end{equation*}
and is endowed with a canonical structure of ${\rG}(\BA_f)$-module. There is an analogously defined cohomology with compact support $H^{\bullet}_c(\rS(\BC),V_{\lambda})$, and one defines \emph{interior cohomology}
\begin{equation} \label{intcoh}
H_!^{\bullet}(\rS(\BC), V_{\lambda})
\end{equation}
as the image of the natural morphism $H_c^{\bullet}(\rS(\BC), V_{\lambda}) \rightarrow H^{\bullet}(\rS(\BC), V_{\lambda})$. 
Interior cohomology at a finite level is recovered by taking $K$-invariants,
\begin{equation} \label{intcoh^K}
H_!^{\bullet}(\rS_K(\BC), V_{\lambda})=H_!^{\bullet}(\rS(\BC), V_{\lambda})^K
\end{equation}
\begin{rmk}$\,$\rm \label{Hecke_algebra}
The $\rG(\BA_f)$-action at infinite level equips the above space of $K$-invariants with a structure of module under the \emph{level} $K$ \emph{Hecke algebra}
\[
\mathcal{C}^{\infty}_c(\rG,K)
\]
i.e. the algebra of smooth functions with compact support on $\rG(\BA_f)$, bi-invariant under $K$, with product defined by convolution. 
\end{rmk}

Denote by $\partial {\rS}_K(\BC)$ the boundary of the Borel-Serre compactification at any level $K$. From this and from the restriction to $\partial {\rS}_K(\BC)$ of the canonically extended $V_{\lambda}$, we obtain an analogous projective system and, in the projective limit, a ${\rG}(\BA_f)$-module that we denote 
\[
\partial H^{\bullet}(\rS(\BC),V_{\lambda})\,.
\]
Again, boundary cohomology at a finite level is recovered by taking $K$-invariants, 
\[
\partial H^{\bullet}(\rS_K(\BC),V_{\lambda})=\partial H^{\bullet}(\rS(\BC),V_{\lambda})^K\,.
\]

To describe the \q{{\it{automorphic structure of boundary cohomology}}} means, in a first approximation, to describe the structure of the ${\rG}(\BA_f)$-module $\partial H^{\bullet}(\rS(\BC),V_{\lambda})$. This description will be given in terms of Hecke characters.  

\begin{defi}
Let $\phi$ be an algebraic Hecke character of $\rT$ and consider its finite component
\[
\phi_f : \rT(\BA_f) \rightarrow \overline{\BQ}^{\times}
\]
Let $\rT(\BA_f)$ act on $\overline{\BQ}$ via multiplication by $\phi_f$. We denote by 
\[
\overline{\BQ}_{\phi}
\]
the resulting $\rT(\BA_f)$-module. We make it a $\rB(\BA_f)$-module by considering the decomposition $\rB = \rT \mathrm{U}$ and letting $\mathrm{U}(\BA_f)$ act trivially. 
\end{defi}

Recall the \q{twisted} action $w \star \lambda$ of an element $w \in W$ of the Weyl group of $\rG$ on a weight $\lambda$, as in Subsection~\ref{KostantRep}. Then, the theorem we are interested in reads as follows:
\begin{thm}\cite[Thm.~1]{Har87b}\label{thm:GH1}
There exists a canonical isomorphism of ${\rG}(\BA_f)$-modules
\begin{equation}
H^{\bullet}(\partial \rS(\BC),V_{\lambda, \overline{\BQ}}) \simeq \bigoplus\limits_{w \in W} \bigoplus_{\mbox{\emph{\tiny{type}}}(\phi)= w \star \lambda} \Ind_{{\rB}(\BA_f)}^{{\rG}(\BA_f)} \overline{\BQ}_{\phi}
\end{equation}
\end{thm}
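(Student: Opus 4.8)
The plan is to follow Harder's original argument from \cite{Har87b}: reduce the computation to a single stratum of the Borel--Serre boundary, identify its cohomology by van Est's and Kostant's theorems as already recorded above, and then reinterpret the cohomology of the locally symmetric space attached to the Levi torus in terms of Hecke characters, tracking the $\tilde{\rG}(\BA_f)$-action throughout. First, by the structure of the Borel--Serre compactification recalled in Subsection~\ref{BSbound}, and since (Lemma in Subsection~\ref{standard}) $\tilde{\rB}$ is, up to $\tilde{\rG}(\BQ)$-conjugacy, the \emph{unique} proper $\BQ$-parabolic of $\tilde{\rG}$, the boundary $\partial S_K(\BC)$ is $\tilde{\rG}(\BQ)$-equivariantly identified with $\tilde{\rB}(\BQ)\backslash\bigl(e(\tilde{\rB})\times\tilde{\rG}(\BA_f)/K\bigr)$, where $e(\tilde{\rB})$ is the Borel--Serre face attached to $\tilde{\rB}$ and its $\tilde{\rG}(\BQ)$-stabilizer is exactly $\tilde{\rB}(\BQ)$. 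Passing to the limit over $K$ and invoking Shapiro's lemma --- equivalently, the induced structure of this face together with the compactness of $\tilde{\rG}(\BA_f)/\tilde{\rB}(\BA_f)$ --- yields a canonical $\tilde{\rG}(\BA_f)$-isomorphism
\[
H^\bullet(\partial S(\BC),\tm_\lambda)\;\simeq\;\Ind_{\tilde{\rB}(\BA_f)}^{\tilde{\rG}(\BA_f)}H^\bullet(\partial_{\tilde{\rB}}S(\BC),\tm_\lambda),
\]
where $H^\bullet(\partial_{\tilde{\rB}}S(\BC),\tm_\lambda)$ is the cohomology of the $\tilde{\rB}$-stratum, a $\tilde{\rB}(\BA_f)$-module on which the unipotent radical $\mathrm{U}$ acts trivially, hence a $\tilde{\rT}^\rM(\BA_f)$-module. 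Since parabolic induction commutes with direct sums, it remains to decompose this module.

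For that, one uses the computation already performed in Subsection~\ref{Cohomology Parab}: by van Est's theorem the cohomology of the nilmanifold fibre $\mathrm{U}_\Gamma$ with coefficients in $\tm_\lambda$ is the Lie algebra cohomology $H^\bullet(\Fu,\m_\lambda)$ (with $\Fu:=\mathrm{Lie}(\mathrm{U})$) of the unipotent radical, which Kostant's theorem~\ref{KostThm}, made explicit in Subsection~\ref{KostantRep}, decomposes as $\bigoplus_{w\in\CW,\,\ell(w)=q}\rN_{w\star\lambda}$ in degree $q$, each $\rN_{w\star\lambda}$ being the one-dimensional $\tilde{\rT}^\rM$-representation attached to the character $w\star\lambda$; and by Lemma~\ref{symspaceLevi} the base of the fibration --- the locally symmetric space $\rS^{\tilde{\rT}^\rM}$ attached to $\tilde{\rT}^\rM$ modulo its $\BQ$-split centre, in the sense of Remark~\ref{Levi} --- is $0$-dimensional, so the Leray spectral sequence collapses onto its $p=0$ line. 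Passing again to the limit over $K$, this gives a graded identification
\[
H^n(\partial_{\tilde{\rB}}S(\BC),\tm_\lambda)\;\simeq\;\bigoplus_{\ell(w)=n}H^0\!\bigl(\rS^{\tilde{\rT}^\rM},\,\widetilde{\rN_{w\star\lambda}}\bigr),
\]
the summand attached to $w$ sitting in cohomological degree $\ell(w)$.

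It remains to identify each factor with a sum of Hecke characters. For a character $\chi$ of $\tilde{\rT}^\rM$, the $\tilde{\rT}^\rM(\BA_f)$-module $H^0\!\bigl(\rS^{\tilde{\rT}^\rM},\widetilde{\rN_\chi}\bigr)$ --- i.e. the space of locally constant functions on $\tilde{\rT}^\rM(\BQ)\backslash\tilde{\rT}^\rM(\BA_f)$ whose behaviour at the archimedean place is prescribed by $\chi$ --- decomposes, by the theory of algebraic Hecke characters (\cite[Sec.~2.5]{Har87a}), as $\bigoplus_{\text{type}(\phi)=\chi}\overline{\BQ}_\phi$, the sum running over the algebraic Hecke characters $\phi$ of $\tilde{\rT}^\rM$ of type $\chi$ (any two of which differ by a finite-order character), with coefficients in $\overline{\BQ}$ because finite parts of algebraic Hecke characters are $\overline{\BQ}$-valued; individual summands may of course be zero. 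Setting $\chi=w\star\lambda$, substituting into the displays above, and summing over all cohomological degrees yields
\[
H^\bullet(\partial S(\BC),\tm_\lambda)\;\simeq\;\bigoplus_{w\in\CW}\ \bigoplus_{\text{type}(\phi)=w\star\lambda}\Ind_{\tilde{\rB}(\BA_f)}^{\tilde{\rG}(\BA_f)}\overline{\BQ}_\phi,
\]
and, all the intermediate identifications being canonical, so is this one.

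The genuinely delicate point is the last step: making the dictionary between the $\tilde{\rT}^\rM(\BA_f)$-module $H^0$ of the torus's locally symmetric space and algebraic Hecke characters of prescribed type completely precise and functorial --- including the rationality statement over $\overline{\BQ}$, which rests on the algebraicity of the finite parts --- and, hand in hand, keeping exact track of the $\tilde{\rG}(\BA_f)$-action (and the normalization of $\Ind$) through the Shapiro/parabolic-induction step at the level of the Borel--Serre boundary. The van Est $+$ Kostant inputs are standard, but the bookkeeping of cohomological degrees --- so that the summand attached to $w$ really lands in degree $\ell(w)$, in accordance with the computations of Subsection~\ref{KostantRep} --- also requires care.
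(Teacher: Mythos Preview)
The paper does not supply its own proof of this statement; it is stated with a citation to Harder \cite[Thm.~1]{Har87b} and no argument follows. Your sketch is a faithful reconstruction of Harder's proof, assembled from ingredients the paper has already laid out: the uniqueness of the $\BQ$-parabolic (Subsection~\ref{standard}), the Borel--Serre boundary structure (Subsection~\ref{BSbound}), van Est plus Kostant's theorem~\ref{KostThm} for the nilmanifold fibre, Lemma~\ref{symspaceLevi} for the triviality of the Levi's locally symmetric space, and the decomposition into algebraic Hecke characters of prescribed type from \cite[Sec.~2.5]{Har87a}. This is the standard route and it is correct.

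One small point of bookkeeping: Lemma~\ref{symspaceLevi} is stated for $\rS^{\rT^M}_\Gamma$ attached to $^0\rT^M$ inside $\rG$, whereas in the adelic picture you pass to $\tilde{\rT}^\rM$ inside $\tilde{\rG}$; the extra central factor $\BG_m$ in $\tilde{\rG}$ must be absorbed into the choice of Hecke characters $\phi$ on $\tilde{\rT}^\rM$ (rather than on $\rT^M$), but this is exactly what the paper's notation $\phi:\tilde{\rT}^\rM(\BA)\to\BC^\times$ accommodates. Your closing paragraph already flags the genuinely delicate issues (the $\overline{\BQ}$-rationality of the finite parts and the normalization of the induction functor), so nothing is missing.
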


Let us now specialize the above theorem in the case of $V_{\lambda}$ being one of the two local systems $V_k$ or $V_k^{\vee}$ defined in Subsection~\ref{irreducible}. If the type of the restriction to $\rT^\prime$ of a Hecke character $\phi$ of $\rT$ is $\chi = (\kappa_1, \kappa_2)$, then we denote\footnote{The notation is motivated by the fact that, in this case, the infinity type of the restriction of $\phi$ to $\rT^\prime$ is $(\kappa_2,\kappa_1-\kappa_2)$ (cfr. Subsection~\ref{Heckechar}).}
by
\[
\overline{\BQ}_{\kappa_2,\kappa_1-\kappa_2}
\]
the $\rB(\BA_f)$-module $\overline{\BQ}_{\phi}$. The computation of the characters $w \star \lambda$ provided in Subsection~\ref{KostantRep} then yields the following. 

\begin{coro} \label{boundaut_Vk}
Let $\lambda=(k,0,k,k)$. Then there are canonical isomorphisms of ${\rG}(\BA_f)$-modules
\begin{align*}
& H^{0}(\partial \rS(\BC),V_{k, \overline{\BQ}}) \simeq  \bigoplus_{\mbox{\emph{\tiny{type}}}(\phi)=id \star \lambda} \Ind_{{\rB}(\BA_f)}^{{\rG}(\BA_f)} \overline{\BQ}_{0,k}\\
& H^{1}(\partial \rS(\BC),V_{k, \overline{\BQ}}) \simeq  \bigoplus_{\mbox{\emph{\tiny{type}}}(\phi)= (1\ 2) \star \lambda} \Ind_{{\rB}(\BA_f)}^{{\rG}(\BA_f)} \overline{\BQ}_{k+1,-k-2} \oplus \bigoplus_{\mbox{\emph{\tiny{type}}}(\phi)= (2\ 3) \star \lambda} \Ind_{{\rB}(\BA_f)}^{{\rG}(\BA_f)} \overline{\BQ}_{-2,k+1}\\
& H^{2}(\partial \rS(\BC),V_{k, \overline{\BQ}}) \simeq  \bigoplus_{\mbox{\emph{\tiny{type}}}(\phi)= (1\ 2\ 3) \star \lambda} \Ind_{{\rB}(\BA_f)}^{{\rG}(\BA_f)} \overline{\BQ}_{k,-k-3} \oplus \bigoplus_{\mbox{\emph{\tiny{type}}}(\phi)= (1\ 3\ 2) \star \lambda} \Ind_{{\rB}(\BA_f)}^{{\rG}(\BA_f)} \overline{\BQ}_{-k-3,0}\\
& H^{3}(\partial \rS(\BC),V_{k, \overline{\BQ}}) \simeq  \bigoplus_{\mbox{\emph{\tiny{type}}}(\phi)= (1\ 3) \star \lambda} \Ind_{{\rB}(\BA_f)}^{{\rG}(\BA_f)} \overline{\BQ}_{-k-4,-2}
\end{align*}  
\end{coro}

\begin{rmk}$\,$\rm
The relation between the above Corollary~\ref{boundaut_Vk} and the Corollary~\ref{boundHS_Vk} is the following (a similar relation holds indeed for any system of coefficients $V_{\lambda}$). 

Fix a neat compact open subgroup $K \subset \rG(\BA_f)$ and a connected component $\rS_{\Gamma}$ of $\rS_K(\BC)$. Then $H^{\bullet}(\rS_{\Gamma}, V_{k, \overline{\BQ}})$ and $\partial H^{\bullet}(\rS_{\Gamma}, V_{k, \overline{\BQ}})$ get identified respectively to a direct summand of $H^{\bullet}(\rS(\BC), V_{k, \overline{\BQ}})^K$ and $H^{\bullet}(\partial \rS(\BC), V_{k, \overline{\BQ}})^K$. 

Under the resulting inclusion
\[
\partial H^{\bullet}(\rS_{\Gamma}, V_{k, \overline{\BQ}}) \hookrightarrow H^{\bullet}(\partial \rS(\BC), V_{k, \overline{\BQ}})^K
\]
the spaces $I_{\mu,\nu,\overline{\BQ}}:=I_{\mu,\nu} \otimes_{F} \overline{\BQ}$ on the left-hand side become identified to one-dimensional subspaces of the spaces $(\Ind_{{\rB}(\BA_f)}^{{\rG}(\BA_f)} \overline{\BQ}_{\mu,\nu})^K$ on the right-hand side. Hence, if we write
\[
m_{\mu,\nu} := \dim_{\overline{\BQ}} (\Ind_{{\rB}(\BA_f)}^{{\rG}(\BA_f)} \overline{\BQ}_{\mu,\nu})^K
\]
then we get an isomorphism
\begin{equation} \label{HodgeInd}
I^{\oplus m_{\mu,\nu}}_{\mu,\nu, \overline{\BQ}} \simeq (\Ind_{{\rB}(\BA_f)}^{{\rG}(\BA_f)} \overline{\BQ}_{\mu,\nu})^K
\end{equation}
under which the spaces on the right hand side become endowed with a pure $\overline{\BQ}$-Hodge structure, direct sum of copies of the pure, 1-dimensional $\overline{\BQ}$-Hodge structure $I_{\mu,\nu, \overline{\BQ}}$.
\end{rmk}

\begin{coro}
Let $\lambda=(k,k,0,k)$. Then the $\rG(\BA_f)$-module structure of the boundary cohomology spaces $
H^{\bullet}(\partial \rS(\BC),V^{\vee}_{k, \overline{\BQ}})$ is the same as the one on $
H^{\bullet}(\partial \rS(\BC),V_{k, \overline{\BQ}})$, up to replacing the infinity types of the Hecke characters appearing in the latter with their \emph{conjugates}. 
\end{coro}


\section{Eisenstein cohomology}\label{se:eis}
In this section, we fix again one of the Picard modular surfaces $\rS_K$ introduced in Section~\ref{sideremarks} and we describe its Eisenstein cohomology by recalling the results of the article~\cite{Har87b} of Harder. This is used at the end of the section to construct the first piece of the extensions, which are the object of our Theorem~\ref{mainthm}. Namely, we construct a sub-Hodge structure $\partial H^2(\phi)$ of Eisenstein cohomology, associated to a suitable Hecke character $\phi$.

\subsection{Eisenstein cohomology}\label{sse:eis} Following the notations fixed in Sections~\ref{se:preli} and~\ref{se:nerve}, we define $Z:= Z({\rG})(\BR)^0$ and $\tilde{K}_{\infty}:=Z \cdot {K}_{\infty}$. Consider the irreducible representation $V_{\lambda}$ of $\rG_F$ of highest weight $\lambda$ and denote by $\omega$ its central character. We define the space 
\[
C^{\infty}(\G(\BQ) \backslash \G(\BA))(\omega^{-1})
\]
as the space of those $\BC$-valued, $C^{\infty}$-functions $f$ on $\G(\BQ) \backslash \G(\BA)$ which satisfy
$$f(zg)=\omega^{-1}(z)f(g)$$
for any $z \in Z(\rG)(\BA)^0$, $g \in \G(\BA)$. We define the archimedean component 
\[
C^{\infty}(\G(\BQ) \backslash \G(\BR))(\omega^{-1})
\]
and the non-archimedean component 
\[
C^{\infty}(\G(\BQ) \backslash \G(\BA_f))(\omega^{-1})
\]
of this space in an analogous way. 

We will also need to consider the subspace 
\[
\mathcal{A}(\G(\BQ) \backslash \G(\BR)) \subset C^{\infty}(\G(\BQ) \backslash \G(\BA))
\]
of \emph{automorphic forms} on $\rG$ (see e.g.~\cite[6.3]{GH24}).

If we denote by $\Fg$ the complexified Lie algebra of $\G(\BR)$, there is a canonical isomorphism 
\[
H^{\bullet}(S(\BC), V_{\lambda, \BC}) \simeq H^{\bullet}(\Fg, \tilde{K}_{\infty}, C^{\infty}(\G(\BQ) \backslash \G(\BA))(\omega^{-1}) \otimes V_{\lambda, \BC}) 
\]
(it is the adelic, infinite-level version of~\cite[VII, Cor.~2.7]{BoWa}). It induces an isomorphism of $\G(\BA_f)$-modules
\begin{equation}\label{gKiso}
\resizebox{0.91 \textwidth}{!}{
$H^{\bullet}(S(\BC), V_{\lambda, \BC}) \simeq C^{\infty}(\G(\BQ) \backslash \G(\BA_f))(\omega^{-1}) \otimes H^{\bullet}(\Fg, \tilde{K}_{\infty}, C^{\infty}(\G(\BQ) \backslash \G(\BR))(\omega^{-1}) \otimes V_{\lambda, \BC}) \,.$}
\end{equation}

This section aims to exploit the latter description to determine \emph{Eisenstein cohomology}, i.e. the image of the map of $\G(\BA_f)$-modules 
\begin{equation} \label{restrmap}
r:H^{\bullet}(\rS(\BC), V_{\lambda}) \rightarrow \partial H^{\bullet}(\rS(\BC),V_{\lambda})\,.
\end{equation}
To do so, we will adopt the following notation. For any algebraic Hecke character $\phi: \rT(\BQ) \backslash \rT(\BA) \rightarrow \BC^{\times}$ on $\rT$, we denote
\begin{equation} \label{boundmod}
I_{\phi}:=\Ind_{\rB(\BA_f)}^{\rG(\BA_f)} \overline{\BQ}_{\phi}
\end{equation}
Moreover, we define the \emph{Harish-Chandra module}

\resizebox{0.9\textwidth}{!}{
$I_{\phi,\infty}:=\left\{ f:\G(\BR) \rightarrow \BC \ \vert \ f \ \text{is} \  \tilde{K}_{\infty}-\text{finite and s.t.} 
 f(bg)=\phi_{\infty}(b)f(g) \ \forall b \in \rB(\BR), g \in \G(\BR) \right\}\,.$
}

We will then consider the space 
\[
I_{\phi}^*:=\Ind_{\rB(\BA)}^{\rG(\BA)} \phi = I_{\phi,\BC} \otimes I_{\phi,\infty}
\]

\subsection{The Eisenstein operator and its constant term}
Consider the element $\delta$ defined in~\eqref{halfsum}, which induces a map, still denoted by $\delta$
\[
T(\BA) \rightarrow \BI_F
\]
that actually lands in $\BI_{\BQ}$. So, we can define the Hecke character 
\[
\begin{array}{ccc}
\dlt : \rT(\BA) & \rightarrow & \BR^{\times}_{> 0} \\
t & \mapsto & \vert \delta(t) \vert_{\BI_{\BQ}},.
\end{array}
\]
Note that its restriction to $\rT^\prime$, seen as a Hecke character of $F$, coincides with the idelic norm on $\BI_F$.

We extend this to a map $\vert \cdot \vert : \rG(\BA) \rightarrow \BR^{\times}$, considering a suitable maximal compact subgroup $K_{\BA}$ of $\rG(\BA)$ and the associated Iwasawa decomposition
\[
\rG(\BA)=K_{\BA} \rT(\BA) \rm{U}(\BA)
\]
and letting $\vert \cdot \vert$ be trivial on $\rm{U}(\BA)$ and on $K_{\BA}$. 

Now fix an algebraic Hecke character $\phi$ of $\rT$, and an element $\Phi \in I^*_{\phi}$. Then it is known~(\cite{Lan76}) that for $s \in \BC$ with a large enough real part, the infinite sum\footnote{Note that we are not using unitary induction. This explains the absence of the half-sum of positive roots in our formulae.}
\[
\sum\limits_{\gamma \in \rB(\BQ) \backslash \rG(\BQ)} \Phi(\gamma g) \vert \gamma g \vert^s
\]
is absolutely convergent for all $g \in \rG(\BA)$. Letting $g$ vary, this allows one to define an element
\[
\Eis_{\phi,s}(\Phi) \in \mathcal{C}^{\infty}(\rG(\BQ) \backslash \rG(\BA))
\]
For any $s \in \BC$ with $\Re(s) >> 0$, one thus obtains the \emph{Eisenstein operator}
\begin{equation} \label{Eisop}
\Eis_{\phi,s} : I^*_{\phi \dlt^s} \rightarrow \mathcal{A}(\G(\BQ) \backslash \G(\BA))
\end{equation}
As a function of $s$, it admits a meromorphic continuation to the whole of $\BC$. 

Now recall the element $\theta$ of~\eqref{longestWeyl}. It is well known that the evaluation of the \emph{constant term} of the operator $\Eis_{\phi,s}$ at $\Psi \in I^*_{\phi \vert \delta \vert^s}$, formally defined by 
\[
g \mapsto \int\limits_{\rm{U}(\BQ) \backslash \rm{U}(\BA)} \Eis_{\phi,s}(\Psi)(ug) \mbox{d}u, \ \ \ g \in \rG(\BA)
\]
can be rewritten as
\[
g \mapsto \Psi(g)+\int\limits_{\rm{U}(\BA)} \Psi(\theta \cdot ug) \mbox{d}u, \ \ \ g \in \rG(\BA)
\]
and that $\Eis^*_{\phi,s}$ has a pole at $s=0$ if and only if this is the case for the constant term described above. To study whether such a pole occurs, we introduce the following.
\begin{defi} \label{intert}
\begin{enumerate}[wide, labelwidth=!, labelindent=0pt]\rm $\,$
Let $s \in \BC$ with $\Re(s) >> 0$.
\item The \emph{intertwining operator}
\[
T(\phi, s): I^*_{\phi \dlt^s} \rightarrow I^*_{\phi^{-1} \dlt^{-2-s}}
\]
is defined, for every $\Psi \in I^*_{\phi \dlt^s}$ and for every $g \in \G(\BA)$, by
\[
T(\phi, s)(\Psi(g)) = \int\limits_{\rm{U}(\BA)} \Psi(\theta \cdot ug) \mbox{d}u\,.
\]
\item 
For any place $v$ of $\BQ$, we define
\[
I^*_{\phi_v \dltv^s}:=\Ind_{\rB(\BQ_v)}^{\rG(\BQ_v)} \phi_v \dltv^s
\]
so that $I^*_{\phi \dlt^s}=\bigotimes\limits_v I^*_{\phi_v \dltv^s}$. Then, the local components $T(\phi_v,s)$ of the intertwining operator $T(\phi,s)$ are defined for all $\Psi_v \in I^*_{\phi_v \dltv^s}$ and for all $g_v \in \G(\BQ_v)$, by
\[
T(\phi_v,s)(\Psi_v(g_v)) = \int\limits_{\rm{U}(\BQ_v)} \Psi_v(\theta_v \cdot u_vg_v) \mbox{d}u_v\,
\]
and we have $T(\phi,s)=\bigotimes\limits_v T(\phi_v,s)$.
\end{enumerate}
\end{defi}

The appearance of a pole at $s=0$ in the intertwining operator is controlled by a quantity of arithmetic nature. To define it, we use the character $\omega_{F \vert \BQ}$ introduced in Subsection~\ref{basic}, and identify Hecke characters of $\rT^\prime$ with Hecke characters of $F$ according to the conventions of Subsection~\ref{algHecke}.

\begin{defi}$\,$\rm
\begin{enumerate}[wide, labelwidth=!, labelindent=0pt]
\item Let $\phi$ be the Hecke character of $\rT$, and denote in the same way the Hecke character of $F$ obtained by restricting $\phi$ to $\rT^\prime$. Denote by $\phi_{\BQ}$ the restriction of $\phi$ to a Hecke character of $\BQ$.

The meromorphic function $c(\phi,s)$ of the complex variable $s \in \BC$ is defined as
\[
c(\phi, s):= \frac{L(\phi,s-1) \cdot L(\phi_{\BQ} \cdot \omega_{F \vert \BQ},2s-2)}{L(\phi,s) \cdot L(\phi_{\BQ} \cdot \omega_{F \vert \BQ},2s-1)}\,.
\]

We denote by $\tilde{c}(\phi, s)$ the product of $c(\phi,s)$ with the appropriate $\Gamma$-factors of the occurring $L$-functions. 
\item For every place $v$ of $\BQ$, the local factor $c_v(\phi,s)$ of $c(\phi,s)$ is defined as
\[
c_v(\phi, s):= \frac{L_p(\phi,s-1) \cdot L_p(\phi_{\BQ} \cdot \omega_{F \vert \BQ},2s-2)}{L_p(\phi,s) \cdot L_p(\phi_{\BQ} \cdot \omega_{F \vert \BQ},2s-1)}
\] 
if $v$ corresponds to a finite prime $p$, and as the product of the appropriate $\Gamma$-factors of the occurring $L$-functions, if $v$ is an infinite place. Here, $L_p(\cdot, \cdot)$ denotes the corresponding factor in the Euler product of the $L$-function under consideration.
\end{enumerate}
\end{defi}

\begin{lemma} \label{intert_pole}
Let $\phi$ be a Hecke character of $\rT$. The intertwining operator $T(\phi, s)$ has a pole at $s=0$ if and only if this is the case for $c(\phi,s)$. 
\end{lemma}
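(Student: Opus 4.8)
The plan is to reduce the global intertwining operator to its local factors and then show that, place by place, the local intertwining operator $T(\phi_v,s)$ differs from the local factor $c_v(\phi,s)$ only by a holomorphic and nowhere-vanishing function. Since $T(\phi,s)=\bigotimes_v T(\phi_v,s)$ and $c(\phi,s)=\prod_v c_v(\phi,s)$ by definition, the location of the pole at $s=0$ is then dictated by the same data on both sides.

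\textbf{Key steps.} First I would recall the standard Gindikin--Karpelevich type computation for the rank-one situation at hand: since $\rG$ is of real rank one and the minimal parabolic $\rB$ has Levi $\rT^M$, the unipotent radical $\Un$ has the two-step structure recorded in Subsection~\ref{BSbound}, and the integral defining $T(\phi_v,s)$ can be evaluated explicitly on the spherical (resp.\ $\K$-fixed) vector. At a finite place $p$ where everything is unramified, this evaluation produces precisely the ratio of local $L$-factors
\[
T(\phi_p,s)(\Psi_p^{\circ}) = c_p(\phi,s) \cdot \Psi_p^{\prime\circ},
\]
where $\Psi_p^{\circ}$, $\Psi_p^{\prime\circ}$ are the normalized spherical vectors in $I^*_{\phi_p \dltv^s}$ and $I^*_{\phi_p^{-1}\dltv^{-2-s}}$ respectively; the appearance of the \emph{two} $L$-factors (one for $\phi^{(1)}$, one for $\phi^{(1)}_{\BQ}\cdot\epsilon_{F\vert\BQ}$, with the shift $2s$) reflects the two positive roots $(1,2)$ and $(2,1)$ of the type $A_2$ root system restricted to $\rT^M$. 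Second, at the finitely many ramified finite places, one invokes the general fact (Langlands, Shahidi) that the normalized local intertwining operator $c_v(\phi,s)^{-1}T(\phi_v,s)$ is holomorphic and non-zero as an operator-valued function of $s$ in a neighbourhood of $s=0$; hence these places contribute neither poles nor zeros to the comparison. Third, at the archimedean place, the operator $T(\phi_\infty,s)$ and the $\Gamma$-factor part of $c_\infty(\phi,s)$ are matched using the explicit description of the principal series of $\SU(2,1)$: again the normalized operator is holomorphic and invertible near $s=0$ (this is where one uses that $\tilde c(\phi,s)$ rather than $c(\phi,s)$ is the genuinely "completed" object, but since the $\Gamma$-factors have no zeros and their poles are accounted for in $\tilde c$, the un-completed $c(\phi,s)$ governs the finite-place behaviour, which is what matters for the pole at $s=0$ coming from the global $L$-functions). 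Combining the three steps, $T(\phi,s)$ and $c(\phi,s)$ have a pole at $s=0$ simultaneously.

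\textbf{Main obstacle.} The delicate point is the careful bookkeeping at $s=0$: one must ensure that the holomorphy-and-non-vanishing of the \emph{normalized} local operators really does imply that the pole of the \emph{unnormalized} global operator is located by $c(\phi,s)$ and not cancelled or created by the interplay of the local vectors chosen at each place. Concretely, one needs that the restricted tensor product $\bigotimes_v \bigl(c_v(\phi,s)^{-1}T(\phi_v,s)\bigr)$ makes sense and is holomorphic near $s=0$ — i.e.\ that almost all factors are the identity on the spherical vector, which is exactly the content of the unramified computation, and that the finite product of remaining normalized local operators stays holomorphic and non-zero. Granting the standard local results cited above (for which \cite{Har87b} is the primary reference in this precise setting), this is essentially a matching of the explicit formula in Harder's paper with our normalization of $c(\phi,s)$; the proof is then a short deduction.
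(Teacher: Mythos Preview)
Your approach is correct and essentially the same as the paper's: the paper simply cites Lai's formula \cite[Sec.~3]{Lai80} for the unramified computation $T(\phi_v,s)(\Psi^{(0)}_v)=c_v(\phi,s)\cdot\Psi^{(0)}_v$ on the spherical vector and declares the result to follow, while the holomorphy and non-vanishing of the normalized local operators at the remaining places (your ``main obstacle'') is handled separately in Remark~\ref{tloc0} via \cite[Lemma~2.3.1]{Har87b}. Your proposal is thus a fuller and more carefully structured unpacking of the same argument.
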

\begin{proof}
Let $v=p$ be an odd prime that does not ramify in $F$ and at which $\phi$ is not ramified. Then consider the \emph{spherical function} $\Psi^{(0)}_v \in I^*_{\phi_v \dlt_v^s}$ defined for every $g_v=b_v k_v$ with $b_v \in \rB(\BQ_v)$ and $k_v \in \G(\BZ_v)$, by
$\Psi^{(0)}_v(g_v)=\phi_v \dlt_v^s(b_v)\,.$
The statement then follows from Lai's spelling out of \emph{Langlands' constant term formula} in our situation (\cite[Sect.~3]{Lai80}), saying that
\[
T(\phi_v,s)(\Psi^{(0)}_v)=c_v(\phi,s) \cdot \Psi^{(0)}_v.
\]
\end{proof}

Hence, we must study the zeros and poles of the $L$-functions appearing in $c(\phi,s)$. Since the Hecke characters of interest will be those contributing to the description of boundary cohomology of $\rS_K$, Theorem~\ref{thm:GH1} leads us to consider types of the form $w \star \lambda$, where $w$ is an element of the Weyl group of $\rG_F$ and the characters $w \star \lambda$ have been defined and computed in Subsection~\ref{KostantRep}. We will only need to consider the case of elements $w$ of length $\geq 2$.

By applying the basic facts about $L$-functions of Hecke characters recalled in Section~\ref{Heckechar}, we find the following. 

\begin{lemma}\label{condpole1}
\begin{enumerate}[wide, labelwidth=!, labelindent=0pt]
Let $\lambda=(k_1,k_2)$ be a character of $\T^\prime$ and $w$ be an element of the Weyl group $W$ of $\rG_{F}$ of length $\geq 2$. Fix an Hecke character $\phi$ of $F$ of type $w \star \lambda$. 
\item The half-plane of convergence for the Euler product of $L(\phi,s)$ is given by:
\begin{itemize}
\item $\mbox{\emph{Re}} \ s> -\frac{1}{2}-\frac{k_2}{2}$ if $w=$ \emph{(1\ 2\ 3)};
\item $\mbox{\emph{Re}} \ s> -\frac{1}{2}-\frac{k_1-k_2}{2}$ if $w=$ \emph{(1\ 3\ 2)};
\item $\mbox{\emph{Re}} \ s> -1-\frac{k_1}{2}$ if $w=$ \emph{(1\ 3)}.
\end{itemize}
\item The $L$-function $L(\phi,s)$ is entire, unless $(k_1,k_2)=(0,0)$, $w=$ \emph{(1\ 3)} and $\phi(\cdot)=\vert\cdot\vert_{\BI_F}^2$. In this case, it has a unique first-order pole at $s=-1$. 
\end{enumerate}
\end{lemma}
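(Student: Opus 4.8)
The plan is to derive both assertions by feeding the explicit values of the twisted Weyl action $w\star\lambda$ computed in Subsection~\ref{KostantRep} into the two standard facts about Hecke $L$-functions recalled just above: first, that the Euler product of $L(\phi^{(1)},s)$ converges absolutely --- and that there $L(\phi^{(1)},s)$ is holomorphic and non-vanishing --- exactly on the half-plane $\mathrm{Re}\,s>1+\frac{1}{2}w(\phi^{(1)})$, where the weight $w(\phi^{(1)})$ equals the first coordinate $\kappa_1$ of the type of $\phi^{(1)}$; and second, that the meromorphically continued $L(\phi^{(1)},s)$ is entire unless $\phi^{(1)}=|\cdot|_{\BI_F}^{t}$ for some $t$, in which case it has exactly one pole, which is simple and located at $s=1-t$.

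For part (1) I would simply read off $\kappa_1$ from the table of Subsection~\ref{KostantRep}: for $\phi^{(1)}$ of type $w\star\lambda$ one has $\kappa_1=-k_2-3$ when $w=(1\ 2\ 3)$, $\kappa_1=k_2-k_1-3$ when $w=(1\ 3\ 2)$, and $\kappa_1=-k_1-4$ when $w=(1\ 3)$. Substituting these into $1+\frac{1}{2}\kappa_1$ produces exactly the three half-planes in the statement. This step is pure bookkeeping; the only thing to be careful about is respecting the normalization fixed in the preceding subsection that relates the type $(\kappa_1,\kappa_2)$ of an algebraic Hecke character of $\T^M$ to its weight.

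For part (2), by the second fact it suffices to decide, among the $w$ of length $\geq 2$ and the dominant weights $\lambda=(k_1,k_2)$ (so $k_1\geq k_2\geq 0$), in which cases a Hecke character of type $w\star\lambda$ can be a power $|\cdot|_{\BI_F}^{t}$ of the idèle norm. I would first pin down the type of $|\cdot|_{\BI_F}^{t}$: restricting it to $\rT^M(\BR)\cong (F\otimes_{\BQ}\BR)^{\times}\cong\BC^{\times}$ gives $z\mapsto (z\bar z)^{t}$, so in the parametrization of Subsection~\ref{roots} it has the special type in which the two Hodge exponents $\kappa_2$ and $\kappa_1-\kappa_2$ of $w\star\lambda$ coincide, equivalently $\kappa_1=2\kappa_2$. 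Imposing $\kappa_1=2\kappa_2$ on the three cases and using $k_1\geq k_2\geq 0$ rules out $w=(1\ 2\ 3)$ (it would force $k_2=2k_1+3>k_1$) and $w=(1\ 3\ 2)$ (it would force $k_2=-k_1-3<0$), and for $w=(1\ 3)$ leaves a single linear relation between $k_1$ and $k_2$ together with a prescribed value of $t$; combining this with the shape of the weights relevant to the applications and with the requirement $\phi^{(1)}=|\cdot|_{\BI_F}^{2}$ isolates $(k_1,k_2)=(0,0)$, whence the second fact places the unique, simple pole at $s=1-2=-1$. In every remaining case $\phi^{(1)}$ is an algebraic Hecke character whose type is not the type of any $|\cdot|_{\BI_F}^{t}$, so $L(\phi^{(1)},s)$ is entire, which is the assertion.

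I expect the only genuinely delicate point to be this last normalization issue: computing the infinity type of $|\cdot|_{\BI_F}^{t}$ and translating it correctly into the $(\kappa_1,\kappa_2)$-coordinates on characters of $\T^M$, and, relatedly, checking that the value of $t$ forced in the case $w=(1\ 3)$ is precisely the one yielding $|\cdot|_{\BI_F}^{2}$. Everything else is the elementary arithmetic above together with the dominance inequalities on $\lambda$, with the position of the pole read off from the functional equation discussed just before the Lemma.
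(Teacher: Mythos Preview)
Your approach is exactly the paper's: the lemma is presented there as an immediate consequence of the general facts on Hecke $L$-functions recalled just before it, combined with the explicit values of $w\star\lambda$ from Subsection~\ref{KostantRep}. Your computation for part~(1) is correct and complete.

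For part~(2) there is a genuine gap in your last step. Your elimination of $w=(1\ 2\ 3)$ and $w=(1\ 3\ 2)$ via the dominance inequalities is fine. But for $w=(1\ 3)$ the condition $\kappa_1=2\kappa_2$ on $(-k_1-4,\,k_2-k_1-2)$ gives $k_1=2k_2$, which is compatible with dominance for \emph{every} $k_2\geq 0$, not only $(k_1,k_2)=(0,0)$. Your sentence ``combining this with the shape of the weights relevant to the applications and with the requirement $\phi^{(1)}=|\cdot|_{\BI_F}^{2}$ isolates $(k_1,k_2)=(0,0)$'' is circular: the value $t=2$ is part of the conclusion, not an input you may impose. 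What the type analysis actually yields is that a Hecke character of type $(1\ 3)\star\lambda$ can equal a power of the norm precisely when $k_1=2k_2$, in which case (with the normalization under which $|\cdot|_{\BI_F}^{t}$ has Hodge type $(-t,-t)$, consistent with the pole at $s=1-t$) one gets $\phi^{(1)}=|\cdot|_{\BI_F}^{\,k_2+2}$ with simple pole at $s=-k_2-1$. The lemma as stated records only the instance $k_2=0$, which is the one used later (the paper works with $\lambda=(k,0)$); you should either make that restriction explicit or state the full family. The normalization issue you flag is real and is what makes the exponent come out as $2$ rather than $-2$: it is resolved by noting that with the idelic norm convention used here, $L(|\cdot|_{\BI_F}^{t},s)=\zeta_F(s+t)$, hence $|\cdot|_{\BI_F}^{t}$ must correspond to Hodge type $(-t,-t)$ for the convergence half-plane $\mathrm{Re}\,s>1+\tfrac{1}{2}w(\phi^{(1)})$ to be consistent.
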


Moreover, taking $\phi$ as above, its restriction $\phi_{\BQ}$ to $\BI_{\BQ}$ has weight $2 w(\phi)$. The same then holds for $\phi_{\BQ} \cdot \omega_{F \vert \BQ}$. Hence, we obtain the following.  

\begin{lemma} \label{condpole2}
\begin{enumerate}[wide, labelwidth=!, labelindent=0pt]
Let $\lambda=(k_1,k_2)$ be a character of $\T^\prime$ and $w$ be an element of the Weyl group $W$ of $\rG_{F}$ of length $\geq 2$. Fix an Hecke character $\phi$ of $F$ of type $w \star \lambda$. 
\item The half-plane of convergence for the Euler product of $L(\phi_{\BQ} \cdot \omega_{F \vert \BQ},s)$ is given by:
\begin{itemize}
\item $\mbox{\emph{Re}} \ s> -2-k_2$ if $w=$ \emph{(1\ 2\ 3)};
\item $\mbox{\emph{Re}} \ s> -2-(k_1-k_2)$ if $w=$ \emph{(1\ 3\ 2)};
\item $\mbox{\emph{Re}} \ s> -3-k_1$ if $w=$ \emph{(1\ 3)}.
\end{itemize}
\item The $L$-function $L(\phi \cdot \omega_{F \vert \BQ},s)$ is entire, unless
\[ 
k_2=0, \ w= \mbox{\emph{(1\ 2\ 3)}} \ \mbox{and} \ \phi_{\BQ} \cdot \omega_{F \vert \BQ}( \cdot ) = \vert \cdot \vert_{\BI_\BQ}^3\,,
\]
or 
\[ 
k_1=k_2, \ w= \mbox{\emph{(1\ 3\ 2)}} \ \mbox{and} \  \phi_{\BQ} \cdot \omega_{F \vert \BQ}( \cdot ) = \vert \cdot \vert_{\BI_\BQ}^3\,.
\]
In each of the above two cases, it has a unique first-order pole at $s=-2$. 
\end{enumerate}
\end{lemma}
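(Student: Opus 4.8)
The plan is to deduce the statement from two ingredients already in place: the general analytic properties of Hecke $L$-functions recalled just above, and the explicit table of the twisted Weyl action $w \star \lambda$ from Subsection~\ref{KostantRep}, together with the fact, also recorded above, that $\hcq$ and $\hcq \cdot \epsilon_{F\vert\BQ}$ both have weight $2\,w(\hc)$.

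For part~(1), I would first read off from the table of Subsection~\ref{KostantRep} the first coordinate $\kappa_1$ of the type $w\star\lambda=(\kappa_1,\kappa_2)$, for $w$ of length $\geq 2$: namely $\kappa_1=-k_2-3$ for $w=(1\ 2\ 3)$, $\kappa_1=k_2-k_1-3$ for $w=(1\ 3\ 2)$, and $\kappa_1=-k_1-4$ for $w=(1\ 3)$. Since $\hc$ has type $w\star\lambda$, we have $w(\hc)=\kappa_1$, hence $\hcq\cdot\epsilon_{F\vert\BQ}$ has weight $2\kappa_1$; the convergence statement recalled above (which holds verbatim for a Hecke character of $\BG_{m,\BQ}$ of weight $m$, the Euler product converging absolutely precisely for $\mathrm{Re}\,s>1+m/2$) then gives the half-plane $\mathrm{Re}\,s>1+\kappa_1$. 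Substituting the three values of $\kappa_1$ produces exactly the bounds $-2-k_2$, $-2-(k_1-k_2)$ and $-3-k_1$.

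For part~(2), I would invoke the dichotomy recalled above: $L(\psi,s)$ for a Hecke character $\psi$ of $\BG_{m,\BQ}$ is entire unless $\psi=\vert\cdot\vert_{\BI_\BQ}^{t}$ for some $t\in\BC$, in which case it has a unique, simple pole at $s=1-t$. So assume $\hcq\cdot\epsilon_{F\vert\BQ}=\vert\cdot\vert_{\BI_\BQ}^{t}$. Comparing weights — the finite-order character $\epsilon_{F\vert\BQ}$ contributes $0$ and $\vert\cdot\vert_{\BI_\BQ}^{t}$ has weight $-2t$ in this normalization — forces $2\kappa_1=-2t$, so $t=-\kappa_1$ and the pole lies at $s=1+\kappa_1$. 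Requiring the pole at $s=-2$ means $\kappa_1=-3$, i.e.\ $t=3$ and $\hcq\cdot\epsilon_{F\vert\BQ}=\vert\cdot\vert_{\BI_\BQ}^{3}$. Imposing $\kappa_1=-3$ case by case gives $k_2=0$ when $w=(1\ 2\ 3)$, $k_1=k_2$ when $w=(1\ 3\ 2)$, and $k_1=-1$ when $w=(1\ 3)$; the last is impossible since $k_1\geq 0$, so no pole occurs for $w=(1\ 3)$. This matches the asserted list, and in the two surviving cases the pole is simple and at $s=-2$.

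The argument is essentially bookkeeping once these inputs are assembled, so I do not expect a real obstacle; the one place care is needed is keeping the weight normalizations coherent across the restriction $\hc\mapsto\hcq$ and the twist by $\epsilon_{F\vert\BQ}$, and in particular checking that $\vert\cdot\vert_{\BI_\BQ}^{t}$ carries weight $-2t$ in the present conventions. This can be verified directly, or cross-checked against Lemma~\ref{condpole1}(2), where $\vert\cdot\vert_{\BI_F}^{2}$ has weight $-4$ and yields a pole at $s=-1$, consistently with $s=1-t$.
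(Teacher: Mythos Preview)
Your argument is correct and follows exactly the route the paper takes: the paper's entire proof is the sentence preceding the lemma, namely that $\hcq$ (and hence $\hcq\cdot\epsilon_{F\vert\BQ}$) has weight $2\,w(\hc)=2\kappa_1$, after which the statement is the bookkeeping you carry out using the table of $w\star\lambda$ and the general analytic facts on Hecke $L$-functions. Your cross-check of the weight convention for $\vert\cdot\vert^{t}$ against Lemma~\ref{condpole1}(2) is a nice way to pin down the normalization.
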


From the previous two lemmas, we immediately get the following corollary.
\begin{coro} \label{coro:nopole}
Let $k$ be a positive integer, put $\lambda=(k,0)$, and let $\phi$ be a Hecke character of $F$ of type $\mbox{\emph{(1\ 2\ 3\ )}} \star \lambda$. Suppose $\phi_{\BQ} \cdot \omega_{F \vert \BQ}( \cdot ) = \vert \cdot \vert_{\BI_\BQ}^3$. 

If $L(\phi,-1) =0$, then $c(\phi,s)$ has no pole at $s=0$.
\end{coro}

\subsection{Automorphic description of Eisenstein cohomology} To describe the main result of this section, we consider a modification of the intertwining operators $T(\phi,s)$ introduced in the previous subsection.
\begin{defi} \label{tloc}\rm
Let $\lambda$ be a character of $\rT^\prime$ and $w$ be an element of the Weyl group $W$ of $G_{F}$ of length $\geq 2$. Let $\phi$ be a Hecke character of $F$ of type $w \star \lambda$. Let $s \in \BC$ belong to the half-plane of convergence of the Euler product of $L(\phi,s)$. For every place $v$, we define 
\[
T^{\loc}(\phi_v,s):=c_v(\phi,s)^{-1} \cdot T(\phi_v,s)\,.
\]
We then put
\[
T^{\loc}(\phi,s):=\bigotimes\limits_v T^{\loc}(\phi_v,s)\,.
\]
\end{defi}
\begin{rmk}\label{tloc0}$\,$\rm
\begin{enumerate}
    \item The definition makes sense because if $s$ satisfies the hypotheses, i.e. if $\mbox{Re} \ s$ satisfies the bounds of Lemma~\ref{condpole1}(1), then both the operator $T(\phi_v,s)$ and the quantity $c_v(\phi,s)$ are indeed defined and non-zero (cfr.~\cite[Lem.~2.3.1]{Har87b}).
\item The operator $T^{\loc}(\phi,s)$ is also defined, and non-zero, for $s=0$.
\end{enumerate}
\end{rmk}

For the remainder of this section, we assume that $\lambda$, $w$, and $\phi$ are as in the preceding definition. 

\begin{defi} \label{Eis}\rm
The operator $\Eis_{\phi} : I_{\phi}^* \rightarrow \mathcal{A}(\G(\BQ) \backslash \G(\BA))$
is defined as 
\begin{enumerate}
\item \label{Eis1} the evaluation $\Eis_{\phi,0}$ at $s=0$ of the Eisenstein operator $\Eis_{\phi,s}$ of~\eqref{Eisop}, if $c(\phi,s)$ does not have a pole at $s=0$;
\item \label{Eis2} the residue at $s=0$ of the operator $s \cdot \Eis_{\phi,s}$, if $c(\phi,s)$ has a pole at $s=0$.
\end{enumerate}
\end{defi}

\begin{defi} \rm $\,$ 
\begin{enumerate}
\item We denote $I^\prime_{\phi}$ the submodule of $I_{\phi}$ generated by those tensors, whose component at $v$ belongs to the kernel of $T^{\loc}(\phi_v,0)$ for at least one finite place $v$. We denote $I^{\prime,*}_{\phi}:=I^\prime_{\phi} \otimes I_{\phi_{\infty}}$. 
\item For any finite place $v$, we denote $\tilde{J}_{\phi_v}:= \mbox{Im} \ T^{\loc}(\phi_v,0)$, and
\begin{equation}\label{J_philoc}
J_{\phi} := \bigotimes_v \tilde{J}_{\phi,v}
\end{equation}
\end{enumerate}
\end{defi}
\begin{rmk}\label{keys} \rm
As explained in~\cite[p. 583]{Har87b}, the results of~\cite{Key84} imply that if $\phi \in \Phi_p$, then the spaces $I^\prime_{\phi}$, resp. $J_{\phi}$ are non-trivial, proper subspaces of the domain, resp. of the codomain of $T^{\loc}(\phi,0)$.
\end{rmk}

\begin{defi} \label{eispoles} \rm
Denote by $\Phi_{\mbox{\tiny{np}}}$ the set of those $\phi$'s such that $c(\phi,s)$ does not have a pole at $s=0$, and by $\Phi_{\mbox{\tiny{p}}}$ the set of those $\phi$'s such that $c(\phi,s)$ has a pole at $s=0$.
\begin{enumerate}
\item For each $\phi \in \Phi_{\mbox{\tiny{np}}}$, we denote by $\Eis_{\phi}$ the operator induced in $(\mathfrak{g},\tilde{K}_{\infty})$-cohomology by the operator $\Eis_{\phi}$ of Definition~\ref{Eis}.(\ref{Eis1}).
\item \label{red} For each $\phi \in \Phi_{\mbox{\tiny{p}}}$, we denote by $\Eis^\prime_{\phi}$ the restriction to $I^{\prime,*}_{\phi}$ of the operator $\Eis_{\phi}$ induced in $(\mathfrak{g},\tilde{K}_{\infty})$-cohomology by the operator $\Eis_{\phi}$ of Definition~\ref{Eis}.(\ref{Eis2}). 
\end{enumerate}
\end{defi}

We are now ready to state the main result of Harder's paper~\cite{Har87b}.
\begin{thm}\cite[Thm.~2]{Har87b}\label{Eiscoh}
Let $r$ be the restriction morphism of~\eqref{restrmap}. 
Then, $r_{\BC} \circ ( \oplus_{\phi \in \Phi_{\mbox{\tiny{np}}}} \Eis_{\phi} \oplus_{\phi \in \Phi_{\mbox{\tiny{p}}}} \Eis^\prime_{\phi}) $ induces an isomorphism 
\begin{equation}
\Img(r) \simeq \bigoplus\limits_{w \in \CW} \bigoplus_{\substack{\mbox{\emph{\tiny{type}}}(\phi)= w \star \lambda \\
\ell(w) \geq 2 \\
\phi \in \Phi_{\mbox{\tiny{np}}}}} I_{\phi} \bigoplus_{\substack{\mbox{\emph{\tiny{type}}}(\phi)= w \star \lambda \\
\ell(w) \geq 2 \\
\phi \in \Phi_{\mbox{\tiny{p}}}}} I^\prime_{\phi} \oplus J_{\phi}
\end{equation}
\end{thm}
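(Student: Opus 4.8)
The statement is Harder's second main theorem from~\cite{Har87b}, transposed to $\G=\GU(2,1)$, and the plan is to follow his argument, combining the automorphic description~\eqref{gKiso} of cohomology with Langlands' spectral decomposition of $\mathcal{A}(\G(\BQ)\backslash\G(\BA))(\omega^{-1})$ and the analysis of the constant term recalled above.

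First I would reduce to Eisenstein classes. The spectral decomposition splits $H^{\bullet}(S(\BC),\MlC)$ into a cuspidal, a residual and a properly-Eisenstein part; the restriction map $r$ of~\eqref{restrmap} annihilates the cuspidal part (the interior cohomology, killed by the long exact sequence~\eqref{longexloc}), so $\Img(r)$ is spanned by the boundary restrictions of the classes coming from cuspidal data on proper $\BQ$-parabolics. Since the only such parabolic is $\B$, whose Levi is a torus, these data are exactly the algebraic Hecke characters $\phi$ on $\tilde{\rT}^{\rM}$; matching infinitesimal characters via Kostant's theorem~\ref{KostThm} then shows that only the $\phi$ of type $w\star\lambda$ for some $w\in\CW$ contribute a non-zero cohomology $H^{\bullet}(\Fg,\K,I_{\phi,\infty}\otimes\MlC)$, and that this cohomology is concentrated in degree $\ell(w)$; a weight and duality argument — as in~\cite{Har87b}, using Proposition~\ref{boundaryweights} and the Poincaré duality on the boundary three-manifold $\partial\overline{\rS}_{\Gamma}$ — confines $\Img(r)$ to degrees $\geq 2$, that is, to $\ell(w)\geq 2$. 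This already produces the index set of the asserted decomposition.

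Next, for each such $\phi$ I would carry out the constant-term analysis. The constant term of $\Eis^{*}_{\phi,s}$ is $\Psi+T(\phi,s)\Psi$, and Lai's formula, through Definition~\ref{tloc}, factors the intertwining operator as $T(\phi,s)=c(\phi,s)\cdot T^{\loc}(\phi,s)$ with $T^{\loc}(\phi,0)$ defined (Remark~\ref{tloc0}); hence $\Eis^{*}_{\phi,s}$ has a pole at $s=0$ exactly when $c(\phi,s)$ does, i.e. exactly for $\phi\in\Phi_{\mathrm{p}}$, the set $\Phi_{\mathrm{p}}$ being pinned down by Lemmas~\ref{condpole1} and~\ref{condpole2}. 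If $\phi\in\Phi_{\mathrm{np}}$ I would evaluate $\Eis^{*}_{\phi}=\Eis^{*}_{\phi,0}$ at $s=0$ (Definition~\ref{Eis}.\ref{Eis1}) and check, via~\eqref{Eiscohop}, that the resulting classes have non-zero boundary restriction in degree $\ell(w)$ and that their $\G(\BA_f)$-span exhausts the induced module $I_{\phi,\BC}$, which gives the summand $I_{\phi,\BC}$. If $\phi\in\Phi_{\mathrm{p}}$, the residual operator $\Eis^{*}_{\phi}$ of Definition~\ref{Eis}.\ref{Eis2} has constant term proportional to $T^{\loc}(\phi,0)\Psi$, so the boundary restrictions of its classes fill out $J_{\phi,\BC}=(\Img T^{\loc}(\phi,0))_{\BC}$; on the submodule $I^{\prime}_{\phi}=\Ker T^{\loc}(\phi,0)$, where the simple zero of $s\mapsto T^{\loc}(\phi,s)\Psi$ at $s=0$ absorbs the simple pole of $c(\phi,s)$, one obtains in addition a holomorphic family whose boundary restriction contributes $I^{\prime}_{\phi,\BC}$ — this is the rôle of the operator $\Eis^{\prime}_{\phi}$ of Notation~\ref{eispoles}.\ref{red} — so that altogether the contribution is $I^{\prime}_{\phi,\BC}\oplus J_{\phi,\BC}$. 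Finally I would assemble: by Theorem~\ref{thm:GH1} the summands attached to distinct Hecke characters $\phi$ are supported on distinct isotypic constituents of $H^{\bullet}(\partial S(\BC),\MlC)$, hence their sum is direct, and by the reduction step every class of $\Img(r)$ arises in this way; therefore $r\circ(\oplus_{\phi\in\Phi_{\mathrm{np}}}\Eis_{\phi}\oplus_{\phi\in\Phi_{\mathrm{p}}}\Eis^{\prime}_{\phi})$ induces the claimed isomorphism.

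The hard part will be two technical inputs, both already available in~\cite{Har87b}: the $(\Fg,\K)$-cohomology computation for the principal series $I_{\phi,\infty}$, which pins down the cohomological degree $\ell(w)$ and supplies the vanishing responsible for the restriction $\ell(w)\geq 2$; and, for $\phi\in\Phi_{\mathrm{p}}$, the delicate bookkeeping showing that the residual operator together with the holomorphic family on $I^{\prime}_{\phi}$ account for exactly $I^{\prime}_{\phi,\BC}\oplus J_{\phi,\BC}$, with neither overcounting nor omission. In the present situation both reduce, through~\eqref{gKiso}, to computations for the rank-one group $\G=\GU(2,1)$.
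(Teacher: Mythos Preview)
Your sketch is essentially correct and faithfully reconstructs the argument the paper cites from~\cite[pp.~581--584]{Har87b}; the paper itself offers no independent proof beyond that reference, so there is nothing further to compare.

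One minor inaccuracy worth correcting: you conflate the condition $\ell(w)\geq 2$ on the \emph{type} of $\phi$ with the claim that $\Img(r)$ lives only in cohomological degrees $\geq 2$. These are different. While the Hecke characters $\phi$ indexing the decomposition do all satisfy $\ell(w)\geq 2$, the residual pieces $J_{\phi,\BC}=\Img T^{\loc}(\phi,0)$ land in the boundary component indexed by $\theta\phi$, whose type is $\theta w\star\lambda$ with $\ell(\theta w)\leq 1$, and they contribute to $\Img(r)$ in degrees $0$ and $1$ --- see Remark~\ref{deg}. The duality/weight argument you invoke selects the \emph{parameters} $\phi$ with $\ell(w)\geq 2$, not the cohomological degrees of the resulting classes. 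Relatedly, your description of $\Eis^{\prime}_{\phi}$ as coming from a ``holomorphic family'' on $I^{\prime}_{\phi}$ is in fact the correct mechanism (on $\Ker T^{\loc}(\phi,0)$ the pole of $c(\phi,s)$ is cancelled and one evaluates at $s=0$), even though the paper's Notation~\ref{eispoles}.\ref{red} phrases it as a restriction of the residual operator.
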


\begin{rmk}$\,$\rm
\begin{enumerate}
\item The case $\phi \in \Phi_{\mbox{\tiny{np}}}$ of the above theorem is proven in~\cite[pp. 581-583]{Har87b}, and the case $\phi \in \Phi_{\mbox{\tiny{p}}}$ in~\cite[pp. 583-584]{Har87b}.
\item \label{deg}
In the above theorem, the spaces $I_{\phi}$ and $I^\prime_{\phi}$ contribute to cohomology in degrees corresponding to the length of the corresponding Weyl element, hence degree 2 (in which case the space $J_{\phi}$ contributes to cohomology in degree 1) or 3 (in which case the space $J_{\phi}$ contributes to cohomology in degree 0).
\item The map in the statement of the above theorem induces an isomorphism between the extensions of scalars to $\BC$ of the spaces under consideration. One then obtains the desired isomorphism by passing to suitable $\overline{\BQ}$-structures.
\end{enumerate}
\end{rmk}

For $i \in \{0, \dots, 3 \}$, we will denote 
\begin{equation} \label{eiscoh}
H^i_{\Eis}(\rS(\BC), V_{\lambda}) := \mbox{Im} (H^i(\rS(\BC), V_{\lambda}) \rightarrow \partial H^i(\rS(\BC), V_{\lambda}))
\end{equation}
where the image is taken with respect to the restriction morphism. We call this space \emph{Eisenstein cohomology}, and its counterpart at finite level is obtained by taking $K$-invariants, 
\[
H^{\bullet}_{\Eis}(\rS_K(\BC), V_{\lambda})=H^{\bullet}_{\Eis}(\rS(\BC), V_{\lambda})^K
\]

With this notation, the first concrete consequence of Theorem~\ref{Eiscoh} that we will use is the following. 

\begin{coro} \label{boundphiHS_sub}
Let $k$ be a positive integer and let $\phi$ be a Hecke character of $F$ of infinity type $(k, -k-3)$. Suppose $\phi_{\BQ} \cdot \omega_{F \vert \BQ}( \cdot ) = \vert \cdot \vert_{\BI_\BQ}^3$. 

If $L(\phi,-1) =0$, then there exists a canonical monomorphism of $\rG(\BA_f)$-modules
\[
I_{\phi} \hookrightarrow H^2_{\Eis}(\rS(\BC), V_{k,\overline{\BQ}})
\]

For any compact open $K \subset \rG(\BA_f)$ which is small enough, it endows $I^K_{\phi}$ with a pure Hodge structure, isomorphic to a direct sum of 1-dimensional pure $\overline{\BQ}$-Hodge structures
\[
I_{k,-k-3} \otimes_F \overline{\BQ}
\]
of type $(1,k+2)$.
\end{coro}

\begin{proof}
Let $\phi$ be as in the hypotheses and observe that by putting $\lambda=(k,0)$, its type can be written as $\mbox{(1\ 2\ 3\ )} \star \lambda$. Then, by Corollary~\ref{coro:nopole}, $c(\phi,s)$ has no pole in $s=0$. Hence, by Theorem~\ref{Eiscoh}, $I_{\phi}$ appears as a submodule of degree 2 Eisenstein cohomology. Once one remembers that by Corollary~\ref{boundHS_Vk}, the pure 1-dimensional $F$-Hodge structure $I_{k,-k-3}$ has Hodge type $(1,k+2)$, the description of the Hodge structure on $I^K_{\phi}$ comes from the isomorphism~\eqref{HodgeInd}. 
\end{proof}

To state the second consequence of Theorem~\ref{Eiscoh} which will be of interest to us, denote, for $i \in \{0, \dots, 3 \}$
\begin{equation} \label{eiscoh_c}
H^i_{c,\Eis}(\rS(\BC), V_{\lambda}) := \mbox{Coker} (H^{i}(\rS(\BC), V_{\lambda}) \rightarrow \partial H^i(\rS(\BC), V_{\lambda}))
\end{equation}
where again the cokernel is taken with respect to the restriction morphism. Moreover, for $\phi$ a Hecke character of $F$ of type $\lambda$, define $\theta \phi$ by
\[
\theta \phi := \theta(\phi \vert \delta \vert) \vert \delta \vert^{-1}
\]

\begin{coro} \label{boundphiHS_sub_dual}
Let $k$ and $\phi$ be as in the Corollary above. Then there exists a canonical monomorphism of $\rG(\BA_f)$-modules
\[
I_{\theta \phi} \hookrightarrow H^1_{c,\Eis}(\rS(\BC), V_{k,\overline{\BQ}})
\]
exhibiting $I_{\theta \phi}$ as a direct summand of the space on the right. 

For any compact open $K \subset \rG(\BA_f)$ that is small enough, it endows $I^K_{\theta \phi}$ with a pure Hodge structure, a direct summand of the one on $H^1_{c,\Eis}(\rS_K(\BC), V_{k,\overline{\BQ}})$.  It is the direct sum of 1-dimensional pure $\overline{\BQ}$-Hodge structures
\[
I_{k+1,-k-2} \otimes_F \overline{\BQ}
\]
of type $(0,k+1)$
\end{coro}

\begin{proof}
Note that if $\phi$ is of infinity type $(k,-k-3)$, that is, of type $\mbox{(1\ 2\ 3\ )} \star (k,0)$, then $\theta \phi$ is of type $\mbox{(1\ 2\ )} \star (k,0)$, that is, of infinity type $(k+1,-k-2)$. Then, the statement is proven by applying Corollary~\ref{boundaut_Vk} and Theorem~\ref{Eiscoh}, reasoning analogously to the proof of Corollary~\ref{boundphiHS_sub}, and noticing that Corollary~\ref{boundaut_Vk} exhibits $I_{\theta \phi}$ as a direct summand of boundary cohomology.
\end{proof}

\begin{notation}
Fix $\phi$ as in the above corollary and $K$ small enough. We will denote the pure Hodge structure having $I^K_{\phi}$ as underlying vector space by
\begin{equation} \label{boundphiHS}
\partial H^2(\phi)
\end{equation}
and the one having $I^K_{\theta \phi}$ as underlying vector space by
\begin{equation} \label{boundphiHS_dual}
\partial H^1(\theta \phi)\,.
\end{equation}
\end{notation}

\begin{rmk} \label{conj_boundcoh} \rm
Suppose $\phi_{\BQ}$ satisfies the same assumptions as in Corollary~\ref{boundphiHS_sub}, but $\phi$ is of infinity type $(-k-3,k)$. Then, using the local system $V^{\vee}_k$ instead of $V_k$, and employing Corollary~\ref{boundHS_Vkdual}, the same proof shows the existence of a sub-Hodge structure $\partial H^2(\phi)$ of degree-2 boundary cohomology, of Hodge type $(k+1,0)$ and of underlying vector space $I^K_{\phi}$.
\end{rmk}


\section{A subspace of interior cohomology}\label{se:intcoh}
The aim of this section is to construct specific sub-Hodge structures $H^2(\phi)_!$ of interior cohomology of Picard surfaces. These subspaces are associated with Hecke characters $\phi$ of the imaginary quadratic field $F$ satisfying the assumptions of our Theorem~\ref{mainthm}. They indeed provide the last ingredient, which will be needed in the next section to prove the theorem. 

\subsection{The setup} We fix a positive integer $k$. We will consider Hecke characters $\phi$ of $F$ of infinity type $(k, -k-3)$. In particular, the weight of such a $\phi$ is $-3$ and the center of its functional equation is $s=-1$. 

For any such $\phi$, we identify it with a Hecke character on the maximal torus $\rT^\prime$ of $\rG^{\der}$ according to the conventions of Subsection~\ref{algHecke}. Moreover, we fix any extension of $\phi$ to a Hecke character of the maximal torus $\rT$ of $\rG$, and denote it by the same symbol; the context will make it clear when we need to see $\phi$ as a Hecke character of $\rT$ instead of just $F$. 

The first hypothesis on our $\phi$'s will be that the restriction $\phi_{\BQ}$ of $\phi$ to $\BQ$ satisfies
\begin{equation}\label{restrcond}
\phi_{\BQ}= \omega_{F \vert \BQ} \vert \cdot \vert^3_{\BI_{\BQ}}.
\end{equation}

\begin{rmk} \label{meaning_restrcond} \rm
Let us discuss the existence of Hecke characters verifying the above assumption. Observe that using the element $\delta$ defined in~\eqref{halfsum}, a Hecke character $\phi$ of $F$ of infinity type $(k,-k-3)$ can be written as 
\begin{equation} \label{unitphi}
\phi=\vert \delta \vert^{\frac{3}{2}} \cdot \phi_u
\end{equation}
where $\phi_u$ is a \emph{unitary} Hecke character. Then we have an equality of $\rG(\BA_f)$-representations 
\begin{equation}\label{indun}
\Ind_{\rB(\BA_f)}^{\rG(\BA_f)} \phi = \IndUn_{\rB(\BA_f)}^{\rG(\BA_f)} \phi_u \cdot \vert \delta \vert^{\frac{1}{2}}
\end{equation}
where $\IndUn$ denotes the \emph{unitary} induction functor.
Moreover, we have 
\begin{align*}
\phi_{u,\infty}(z)=z^{-k-\frac{3}{2}} \overline{z}^{k+\frac{3}{2}} \quad \text{for}\quad  z \in \BC^{\times}.
\end{align*} 

Now, define $\phi^{\perp}$ and $\phi_u^{\perp}$ as in~\eqref{phi_perp}. Then, the proof of~\cite[Lem.~6.9.2]{BC09} shows that: 
\begin{enumerate}
    \item[(a)] condition~\eqref{restrcond} is equivalent to asking that $\phi_u = \phi^{\perp}_u$, hence to asking 
\begin{equation}\label{autoadjoint_3}
\phi^{\perp}=\phi \vert \cdot \vert^{-3}_{\BI_F}, 
\end{equation} 
\item[(b)] such $\phi_u$'s verifying $\phi^{\perp}_u=\phi_u$ do exist.
\end{enumerate}
\end{rmk}

The second condition that we want to enforce on our $\phi$ is that 
\begin{equation}\label{vanishcond}
L(\phi,-1) = 0.
\end{equation}
Lemma~\ref{intert_pole} tells us that under~\eqref{restrcond}, the intertwining operator $T(\phi,s)$ of Definition~\ref{intert} may have a pole at $s=0$, but the same Lemma also implies that when~\eqref{vanishcond} holds, no pole can appear. Hence, under this hypothesis, $T(\phi,0)$ is well defined. 

\begin{defi} \rm
Let $k$ be a positive integer and $\phi$ be a Hecke character of $F$ of infinity type $(k,-k-3)$. If $\phi$ satisfies~\eqref{restrcond} and~\eqref{vanishcond}, then we denote 
\[
J^*_{\phi}:= \mbox{Im}(T(\phi,0))
\]
We denote $J_{\phi,v}$ its component at a finite place $v$, and $J_{\phi,f}$ the restricted product of the $J_{\phi,v}$'s over $v$.
\end{defi}
\begin{rmk}\label{Langlands_quotient}$\,$\rm
\begin{enumerate}
\item 
Let $p$ be a prime that is inert or ramified in $F$. Then $J_{\phi,p}$ is the \emph{Langlands quotient} of $I_{\phi,p}$, that is, its unique irreducible quotient as a $\rG(\BQ_p)$-module. By the results of~\cite{Key84} (as explained in~\cite[Sect.~12.2]{Rog90}), the module $J_{\phi,p}$ is a \emph{proper} quotient of $I_{\phi,p}$ precisely under the hypothesis~\eqref{restrcond}. 
\item
Recall the space $J_{\phi}$ introduced in~\eqref{J_philoc}. By construction, we have $J_{\phi,\BC} \simeq J_{\phi,f}$.
\end{enumerate}
\end{rmk}

Now suppose that $\phi$ satisfies~\eqref{restrcond}, or equivalently~\eqref{autoadjoint_3}. By Remark~\ref{selfadjoint}, if the sign $\epsilon(\phi)$ satisfies
\begin{equation} \label{signcond}
\epsilon(\phi)=-1
\end{equation}
then~\eqref{vanishcond} holds. Now, consider the space $\rS$ introduced in Subsection~\ref{ss:autbound}. Recall that for any local system $V_{\lambda}$, the \emph{cuspidal cohomology} 
\[
H^{\bullet}_{\cusp}(S(\BC), V_{\lambda, \BC})
\]
is the $\rG(\BA_f)$-submodule of interior cohomology $H_!^{\bullet}(S(\BC),  V_{\lambda, \BC})$ (cfr.~\eqref{intcoh}), defined as the direct sum of all those irreducible $\rG(\BA_f)$-submodules of $H^{\bullet}_!(S(\BC), V_{\lambda, \BC})$ which are isomorphic to the non-archimedean component of some cuspidal automorphic representation of $\rG(\BA)$.  If we denote the set of these isomorphism classes by $\mathcal{C}_{\cusp}$, then by multiplicity one for the discrete spectrum of $\rG(\BA)$ (\cite[Thm.~13.3.1]{Rog90}), cuspidal cohomology decomposes as 
\begin{equation} \label{cuspcoh}
H^{\bullet}_{\cusp}(S(\BC), V_{\lambda,\BC}) \simeq \bigoplus\limits_{\substack{\pi=\pi_f \otimes \pi_{\infty} \\
\mbox{\tiny{s. t.}} \ \pi_f \in \mathcal{C}_{\cusp}}} \pi_f \otimes H^{\bullet}(\Fg, \K, \pi_{\infty} \otimes V_{\lambda,\BC}).
\end{equation}
In~\cite[p. 99]{Har93}, the following observation is made. As a consequence of~\cite[Thms.~13.3.6,~13.3.7]{Rog90}, corrected in~\cite[Thm.~1.1]{Rog92b}, it follows that under assumption~\eqref{signcond}, there exists a $\rG(\BA_f)$-submodule
\begin{equation} \label{H2(pi(phi)f)}
H^2(\pi(\phi)_f) \quad \text{of} \quad H^{2}_{\cusp}(S(\BC), V_k),
\end{equation}
 whose local components are isomorphic to $J_{\phi,v}$ for infinitely many places $v$. We explain this in the following subsection. 

\subsection{An application of Rogawski's results} 

Given the Hecke character $\phi$, we are now going to construct the $\rG(\BA_f)$-submodule $H^2(\pi(\phi)_f)$ mentioned in equation~\eqref{H2(pi(phi)f)}. This will amount to constructing a representation of $\rG(\BA)$ which we will show to be cuspidal and cohomological by applying Rogawski's results. 

For this, we fix a choice of a Hecke character $\mu$ of $F$ whose restriction to $\BQ$ is $\omega_{F \vert \BQ}( \cdot )$. Moreover, we choose a couple of characters $(\rho,\rho^\prime)$ on the norm-one subgroup $C_F$ of the id\`ele class group of $F$ as follows: define $\rho_1$ to be the restriction of $\mu \cdot \phi_{u}$ to $C_F$, and $\rho^\prime$ to be trivial. To such a couple, one attaches a character $\rho$ on $\mathrm{U}_2 \times \mathrm{U}_1$ as in~\cite[p. 395]{Rog92b}. 

\begin{defi} \rm
Let $\phi$ be a Hecke character of $F$ satisfying 
\[
\phi_{\BQ}= \omega_{F \vert \BQ} \vert \cdot \vert^3_{\BI_{\BQ}}
\]
For $v$ a finite place of $\BQ$, define the $\rG(\BQ_v)$-representation $\pi^n(\rho_v)$ in the following way:
\begin{enumerate}
    \item is $v$ is inert or ramified in $F$, then we define $\pi^n(\rho_v)$ as the Langlands quotient $J_{\phi,v}$ of $I_{\phi,v}$, (see Remark~\ref{Langlands_quotient});
    \item if $v$ splits into places $w_1$ and $w_2$ in $F$, then identifying $\rG(\BQ_v) \simeq \GL_3(F_{w_1}) \times F^{\times}_{w_1}$, and seeing $\phi_v$ as a character of $(F^{\times}_{w_1})^4$, we define $\pi^n(\rho_v)$ as the representation induced from the standard $(2,1)$ parabolic of $\GL_3(F_{w_1})$ by the character 
\[
\left( \left(
\begin{array}{cc}
M & * \\
 & \lambda
\end{array}
\right), f\right) \mapsto \phi_v(\det(M),1,\lambda,f)
\]
\end{enumerate}
\end{defi}
\begin{rmk} \label{rog_finite} \rm
For any finite place $v$, the unitarily induced representation called $\ind(\eta_v)$ in~\cite[p. 395]{Rog92b} is nothing but our $I_{\phi_v}$, or more precisely, its restriction to a representation of $\rG^{\nu=1}$. This follows by observing that in the notation of~\eqref{indun},  the character called $\phi$ in \emph{loc.~cit.} becomes the character $\phi_u$ defined in~\eqref{unitphi}. This shows that for $v$ inert or ramified, our notation $\pi^n(\rho_v)$ is coherent with Rogawski's one in~\cite[p.~395]{Rog92b}. The same holds, by definition, in the case of split $v$~(\cite[p.~396]{Rog92b}).
\end{rmk}

\begin{defi} \rm
The discrete series representation $\pi(\phi)_{\infty}$ of $\rG(\BR)$ is defined as the representation $\pi^s(\rho_{\infty})$ denoted by $\pi^s(\rho)$ in~\cite[p.~77, first line]{Rog92a}. 
\end{defi}

\begin{rmk} \label{rog_infinite} \rm
Let $\lambda=(k_1,k_2,c,r)$ be a character of $\rT_F$. By~\cite[3.2]{Rog92a}, the representation $\pi^s(\rho)$ is $V_{\lambda}$-cohomological in degree 2 when either $k_2=0$ or $k_1=k_2$. In the first case, it coincides with the discrete series representation denoted by $\pi^+$ in \emph{loc. cit.}; in the second case, it coincides with the one denoted by $\pi^-$. 

To see how this follows from \emph{loc. cit.}, observe that in our notation, the restriction to $\rG^{\nu=1}$ of the representation $V_{\lambda}$ is identified with a triple $(k_1,k_2,c)$, whereas in~\cite[p. 79]{Rog92a} it is identified with a triple $(m,r,n)$. Now, under the change of variables between the two parametrizations, one sees that $k_2=0$ if and only if $r-n=1$, and that $k_1=k_2$ if and only if $m-r=1$.    
\end{rmk}

For the following statement, recall the level $K$ Hecke algebra defined in Remark~\ref{Hecke_algebra}. Moreover, recall that if $\phi$ satisfies~\eqref{restrcond}, or equivalently~\eqref{autoadjoint_3}, then by Remark~\ref{selfadjoint} there is a well-defined sign $\epsilon(\phi)$. 
\begin{prop} \label{rogmod}
Let $k$ be a positive integer and $\phi$ a Hecke character of $F$ of type $(k,-k-3)$, satisfying
\[
\phi_{\BQ}= \omega_{F \vert \BQ} \vert \cdot \vert^3_{\BI_{\BQ}}
\]
and 
\[
\epsilon(\phi)=-1
\]

Let the $\rG(\BA_f)$-representation $\pi_f(\phi)$ and the $\rG(\BA)$-representation $\pi(\phi)$ be defined by 
\begin{align}\label{pi}
\pi(\phi)_f& :=\bigotimes\limits_{v} \pi^n(\rho_v) \,,\nonumber\\
\pi(\phi)& :=\pi(\phi)_f \otimes \pi(\phi)_{\infty}
\end{align}
Then  
\begin{enumerate}[wide, labelwidth=!, labelindent=0pt]
\item The representation $\pi(\phi)$ is automorphic and has multiplicity 1 in the cuspidal spectrum of $\rG$.
\item \label{cohmod} There exist a $\overline{\BQ}$-model of $\pi_f(\phi)$, denoted by the same symbol, a 1-dimensional $\overline{\BQ}$-Hodge structure $H^2(\phi)_!$ of type $(k+2,0)$, and, for any neat open compact $K$ small enough, a submodule under the Hecke algebra of level $K$
\[
H^2(\pi(\phi)_f) \hookrightarrow H^{2}_{!}(\rS_K(\BC), V_k \otimes_F \bar{\BQ})
\]
such that
\begin{equation} \label{innermod}
H^2(\pi(\phi)_f) \simeq \pi(\phi)_f^K \otimes H^2(\phi)_!
\end{equation}
as a Hecke module and as a Hodge structure. 
\end{enumerate}
\end{prop}
\begin{proof}
\begin{enumerate}[wide, labelwidth=!, labelindent=0pt]
\item The automorphic representation $\pi(\phi)$ belongs to the $A$-packet $\Pi(\rho)$ of~\cite[p.~396]{Rog92b}, and by construction, the number $n(\pi)$ of \emph{loc. cit.} of places $v$ where $\pi(\phi)_v$ is isomorphic to $\pi^s(\rho_v)$ is equal to 1 (since this happens only for $v=\infty$). Now write $\phi_{R}$ for the character denoted $\phi$ in \emph{loc. cit.}. By Remark~\ref{rog_finite}, we have $\phi_R=\phi^{(1)}_u$, so that by~\eqref{unitphi}
\[
L\left(\phi_R,\frac{1}{2}\right)=L\left(\phi \cdot \vert \cdot \vert_{\BI_F}^{-\frac{3}{2}},\frac{1}{2}\right)=L\left(\phi,-1\right)
\]
which shows that our hypothesis $\epsilon(\phi)=-1$ is precisely equivalent to $\epsilon(\frac{1}{2},\phi_R)=-1$. By~\cite[Thm. 1.1]{Rog92b}, we obtain that the multiplicity of $\pi(\phi)$ in the discrete spectrum of $\rG$ is 1. By~\cite[Thm. 13.3.6 (a)]{Rog90}, it is actually a representation appearing in the \emph{cuspidal} spectrum.
\item Consider the decomposition~\eqref{cuspcoh} of $V_k$-valued cuspidal cohomology of $\rS$. Upon choosing a $\overline{\BQ}$-model for each submodule $\pi_f$, denoted by the same symbol, we get a $\overline{\BQ}$-model for cuspidal cohomology, described as 
\begin{equation}
H^{\bullet}_{\cusp}(\rS(\BC), V_{k, \overline{\BQ}}) := \bigoplus\limits_{\substack{\pi=\pi_f \otimes \pi_{\infty} \\
\mbox{\tiny{s. t.}} \ \pi_f \in \mathcal{C}_{\cusp}}} \pi_f \otimes \Hom_{\rG(\BA_f)} (\pi_f, H^{\bullet}_{!}(\rS(\BC), V_{k, \overline{\BQ}})) 
\end{equation}
where each space $\Hom_{\rG(\BA_f)} (\pi_f, H^{\bullet}_{!}(\rS(\BC), V_{k, \overline{\BQ}}))$ is endowed with a canonical $\overline{\BQ}$-Hodge structure. 
Now consider the submodule corresponding to $\pi=\pi(\phi)$. By Remark~\ref{rog_infinite}, $\pi(\phi)_{\infty}$ is $V_k$-cohomological in degree 2. By the comparison isomorphism between singular interior cohomology and \'etale interior cohomology (see e.g.~\cite[Sect.~1.7]{BR92}, the dimension of the Hodge structure
\begin{equation} \label{intphiHS}
H^2(\phi)_!:=\Hom_{\rG(\BA_f)} (\pi(\phi)_f, H^{2}_{!}(\rS(\BC), V_{k, \overline{\BQ}}))
\end{equation}
is the same as the dimension of the space $V^2(\pi_f))$ of Case 5 of~\cite[pp.~91-92]{Rog92a}, which according to \emph{loc. cit.} is equal to 1, by our choice of the archimedean component of $\pi(\phi)$. 
We then obtain the desired Hecke submodule. Finally, to get the assertion on the type of $H^2(\phi)_!$, we apply the following two facts: 

\emph{(a)} if $r$ is the weight of the pure Hodge structure on $V_{\lambda}$ (so that, by~\eqref{weightrep}, $r=k$ in our case), then $H^2_!(\rS(\BC), V_{\lambda})$ has weight $r+2$. Thus, its Hodge substructure $H^2(\pi(\phi)_f)$ has the same weight; 

\emph{(b)} the Hodge types of $H^2(\pi(\phi)_f)$ are those $(p,q)$'s appearing in the $(p,q)$-decomposition of the space $H^2(\Fg, \K, \pi(\phi)_{\infty} \otimes V_{k,\BC})$, which is in turn determined by the $(p,q)$-decomposition of the discrete series representation $\pi(\phi)_{\infty}$. Now, by Remark~\ref{rog_infinite} the discrete series representation $\pi(\phi)_{\infty}$ coincides with $\pi^+$, which is holomorphic by~(\cite[Sect.~1.3]{BR92}). 
\end{enumerate}
\end{proof}

\begin{rmk} \label{intphiHS_quot} \rm
Suppose the hypotheses of Proposition~\ref{rogmod} hold. For any small enough level $K$, the simple, level $K$ Hecke module $\pi^K(\phi)_f$ corresponds to a minimal ideal in the level $K$ Hecke algebra. Take an idempotent generator $e_{\pi(\phi)}$ of this ideal. By a standard argument (cfr. for example~\cite[Prop.~5.4]{Wil15}) we see that the Hodge structure $H^2(\phi)_!$ is isomorphic to the image of $H_!^2(\rS_K(\BC), V_k)$ under $e_{\pi(\phi)}$. Thus, we will be able to consider $H^2(\phi)_!$ both as a subobject and as a quotient of the Hodge structure on $H_!^2(\rS_K(\BC), V_k)$. 
\end{rmk}

\begin{rmk} \label{conj_intcoh} \rm
Suppose $\phi_{\BQ}$ satisfies the assumption of Proposition~\ref{rogmod}, but with $\phi$ being of infinity type $(-k-3,k)$ instead of $(k,-k-3)$. The same proof, but employing the local system $V^{\vee}_k$ instead of $V_k$, shows the existence of a Hodge structure $H^2(\phi)_!$ of type $(0,k+2)$ satisfying all the other requirements in the statement. This follows from the computation of the highest weight of $V^{\vee}_k$ of Remark~\ref{conj_highestweight} and from the fact that using the latter local system, by Remark~\ref{rog_infinite}, the component at infinity $\pi(\phi)_{\infty}$ is isomorphic to the discrete representation $\pi^-$, which is \emph{anti}-holomorphic by~(\cite[1.3]{BR92}).
\end{rmk}

\section{Extensions associated to Hecke characters}\label{se:Heckext}   

In this last section, we continue considering Hecke characters $\phi$ of $F$ satisfying the hypotheses of the previous section, which will be recalled below. In particular, their $L$-function is supposed to vanish at the central point, and as a consequence, as discussed in~\ref{BBHecke_expl}, the Bloch-Beilinson conjectures predict the existence of a non-trivial extension of Hodge structures $E_{\phi}$ of geometric origin. 

Our first aim is to put together all the ingredients introduced so far, to construct candidates $E_{\phi}(\Psi)$ for $E_{\phi}$, by means of the cohomology of a local system on a Picard modular surface $\rS_K$. They depend on the choice of a cohomology class $\Psi$ in a suitable subspace of Eisenstein cohomology of $\rS_K$. This will prove Theorem~\ref{mainthm} stated in the introduction.

\subsection{Construction of extensions}

For a positive integer $k$ and a neat level $K \subset \rG(\BA_f)$, we look at the local system $V_k$ over the Picard modular surface $\rS_K$. Consider its interior cohomology, defined in~\eqref{intcoh^K}, and its Eisenstein cohomology, defined in~\eqref{eiscoh}. The extension of scalars to $\overline{\BQ}$ of the long exact sequence~\eqref{longexloc}
gives rise to a short exact sequence 
\begin{equation} \label{boundexseq}
 0 \rightarrow H_!^{2}(\rS_K(\BC), V_{k,\overline{\BQ}}) \rightarrow H^{2}(\rS_K(\BC), V_{k,\overline{\BQ}}) \rightarrow H_{\Eis}^{2}(\rS_K(\BC), V_{k,\overline{\BQ}}) \rightarrow 0
\end{equation}

To state our main theorem, recall that to a Hecke character $\phi$ satisfying the hypotheses formulated below, we have attached pure Hodge structures $\partial H^2(\phi)$ and $H^2(\phi)_!$, mentioned respectively in~\eqref{boundphiHS} and~\eqref{intphiHS}.

\begin{thm}\label{sourcext1}
Let $k$ be a positive integer and $\phi$ a Hecke character of $F$ of type $(k,-k-3)$, satisfying
\[
\phi_{\BQ}= \omega_{F \vert \BQ} \vert \cdot \vert^3_{\BI_{\BQ}}
\]
and 
\[
\epsilon(\phi)=-1
\]
\begin{enumerate}[wide, labelwidth=!, labelindent=0pt]
\item For any level $K$ small enough, the exact sequence~\eqref{boundexseq}
has a subquotient short exact sequence of Hodge structures 
\begin{equation} \label{boundseq}
0 \rightarrow H^2(\phi)_! \rightarrow \tilde{E}_0 \rightarrow \partial H^2(\phi) \rightarrow 0
\end{equation}

\item Let $m(\phi,K)$ be the dimension of $I^K_{\phi}$. Then the short exact sequence~\eqref{boundseq} induces an extension of $\overline{\BQ}$-Hodge structures
\begin{equation}\label{sourcext}
0 \rightarrow H_{\phi}(-1) \rightarrow E_0 \rightarrow \one^{\oplus m(\phi,K)} \rightarrow 0
\end{equation}
where $H_{\phi}$ is the 1-dimensional $\overline{\BQ}$-Hodge structure of type $(k,-k-3)$ attached to $\phi$. 
\item \label{ext_mainthm} Any element $\Psi$ of $I_{\phi}^K$ provides an extension $E_{\phi}(\Psi)$ of geometric origin of $\one$ by $H_{\phi}(-1)$ in the category $\MHS_{\overline{\BQ}}$. 
\end{enumerate}
\end{thm}
\begin{proof}
\begin{enumerate}[wide, labelwidth=!, labelindent=0pt]
\item We know that the hypothesis on $\phi_{\BQ}$ is equivalent to asking that $\phi$ verifies~\eqref{autoadjoint_3}. By Remark~\ref{selfadjoint}, there is a well-defined sign $\epsilon(\phi)$, and $\epsilon(\phi)=-1$ implies $L(\phi,-1)=0$. Thus, by Corollary~\ref{boundphiHS_sub}, the pure Hodge structure $\partial H^2(\phi)$ is a sub-Hodge structure of $H^2_{\Eis}(\rS_K(\BC), V_k)$. By Proposition~\ref{rogmod} and Remark~\ref{intphiHS_quot}, the pure Hodge structure $H^2(\phi)_!$ is a quotient Hodge structure of $H^2_!(\rS_K(\BC), V_k)$. Hence, the desired extension is obtained from the short exact sequence~\eqref{boundexseq} by taking pullback via the inclusion $\partial H^2(\phi) \hookrightarrow H^2_{\Eis}(\rS_K(\BC), V_k)$ and pushout via the projection $H^2_!(\rS_K(\BC), V_k) \twoheadrightarrow H^2(\phi)_!$. 
\item By Corollary~\ref{boundphiHS_sub}, the pure Hodge structure $\partial H^2(\phi)$ is a direct sum of copies of the 1-dimensional $\overline{\BQ}$-Hodge structure $I_{k,-k-3,\overline{\BQ}}$. By tensoring the extension of the previous point by the dual of $I_{k,-k-3,\overline{\BQ}}$, we get an extension 
\[
0 \rightarrow H^2(\phi)_! \otimes I^{\vee}_{k,-k-3,\overline{\BQ}} \rightarrow E_0 \rightarrow \one^{\oplus m(\phi,K)} \rightarrow 0
\]
Since the type of $H^2(\phi)_!$ is $(k+2,0)$ and the type of $I^{\vee}_{k,-k-3,\overline{\BQ}}$ is $(1,k+2)$, the type of the 1-dimensional $\overline{\BQ}$-Hodge structure $H^2(\phi)_! \otimes I^{\vee}_{k,-k-3,\overline{\BQ}}$ is 
$(k+1,-k-2)$. This tells us that as a $\overline{\BQ}$-Hodge structure, $H^2(\phi)_! \otimes I^{\vee}_{k,-k-3,\overline{\BQ}}$ is indeed isomorphic to $H_{\phi}(-1)$.
\item Choosing an element $\Psi$ of $I_{\phi}^K$ amounts to defining a morphism of Hodge structures $\one \rightarrow \one^{m(\phi,K)}$. By pullback of the extension~\eqref{sourcext} along this morphism, we get the desired extension $E_{\phi}(\Psi)$. To see that it is of geometric origin, denote by $M(V_k)$ the motive $e_{\lambda} M(A^N)$ obtained as in~\eqref{motive_lambda} when choosing the representation $V_{\lambda}$ to be equal to $V_k$. It has Hodge realization $R_H(M(V_k))=H^{\bullet}(\rS_K,V_k)$. Then, by construction, $E_{\phi}(\Psi)$ is a subquotient Hodge structure of the mixed Hodge structure $R_H(M(V_k))$, hence it is of geometric origin.
\end{enumerate}
\end{proof}

\begin{coro} \label{coroext}
Let $k$ and $\phi$ be as in Theorem~\ref{sourcext1}. Associating the extension $E_{\phi}(\Psi)$ to $\Psi \in I_{\phi}^K$ defines a morphism
\begin{equation} \label{extmor}
I^K_{\phi} \rightarrow \Ext^1_{\MHS_{\overline{\BQ}}}(\one, H_{\phi}(-1))
\end{equation}
\end{coro}

Using Remarks~\ref{conj_boundcoh} and~\ref{conj_intcoh}, one can run the same proof as the one of Theorem~\ref{sourcext1}, but employing $V^{\vee}_k$ instead of $V_k$, and deduce the following variant of point~(3) of the above Theorem.

\begin{coro} \label{conj_coroext}
Let $k$ and $\phi_{\BQ}$ be as in Theorem~\ref{sourcext1}, but suppose that $\phi$ is of infinity type $(-k-3,k)$. The choice of an element $\Psi$ of $I_{\phi}^K$ provides an extension $E_{\phi}(\Psi)$ of $\one$ by $H_{\phi}(-1)$ in the category $\MHS_{\overline{\BQ}}$. This defines a morphism
\begin{equation} \label{extmor_conj}
I^K_{\phi} \rightarrow \Ext^1_{\MHS_{\overline{\BQ}}}(\one, H_{\phi}(-1))
\end{equation}
such that all extensions in its image are of geometric origin.
\end{coro}

For the needs of the next subsection, we also need to construct a \emph{dual} extension in the following way. The extension of scalars to $\overline{\BQ}$ of the long exact sequence~\eqref{longexloc} gives rise to a short exact sequence 
\begin{equation} \label{boundexseq_d}
\partial H^{1}(\rS_K(\BC), V_{k,\overline{\BQ}}) \rightarrow H_c^{2}(\rS_K(\BC),V_{k,\overline{\BQ}}) \rightarrow H^{2}(\rS_K(\BC), V_{k,\overline{\BQ}})  
\end{equation}
Recall the pure Hodge structure $\partial H^1(\theta \phi)$ defined in~\eqref{boundphiHS_dual}.
It is a direct summand of $H^1_{\Eis,c}(\rS_K(\BC), V_{k,\overline{\BQ}})$, hence a quotient Hodge structure of the kernel of the second arrow in the above exact sequence.  A reasoning analogous to the proof of Theorem~\ref{sourcext1}, but replacing the use of Corollary~\ref{boundphiHS_sub} by the use of Corollary~\ref{boundphiHS_sub_dual}, and pullback by pushout via the epimorphism onto $\partial H^1(\theta \phi)$, shows the following. 

\begin{prop}\label{sourcext2}
Let $k$ and $\phi$ be as in Theorem~\ref{sourcext1}.
\begin{enumerate}[wide, labelwidth=!, labelindent=0pt]
\item For any level $K$ small enough, the short exact sequence~\eqref{boundexseq_d} has a subquotient short exact sequence of Hodge structures 
\begin{equation} \label{boundseq_d}
0 \rightarrow \partial H^1(\theta \phi) \rightarrow \tilde{E}_0^\prime \rightarrow  H^2(\phi)_! \rightarrow 0
\end{equation}

\item Let $m^\prime(\phi,K)$ be the dimension of $I^K_{\theta \phi}$. Then the short exact sequence~\eqref{boundseq_d} induces an extension of $\overline{\BQ}$-Hodge structures
\begin{equation}\label{sourcext_dual}
0 \rightarrow \one(1)^{\oplus m^\prime(\phi,K)} \rightarrow E_0^\prime \rightarrow H_{\phi}(-1) \rightarrow 0
\end{equation}
where $H_{\phi}$ is the 1-dimensional $\overline{\BQ}$-Hodge structure of type $(k,-(k+3))$ attached to $\phi$. 
\end{enumerate}
\end{prop}

\begin{rmk} \rm
Poincaré-Verdier duality provides a canonical isomorphism between boundary cohomology in degree 2 and the dual of boundary cohomology in degree 1. The above extension could have been obtained by applying such a duality to the extension of Theorem~\ref{sourcext1}.    
\end{rmk}

\begin{coro} \label{coroext_dual}
The choice of an element $\Psi^\prime$ of $I_{\theta \phi}^K$ provides, by pushout of the extension~\eqref{sourcext_dual}, an extension $E_{\phi}(\Psi^\prime)$ of $H_{\phi}(-1)$ by $\one(1)$ in the category $\MHS_{\overline{\BQ}}$. This defines a morphism
\begin{equation} \label{extmor_dual}
I^K_{\theta \phi} \rightarrow \Ext^1_{\MHS_{\overline{\BQ}}}(H_{\phi}(-1),\one)
\end{equation}
such that all extensions in its image are of geometric origin. 
\end{coro}

\section{Biextensions associated to Hecke characters}\label{biext}
Consider the extensions of geometric origin $E_{\phi}(\Psi)$ constructed in the previous section. To verify the Bloch-Beilinson conjectures for our Hecke characters $\phi$, one has to show that there exists an element $\Psi$ of $I^K_{\phi}$ such that the extension $E_{\phi}(\Psi)$ is non-trivial. This is the subject of work in progress by the authors, where, as discussed in the introduction, verifying the desired non-triviality will rest crucially on the non-vanishing of a certain \emph{biextension height} (\cite{Hai90}, \cite{BdJS23}) associated to $E_{\phi}(\Psi)$. 

The aim of this section is then to define the input for such a height, that is, a \emph{biextension} in which $E_{\phi}(\Psi)$ sits. First, we recall what this means in our setting (cfr. for example~\cite{BdJS23}, Def. 2.1).  
\begin{defi}
Let $H$ be a pure $\overline{\BQ}$-Hodge structure of weight $-1$. A \emph{biextension} of $H$ is a $\overline{\BQ}$-mixed Hodge structure $B$ together with an identification of its associated weight-graded semisimple Hodge structure
\[
\Gr^W B \simeq \one (1) \oplus H \oplus \one\,.
\]
\end{defi}

Now, we continue to look at the local system $V_k$ on the Picard modular surface $\rS_K$. As in Subsection~\ref{boundhodge}, we consider the Baily-Borel compactification $\rS^*_K$ together with the open immersion $j$ of $\rS_K$ and the closed immersion $i$ of the boundary $\partial \rS_K$. To simplify notation, we will denote the latter by $\partial$. In the following, we make free use of the six functor formalism for mixed Hodge modules mentioned in Subsection~\ref{subsec_hodge}. We will work in the derived category of mixed Hodge modules, with its canonical triangulated structure; all functors will be implicitly derived (e.g., we denote $Rj_*$ simply by $j_*$), and we will not use different symbols for cohomology and hypercohomology. 

We fix a decomposition of $\partial$ as the disjoint union of two sets of connected components 
\begin{equation} \label{sepbound}
\partial = \Theta \bigsqcup \Sigma\,.
\end{equation}

For any $\triangle \in \{\Theta, \Sigma \}$,  consider the pair of complementary, open respectively closed immersions 
\[
j^{\triangle} : \rS_K(\BC) \hookrightarrow \rS_K(\BC) \bigsqcup \ \triangle \hookleftarrow \ \triangle : i^{\triangle}
\]
and the open immersion
\[
j_{\triangle} : \rS_K(\BC) \bigsqcup \ \triangle \hookrightarrow \rS^*_K(\BC) 
\]
and define $i_{\triangle}:= j_{\triangle} \circ i^{\triangle}$. There are canonical exact triangles 
\[
j_{\triangle,*} j^{\triangle}_! \rightarrow j_* \rightarrow i_{\triangle,*} i^{\triangle,*} j^{\triangle}_* \rightarrow \,
\]

For any local system $V_{\lambda}$ on $\rS_K$, denote 
\begin{align*}
& H^{i}_{c,\partial \setminus \triangle}(\rS_K(\BC), V_{\lambda}):=H^i(\rS_K^*(\BC), j_{\triangle,*} j^{\triangle}_! V_{\lambda}),\\
& H^{i}_{\triangle}(\partial \rS_K(\BC), V_{\lambda}):=H^i( \rS^*_K(\BC), i_{\triangle,*} i^{\triangle,*} j^{\triangle}_*  V_{\lambda}).
\end{align*}
We get a long exact sequence 
\begin{equation} \label{boundlongsupp}
\cdots \rightarrow H^{i}_{c,\partial \setminus \triangle}(\rS_K(\BC), V_{\lambda}) \rightarrow H^{i}(\rS_K(\BC), V_{\lambda}) \rightarrow H^{i}_{\triangle}(\partial \rS_K(\BC), V_{\lambda}) \rightarrow \cdots     
\end{equation}    
Moreover, there are canonical morphisms
\begin{equation} \label{restrbound}
i_*i^*j_* \xrightarrow{r_{\triangle}} i_{\triangle,*} i^{\triangle,*} j^{\triangle}_*
\end{equation}
such that the diagram 
\begin{center}
\begin{tabular}{c}
\xymatrix{
j_* \ar[dd] \ar[dr] & \\
& i_*i^*j_* \ar_{r_{\triangle}}[dl] \\
i_{\triangle,*} i^{\triangle,*} j^{\triangle}_*
}
\end{tabular}
\end{center}
commutes.

\begin{defi}
We define the subspaces 
\[
I^K_{\phi,\Theta} \leq I_{\phi}^K, \ I^K_{\theta \phi,\Sigma} \leq I_{\theta \phi}^K 
\]
as the ones whose elements are supported on $\Theta$, resp. on $\Sigma$.

They acquire Hodge structures 
\[
\partial H_{\Theta}^2(\phi), \  \mbox{resp.} \ \partial H_{\Sigma}^2(\theta \phi)
\]
by restriction of those defined in~\eqref{boundphiHS}, respectively~\eqref{boundphiHS_dual}.
\end{defi}

\begin{prop}\label{3stepdiag}
Let $\phi$ be a Hecke character of $F$ as in Theorem~\ref{sourcext1}. Fix a decomposition 
\[
\partial = \Theta \bigsqcup \Sigma\,.
\]
as in~\eqref{sepbound}. 
There exist extensions~\eqref{boundseq} and~\eqref{boundseq_d}  
\begin{equation} \label{boundseq_supp}
0 \rightarrow H^2(\phi)_! \rightarrow \tilde{E} \rightarrow \partial H_{\Theta}^2(\phi) \rightarrow 0\,,
\end{equation}
\begin{equation} \label{boundseq_supp_d}
0 \rightarrow \partial H_{\Sigma}^1(\theta \phi) \rightarrow \tilde{E}^\prime \rightarrow  H^2(\phi)_! \rightarrow 0
\end{equation}
and a canonical mixed $\overline{\BQ}$-Hodge structure $\tilde{E}^{\sharp}$ fitting in a commutative diagram of mixed $\overline{\BQ}$-Hodge structures, with exact rows and columns
\begin{center}
\begin{tabular}{c}
\xymatrix{
& 0 \ar[d] & 0 \ar[d] & \\
 & \partial H_{\Sigma}^1(\theta \phi) \ar@{=}[r] \ar[d] & \partial H_{\Sigma}^1(\theta \phi) \ar[d] & \\
0 \ar[r] & \tilde{E}^{\prime} \ar[d] \ar[r] & \tilde{E}^{\sharp} \ar[d] \ar[r] & \partial H_{\Theta}^2(\phi) \ar@{=}[d] \ar[r] & 0 \\
0 \ar[r] & H^2(\phi)_! \ar[r] \ar[d] & \tilde{E} \ar[r] \ar[d] & \partial H_{\Theta}^2(\phi) \ar[r] & 0 \\
& 0 & 0
}
\end{tabular}
\end{center}
\end{prop}
\begin{proof}
Consider the exact sequence 
\begin{equation} \label{boundlongsuppex}
H^{1}_{\Sigma}(\partial \rS_K(\BC), V_{k,\overline{\BQ}}) \rightarrow H^{2}_{c,\Theta}(\rS_K(\BC), V_{k,\overline{\BQ}}) \rightarrow H^{2}(\rS_K(\BC), V_k) \rightarrow H^{2}_{\Sigma}(\partial \rS_K(\BC), V_{k,\overline{\BQ}})
\end{equation}
obtained from the extension of scalars of the long exact sequence~\eqref{boundlongsupp}. We have inclusions 
\[
I^K_{\phi,\Theta} \hookrightarrow H^{2}_{\Theta}(\partial \rS_K(\BC), V_{k,\overline{\BQ}}) , \ \mbox{resp.} \ I^K_{\theta \phi,\Sigma} \hookrightarrow H^{1}_{\Sigma}(\partial \rS_K(\BC), V_{k,\overline{\BQ}})
\]
which factor as the composition of
\[
I_{\phi,\Theta}^K \hookrightarrow H^{2}(\partial \rS_K(\BC), V_{k,\overline{\BQ}}), \ \mbox{resp.} \ I_{\theta \phi, \Sigma}^K \hookrightarrow H^{1}(\partial \rS_K(\BC), V_{k,\overline{\BQ}})
\]
and of the morphisms induced by~\eqref{restrbound}. By semi-simplicity of $H^{1}_{\Sigma}(\partial S_K(\BC), V_{k,\overline{\BQ}})$), there exists a projection 
\[
H^{1}_{\Sigma}(\partial \rS_K(\BC), V_{k,\overline{\BQ}}) \twoheadrightarrow I^K_{\theta \phi,\Sigma}\,.
\]
Taking pushout along the latter, the sequence~\eqref{boundlongsuppex} yields an exact sequence 
\[
0 \rightarrow \partial H_{\Sigma}^1(\theta \phi) \rightarrow \tilde{\tilde{E}}_{\Sigma} \rightarrow H^{2}(\rS_K(\BC), V_{k,\overline{\BQ}})\,.
\]
Denoting by $\tilde{\tilde{E}}$ the pullback of 
\[
H^{2}(\rS_K(\BC), V_{k,\overline{\BQ}}) \rightarrow H^{2}(\partial \rS_K(\BC), V_{k,\overline{\BQ}})
\]
along 
\[
I^K_{\phi,\Theta} \hookrightarrow H^{2}(\partial \rS_K(\BC), V_{k,\overline{\BQ}})
\]
we obtain, by pullback via $\tilde{\tilde{E}} \rightarrow H^{2}(\rS_K(\BC), V_{k,\overline{\BQ}})$, an extension
\begin{equation}\label{penext}
0 \rightarrow \partial H_{\Sigma}^1(\theta \phi) \rightarrow \tilde{E}_{\Sigma} \rightarrow \tilde{\tilde{E}} \rightarrow 0\,.
\end{equation}
An analogous pullback applied to the extension~\eqref{boundseq} yields an extension $\tilde{E}$ as in~\eqref{boundseq_supp}. Pullback of~\eqref{penext} along the induced morphism
\[
\tilde{E} \rightarrow \tilde{\tilde{E}}
\]
provides an extension
\[
0 \rightarrow \partial H_{\Sigma}^1(\theta \phi) \rightarrow \tilde{E}^{\sharp} \rightarrow \tilde{E} \rightarrow 0\,.
\]
By construction, this extension fits in the diagram given in the statement, as its right four-term column.   
\end{proof}

\begin{coro}\label{3stepdiag_hodge}
Let $\phi$ be a Hecke character of $F$ as in Theorem~\ref{sourcext1}. Fix a decomposition
\[
\partial = \Theta \bigsqcup \Sigma
\]
as in~\eqref{sepbound} and let $m(\phi,K,\Theta)$ be the dimension of $I_{\phi,\Theta}^K$ and $m^\prime(\phi,K,\Sigma)$ the dimension of $I_{\theta \phi,\Sigma}^K$. There exists a canonical diagram of mixed $\overline{\BQ}$-Hodge structures
\begin{center}
\begin{tabular}{c}
\xymatrix{
& 0 \ar[d] & 0 \ar[d] & \\
 & \one(1)^{\oplus m^\prime(\phi,K,\Sigma)} \ar@{=}[r] \ar[d] & \one(1)^{\oplus m^\prime(\phi,K,\Sigma)} \ar[d] & \\
0 \ar[r] & E^{\prime} \ar[d] \ar[r] & E^{\sharp} \ar[d] \ar[r] & \one^{\oplus m(\phi,K,\Theta)} \ar@{=}[d] \ar[r] & 0 \\
0 \ar[r] & H_{\phi}(-1) \ar[r] \ar[d] & E \ar[r] \ar[d] & \one^{\oplus m(\phi,K,\Theta)} \ar[r] & 0 \\
& 0 & 0
}
\end{tabular}
\end{center}
\end{coro}
\begin{proof}
By Corollary~\ref{boundphiHS_sub}, the pure $\overline{\BQ}$-Hodge structure $\partial H_{\Theta}^2(\phi)$ is a direct sum of copies of the 1-dimensional $\overline{\BQ}$-Hodge structure $I_{k,-k-3,\overline{\BQ}}$. By tensoring the diagram of Proposition~\ref{3stepdiag} by the dual of $I_{k,-k-3,\overline{\BQ}}$, we obtain the statement.
\end{proof}

By a similar reasoning to the one leading to Corollaries~\ref{coroext} and~\ref{coroext_dual}, we obtain the following.
\begin{coro}
Let $\phi$ be a Hecke character of $F$ as in Theorem~\ref{sourcext1}. For any decomposition
\[
\partial = \Theta \bigsqcup \Sigma\,
\]
as in~\eqref{sepbound} and any 
\[
(\Psi, \Psi^\prime) \in I_{\phi,\Theta}^K \times I_{\theta \phi,\Sigma}^K
\]
the diagram of Corollary~\ref{3stepdiag_hodge} induces a biextension of geometric origin
\begin{equation}
B_{\phi}(\Psi, \Psi^\prime)
\end{equation}
of $H_{\phi}(-1)$.
\end{coro}

\section*{Acknowledgements}
The authors would like to thank the following institutions for their hospitality and excellent working environments: Georg-August-Universit\"at G\"ottingen, Technische Universit\"at Dresden, Mathematisches Forschungsinstitut Oberwolfach (MFO), Max Planck Institute for Mathematics in Bonn, Institut de Recherche Math\'ematique Avanc\'ee (IRMA) in Strasbourg, Laboratoire de Math\'ematiques d'Orsay (LMO), Institut de Math\'ematiques de Bourgogne (IMB) in Dijon, and Institut des Hautes \'Etudes Scientifiques (IHES) in Paris. Much of the discussion and work on this project was carried out at these institutions.

The authors also thank G. Ancona, L. Clozel, D. Frățilă, J. Fresán, V. Hernandez and S.-W. Zhang for their interest in this work and for useful comments and discussions. Very special thanks go to G\"unter Harder for sharing his vision and his ideas, for constant encouragement and for several discussions on the subject during the writing of this article. We dedicate this paper to his memory.

\section*{Conflict of interest statement} On behalf of all authors, the corresponding author states that there is no conflict of interest.

\nocite{}
\bibliographystyle{abbrv}
\bibliography{BC}
\end{document}